\newtheorem{theorem}{Theorem}[section]
\newtheorem{lemma}[theorem]{Lemma}
\theoremstyle{definition}
\newtheorem{definition}[theorem]{Definition}
\newtheorem{example}[theorem]{Example}
\theoremstyle{remark}
\newtheorem{remark}[theorem]{Remark}
\numberwithin{equation}{section}
\theoremstyle{plain}
\newtheorem{corollary}{Corollary}
\begin{document}
\title[Minimization Solutions to Conservation Laws]{Minimization Solutions to Conservation Laws With Non-smooth and Non-strictly
Convex Flux}
\author{Carey Caginalp}
\email{carey\_caginalp@brown.edu}
\urladdr{http://www.pitt.edu/\symbol{126}careycag/}
\date{\today}
\keywords{Conservation Laws, Hopf-Lax, Lax-Oleinik, Shocks, Variational Problems in
Differential Equations, Minimization}

\begin{abstract}
Conservation laws are usually studied in the context of sufficient regularity
conditions imposed on the flux function, usually $C^{2}$ and uniform
convexity. Some results are proven with the aid of variational methods and a
unique minimizer such as Hopf-Lax and Lax-Oleinik. We show that many of these
classical results can be extended to a flux function that is not necessarily
smooth or uniformly or strictly convex. Although uniqueness a.e. of the
minimizer will generally no longer hold, by considering the greatest (or
supremum, where applicable) of all possible minimizers, we can successfully
extend the results. One specific nonlinear case is that of a piecewise linear
flux function, for which we prove existence and uniqueness results. We also
approximate it by a smoothed, superlinearized version parameterized by
$\varepsilon$ and consider the characterization of the minimizers for the
smooth version and limiting behavior as $\varepsilon\downarrow0$ to that of
the sharp, polygonal problem. In proving a key result for the solution in
terms of the value of the initial condition, we provide a stepping stone to
analyzing the system under stochastic processes, which will be explored
further in a future paper.

\end{abstract}
\maketitle

\section{Introduction}

Conservation laws, generally expressed in the form%
\begin{align}
w_{t}+\left(  \mathcal{H}\left(  w\right)  \right)  _{x}  &  =0\text{ in
}\mathbb{R\times}\text{ }\left(  0,\infty\right) \nonumber\\
w\left(  x,0\right)   &  =g^{\prime}\left(  x\right)  \text{ on }%
\mathbb{R}\times\left\{  t=0\right\}  \label{cl}%
\end{align}
and the related Hamilton-Jacobi problem%
\begin{align}
u_{t}+\mathcal{H}\left(  u_{x}\right)   &  =0\text{ in }\mathbb{R\times}\text{
}\left(  0,\infty\right) \nonumber\\
u\left(  x,0\right)   &  =g\left(  x\right)  \text{ on }\mathbb{R}%
\times\left\{  t=0\right\}
\end{align}
for a smooth flux function $\mathcal{H}$ have a wide range of applications,
including modelling shocks mathematical turbulence, and kinetic theory
\cite{BG, CD, CEL, ERS, EV, FM, GR, HP, KR, LA1, LA2, M, MP, MS}. In Section
2, we review some background, based on \cite{EV}, regarding well-established
classical results for the conservation law (\ref{cl}) in the case of a flux
with sufficient regularity conditions. We also show that these results can be
extended in several ways, allowing new, broader application for much of this
well-established theory. For example, we are able to prove several results in
\cite{EV} with the much weaker condition of non-strict convexity assumed on
the flux function rather than uniform convexity.

In addition to relaxing some of the convexity and regularity assumptions, we
consider the specific case of a polygonal flux, a (non-strictly) convex
sequence of piecewise linear segments. It has been studied extensively as a
method of approximation and building up to the smooth case in Dafermos
\cite{D1, D2}. This choice of flux function notably eliminates several of the
assumptions of the usual problem under consideration in that it is (i) not
smooth, (ii) not strictly\ convex, and (iii) not superlinear. The Legendre
transform is also not finite on the entire real line. We consider some of the
results for smooth $\mathcal{H}$ and their possible extension to this case.
Later in this analysis, it will be key to consider a smooth, superlinear
approximation to $H$. We index this approximation by two parameters $\delta$
and $\varepsilon$, corresponding to smoothing and superlinearizing the flux
function, respectively, and denote it by $\mathcal{H}_{\varepsilon,\delta}$.
In Section 3, we prove an existence result for the sharp, polygonal problem,
in addition to several other results, without the properties of being
uniformly convex or superlinear. We also consider the two different types of
minimizers for the sharp problem, both at a vertex of the Legendre transform
$L$ or at a part of $L$ where it is locally differentiable, and demonstrate
how (\ref{introkr}) will hold in various cases with these different species of minimizers.

For the smooth problem, it is well-known (i.e. \cite{EV}) that the minimizer
obtain in closed-form solutions such as Hopf-Lax are unique a.e in $x$ for a
given time $t$. Far more intricate behavior surfaces when one takes a less
smooth flux function, as in our case with a piecewise linear flux. In
particular, the convexity here is no longer uniform and not even strict. As a
result, one can have not only multiple minimizers, but an infinite set of such
points. This involves in-depth analysis of the structure of the minimizers
used in methods such as Hopf-Lax or Lax-Oleinik. In Section 4, by considering
the greatest of these minimizers $y^{\ast}\left(  x,t\right)  $, or its
supremum if not attained, we show that $y^{\ast}\left(  x,t\right)  $ is in
fact increasing in $x$. Further, by carefully considering the relative changes
in this infinite and possibly uncountable number of minima, we rigorously
prove the identity%
\begin{equation}
w\left(  x,t\right)  =g^{\prime}\left(  y^{\ast}\left(  x,t\right)  \right)  .
\label{introkr}%
\end{equation}
This expression relates the solution of the conservation law to the value of
the initial condition evaluated at the point of the minimizer. This is a new
result even under classical conditions and requires a deeper examination of
multiple minimizers in the absence of uniform convexity. We also prove other
results including that the solution is of bounded variation \cite{VO} under
the appropriate assumptions on the initial conditions.

In Section 5, we consider the smoothed and superlinearized flux function
$\mathcal{H}_{\varepsilon,\delta}$. By condensing these two parameters into
one and considering the minimizers of the smooth version, we obtain results
relating to the convergence of these solutions of the smooth flux equation to
the polygonal case.

We can also define a particular kind of uniqueness when constructing the
solution from a certain limit, as we take the aforementioned parameter
$\varepsilon\downarrow0$. We show two uniqueness results, the former using the
smoothing approach of Section 5 and the latter showing that one has uniqueness
under Lipschitz continuity of the initial condition $g^{\prime}\left(
x\right)  $. These results are elaborated on in Section 6.

In Section 7, we consider discontinuous initial conditions. When $H$ is
polygonal and $g^{\prime}$ is piecewise constant with values that match the
break points of $H$, the conservation law becomes a discrete combinatorial
problem. We prove that (\ref{introkr}) is valid, and $w$ can also be obtained
as a limit of solutions to the smoothed problem. This provides a link between
the discrete and continuum conservation laws.

A further application of conservation laws includes the addition of
randomness, such as that in the initial conditions. In doing computations and
analysis relating to these stochastic processes, the identity (\ref{introkr})
will be a key building block. We present some immediate conclusions in Section
8. For example, when applied to Brownian motion, we show that the variance is
the greatest minimizer $y^{\ast}\left(  x,t\right)  $ and increases with $x$
for each $t$. In a second paper, we plan to develop these ideas further.

\section{Classical and New Results For Smooth Flux Functions}

We review briefly the basic theory (see \cite{EV}), and obtain an expression
that will be more useful than the standard results when we relax the
assumptions in order to incorporate polygonal flux. For now we assume that the
flux function $\mathcal{H}\left(  q\right)  :\mathbb{R\rightarrow R}$ is
uniformly convex, continuously differentiable, and superlinear, i.e.,
$\lim_{\left\vert q\right\vert \rightarrow\infty}\mathcal{H}\left(  q\right)
/\left\vert q\right\vert =\infty.$ The Legendre transformation is defined by%
\begin{equation}
\mathcal{L}\left(  p\right)  :=\sup_{q\in\mathbb{R}}\left\{  pq-\mathcal{H}%
\left(  q\right)  \right\}  .
\end{equation}
Here we use script $\mathcal{L}$ and $\mathcal{H}$ to indicate we are
considering the problem with a smooth flux function, and in Section 3 we will
use $L$ and $H$ when considering a piecewise linear flux function.

An initial value problem for the Hamilton-Jacobi problem, on $\mathbb{R}$, is
specified as
\begin{subequations}
\label{clboth}%
\begin{align}
u_{t}+\mathcal{H}\left(  u_{x}\right)   &  =0\label{cla}\\
u\left(  x,0\right)   &  =g\left(  x\right)  \label{clb}%
\end{align}

We call the function $w$ a \textit{weak solution }if it (i) $u\left(
x,t\right)  $ satisfies the initial condition (\ref{cla}) and the equation
(\ref{clb}) a.e. in $\left(  x,t\right)  $ and (ii) (see p. 131 \cite{EV}) for
each $t,$ and a.e. $x$ and $x+z,$ $u\left(  x,t\right)  $ satisfies the
inequality
\end{subequations}
\begin{equation}
u\left(  x+z,t\right)  -2u\left(  x,t\right)  +u\left(  x-z,t\right)  \leq
C\left(  1+\frac{1}{t}\right)  z^{2}\ . \label{onesided}%
\end{equation}
The Hopf-Lax formula is defined by%
\begin{equation}
u\left(  x,t\right)  =\min_{y\in\mathbb{R}}\left\{  t\mathcal{L}\left(
\frac{x-y}{t}\right)  +g\left(  y\right)  \right\}  . \label{sec2hl}%
\end{equation}

The following classical results can be found in \cite{EV}, p. 128 and 145.

\begin{theorem}
\label{Thm2.1}Suppose $\mathcal{H}$ is $C^{2},$ uniformly convex and
superlinear, and $g$ is Lipschitz continuous. Then $u\left(  x,t\right)  $ given by the
Hopf-Lax formula (\ref{sec2hl}) is the unique weak solution to (\ref{clboth}).
\end{theorem}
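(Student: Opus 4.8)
The plan is to follow the classical argument as in Evans, verifying in turn that the Hopf--Lax candidate $u(x,t)=\min_{y}\{t\mathcal{L}((x-y)/t)+g(y)\}$ is (i) Lipschitz on $\mathbb{R}\times(0,\infty)$, (ii) solves $u_t+\mathcal{H}(u_x)=0$ at every point of differentiability, (iii) attains the initial data continuously, and (iv) satisfies the one-sided estimate \eqref{onesided}, and then to invoke the uniqueness half of the theory. First I would establish the functional identity $u(x,t)=\min_{y}\{(t-s)\mathcal{L}((x-y)/(t-s))+u(y,s)\}$ for $0<s<t$ (a dynamic-programming / semigroup property), together with joint Lipschitz continuity: since $g$ is Lipschitz with constant, say, $L_g$, and $\mathcal{H}$ is $C^2$, uniformly convex and superlinear so that $\mathcal{L}$ is also $C^2$ and superlinear, the minimizer $y^\ast$ in \eqref{sec2hl} stays in a bounded window $|x-y^\ast|\le Ct$ (else $\mathcal{L}((x-y)/t)$ is too large to compete with $y=x$), and a standard estimate gives $|u(x,t)-u(\hat x,\hat t)|\le C(|x-\hat x|+|t-\hat t|)$.

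Next, using Rademacher's theorem, $u$ is differentiable a.e.; at such a point I would show $u_t+\mathcal{H}(u_x)=0$ in two inequalities. For ``$\le$'': fix a direction, use the semigroup identity with $s=t-h$ and the choice $y=x-h\,z$ in the inner minimization for suitable $z$, divide by $h$, let $h\downarrow0$, and optimize over $z$ to produce $u_t+\mathcal{H}(u_x)\le0$; here one uses $\mathcal{L}^{\ast\ast}=\mathcal{H}$, i.e. $\mathcal{H}(p)=\sup_z\{pz-\mathcal{L}(z)\}$. For ``$\ge$'': pick a minimizer $y^\ast$ for $u(x,t)$, set $z=(x-y^\ast)/t$, and test the semigroup identity along the segment from $(y^\ast,0)$ to $(x,t)$ at intermediate times to get $u(x,t)-u(x-hz,t-h)=h\mathcal{L}(z)$ exactly, whence dividing and letting $h\downarrow0$ yields $u_t+z\cdot u_x=\mathcal{L}(z)\ge \mathcal{H}(u_x)$ after taking the sup defining $\mathcal{L}$ dually — actually one gets $u_t+\mathcal{H}(u_x)\ge0$ from $\mathcal{H}(u_x)\le u_x z-\mathcal{L}(z)=-u_t$. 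Combining, $u_t+\mathcal{H}(u_x)=0$ a.e. The initial condition follows from the Lipschitz bound on $g$: $|u(x,t)-g(x)|\le t\max_{|z|\le C}|\mathcal{L}(z)| + L_g\cdot Ct\to 0$.

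For the entropy/semiconcavity inequality \eqref{onesided}, I would use the uniform convexity of $\mathcal{L}$ (inherited, since $\mathcal{H}\in C^2$ uniformly convex $\Rightarrow \mathcal{L}\in C^2$ with $\mathcal{L}''$ bounded above, so $z\mapsto \mathcal{L}(z)-\tfrac{\theta}{2}z^2$ is concave for some $\theta>0$): choosing the optimal $y^\ast$ for the center point $(x,t)$ and using the \emph{same} shifted point $y^\ast\pm z$ as a (non-optimal) competitor for $(x\pm z,t)$, the three Hopf--Lax values combine so that the $g$-terms cancel in the second difference and only the strongly-convex $\mathcal{L}$-terms remain, giving $u(x+z,t)-2u(x,t)+u(x-z,t)\le \tfrac{C}{t}z^2$, which is stronger than (and implies) \eqref{onesided}. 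Finally, with existence of a weak solution in hand, uniqueness is the classical doubling-of-variables / Kruzhkov-type argument for Hamilton--Jacobi equations with the one-sided bound, which I would cite from \cite{EV} rather than reprove. The main obstacle is the ``$\le$'' direction of the PDE verification — making the limit $h\downarrow0$ rigorous requires carefully controlling where the inner minimizer lies and using the Lipschitz continuity of $u$ to pass to the limit — but none of this requires more than the stated hypotheses ($C^2$, uniformly convex, superlinear $\mathcal{H}$; Lipschitz $g$), so the classical proof goes through verbatim.
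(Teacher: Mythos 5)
Your outline is a faithful reconstruction of the classical Evans argument (semigroup identity, Lipschitz bounds, a.e.\ differentiability via Rademacher, the two-sided verification of the PDE at points of differentiability, the $Cz^{2}/t$ semiconcavity estimate from uniform convexity, and the cited uniqueness theorem), and it is correct apart from two harmless sign-level slips in the ``$\ge$'' direction: the relation along the optimal segment is an inequality $u(x,t)-u(x-hz,t-h)\ge h\mathcal{L}(z)$ rather than an equality, and one uses $\mathcal{H}(u_{x})\ge u_{x}z-\mathcal{L}(z)$ (the supremum dominates each competitor), not $\le$. The paper itself offers no proof of Theorem \ref{Thm2.1} --- it simply cites \cite{EV}, pp.~128 and 145 --- so your proposal and the paper's treatment rest on exactly the same classical argument.
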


Now we consider solutions to a related equation, the general conservation law%
\begin{align}
w_{t}+\left(  H\left(  w\right)  \right)  _{x}  &  =0\text{ in }%
\mathbb{R\times}\text{ }\left(  0,\infty\right) \nonumber\\
w\left(  x,0\right)   &  =g^{\prime}\left(  x\right)  \text{ on }%
\mathbb{R}\times\left\{  t=0\right\}  \label{sec2cl}%
\end{align}

\begin{theorem}
Assume that $\mathcal{H}$ is $C^{2},$ uniformly convex, and $g'\in L^{\infty
}\left(  \mathbb{R}\right)  $. Then we have\newline\newline(i) For each $t>0$
and for all but countably many values $x\in\mathbb{R}$, there exists a unique
point $y\left(  x,t\right)  $ such that%
\begin{equation}
\min_{y\in\mathbb{R}}\left\{  t\mathcal{L}\left(  \frac{x-y}{t}\right)
+g\left(  y\right)  \right\}  =t\mathcal{L}\left(  \frac{x-y\left(
x,t\right)  }{t}\right)  +g\left(  y\left(  x,t\right)  \right)
\end{equation}
(ii) The mapping $x\rightarrow y\left(  x,t\right)  $ is
nondecreasing.\newline\newline(iii) For each $t>0$, the function $w$ defined
by%
\begin{equation}
w\left(  x,t\right)  :=\frac{\partial}{\partial x}\left[  \min_{y\in
\mathbb{R}}\left\{  t\mathcal{L}\left(  \frac{x-y}{t}\right)  +g\left(
y\right)  \right\}  \right]  \label{Thm2.2eq}%
\end{equation}
is in fact given by%
\[
w\left(  x,t\right)  =\left(  \mathcal{H}^{\prime}\right)  ^{-1}\left(
\frac{x-y\left(  x,t\right)  }{t}\right)
\]

\end{theorem}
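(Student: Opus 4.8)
The plan is to build on the Hopf--Lax machinery and the structure of the Legendre transform $\mathcal{L}$, which under the hypotheses is itself $C^{1}$, strictly convex, and superlinear, with $\mathcal{L}' = (\mathcal{H}')^{-1}$. For part (i), fix $t>0$ and define $\phi(y) := t\mathcal{L}\bigl(\tfrac{x-y}{t}\bigr) + g(y)$. Since $\mathcal{L}$ is superlinear and $g \in L^\infty$, $\phi(y) \to +\infty$ as $|y|\to\infty$, so a minimizer exists; existence of a \emph{largest} minimizer $y^{*}(x,t)$ follows by taking a supremum of minimizers and using continuity of $\phi$ in $y$. To get uniqueness for all but countably many $x$, I would first show the monotonicity in (ii) and then invoke the standard fact that a monotone function has at most countably many discontinuities; the points of non-uniqueness will turn out to be exactly the (countably many) jump points of $x\mapsto y^{*}(x,t)$.

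For part (ii), the key is a \emph{crossing} or rearrangement argument. Suppose $x_1 < x_2$ with minimizers $y_1$ (for $x_1$) and $y_2$ (for $x_2$), and suppose toward a contradiction that $y_1 > y_2$. Writing out the inequalities $\phi_{x_1}(y_1) \le \phi_{x_1}(y_2)$ and $\phi_{x_2}(y_2) \le \phi_{x_2}(y_1)$, the $g$-terms cancel when I add them, leaving a purely convex-analytic inequality among the four numbers $\mathcal{L}(\tfrac{x_1-y_1}{t})$, $\mathcal{L}(\tfrac{x_1-y_2}{t})$, $\mathcal{L}(\tfrac{x_2-y_1}{t})$, $\mathcal{L}(\tfrac{x_2-y_2}{t})$. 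Since $\tfrac{x_1-y_1}{t} < \tfrac{x_1-y_2}{t}$ and $\tfrac{x_1-y_1}{t} < \tfrac{x_2-y_1}{t}$ etc., strict convexity of $\mathcal{L}$ forces a strict inequality in the wrong direction unless $y_1 = y_2$, giving the contradiction; applying this to the \emph{largest} minimizers yields that $x\mapsto y^{*}(x,t)$ is nondecreasing.

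For part (iii), I would combine the Hopf--Lax representation of $u$ with the envelope/Danskin principle. At any $x$ where the minimizer $y(x,t)$ is unique, $u(x,t) = \min_y \phi(y)$ is differentiable in $x$ with $\partial_x u(x,t) = t\,\mathcal{L}'\bigl(\tfrac{x-y(x,t)}{t}\bigr)\cdot\tfrac1t = \mathcal{L}'\bigl(\tfrac{x-y(x,t)}{t}\bigr) = (\mathcal{H}')^{-1}\bigl(\tfrac{x-y(x,t)}{t}\bigr)$, using $\mathcal{L}' = (\mathcal{H}')^{-1}$. Since by (i) this holds for all but countably many $x$, and $w(x,t) := \partial_x u(x,t)$ agrees a.e.\ with the weak derivative, the stated formula for $w$ holds for a.e.\ $x$, hence (after choosing the good representative) for each $t>0$ in the sense claimed.

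The main obstacle I anticipate is part (ii): making the crossing argument fully rigorous requires care in extracting a genuinely strict inequality from strict convexity of $\mathcal{L}$ in the degenerate configurations (e.g.\ when two of the four arguments coincide), and in handling the case where minimizers are not unique so that one must track the \emph{largest} minimizer consistently. A clean way to package this is to prove a one-line lemma: if $a < b$ and $c < d$ are reals with $a - b = c - d$ (here $a=\tfrac{x_1-y_1}{t}$, $b = \tfrac{x_1-y_2}{t}$, $c = \tfrac{x_2-y_1}{t}$, $d = \tfrac{x_2-y_2}{t}$, and indeed $a-b = \tfrac{y_2-y_1}{t} = c-d$) but $a < c$, then strict convexity gives $\mathcal{L}(a) + \mathcal{L}(d) < \mathcal{L}(b) + \mathcal{L}(c)$, which contradicts the summed optimality inequalities. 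Everything else is bookkeeping around the envelope theorem and the countability of discontinuities of a monotone function.
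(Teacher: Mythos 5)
The theorem you are proving is stated in the paper as a classical result, cited to Evans (pp.\ 128 and 145) without an internal proof; your proposal reconstructs precisely that standard argument, and the same four-point ``crossing'' inequality reappears in the paper's own Lemma \ref{Lem 4.1} and Theorem \ref{Thm 4.2}, where it is redone for the polygonal flux. So the route is the right one: existence from superlinearity of $\mathcal{L}$ plus at-most-linear growth of $g$, monotonicity of the (largest) minimizer from a rearrangement inequality for convex $\mathcal{L}$, countability of the non-uniqueness set from countability of the jumps of a monotone function, and the envelope identity $\partial_{x}u=\mathcal{L}'\left(\frac{x-y(x,t)}{t}\right)=\left(\mathcal{H}'\right)^{-1}\left(\frac{x-y(x,t)}{t}\right)$ for part (iii).

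There is, however, one concrete error you must fix: the inequality in your packaged lemma points the wrong way, and as written the contradiction evaporates. With $a=\frac{x_{1}-y_{1}}{t}$, $b=\frac{x_{1}-y_{2}}{t}$, $c=\frac{x_{2}-y_{1}}{t}$, $d=\frac{x_{2}-y_{2}}{t}$, $x_{1}<x_{2}$ and $y_{2}<y_{1}$, one has $a<b,c<d$ with $a+d=b+c$, so $(b,c)$ is majorized by $(a,d)$ and strict convexity yields
\begin{equation*}
\mathcal{L}\left(b\right)+\mathcal{L}\left(c\right)<\mathcal{L}\left(a\right)+\mathcal{L}\left(d\right),
\end{equation*}
whereas summing the two optimality inequalities gives $\mathcal{L}\left(a\right)+\mathcal{L}\left(d\right)\leq\mathcal{L}\left(b\right)+\mathcal{L}\left(c\right)$; these two together are the contradiction. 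Your stated conclusion $\mathcal{L}\left(a\right)+\mathcal{L}\left(d\right)<\mathcal{L}\left(b\right)+\mathcal{L}\left(c\right)$ is the same direction as the optimality sum and contradicts nothing. Once the direction is corrected the argument closes, and your worry about degenerate configurations disappears: the only way strictness fails is $\{b,c\}=\{a,d\}$, which is impossible when $x_{1}<x_{2}$ and $y_{2}<y_{1}$ are both strict. Two smaller points: ``exactly the jump points'' in part (i) should be ``contained in the jump points,'' which is all you need; and it is worth one line to note that uniform convexity of $\mathcal{H}$ makes $\mathcal{H}'$ a bijection of $\mathbb{R}$, so $\mathcal{L}$ is finite, $C^{1}$, and superlinear, justifying both the existence step and the identity $\mathcal{L}'=\left(\mathcal{H}'\right)^{-1}$.
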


To illuminate the notion of a weak solution, we briefly describe the
motivation of the definition. Nominally, if we had a smooth function $u$ that
satisfied (\ref{cla}) everywhere in $\left(  x,t\right)  $ and the initial
condition (\ref{clb}) then we could multiply (\ref{cla}) by the spatial
derivative of the test function $\phi\in C_{c}^{\infty}\left(  \mathbb{R\times
}\left(  0,\infty\right)  \right)  $ and integrate by parts to obtain
\begin{equation}
\int_{0}^{\infty}\left\{  \int_{-\infty}^{\infty}u\phi_{xt}+\mathcal{H}\left(
u_{x}\right)  \phi_{x}dx\right\}  dt+\int_{-\infty}^{\infty}g\phi_{x}%
|_{t=0}dx=0\ .
\end{equation}
Now we let $w:=u_{x}$ and integrate by parts in the $x$ variable, (see
\cite{EV} p. 148 for details and conditions). Note that $u\left(  x,t\right)
$ is by assumption differentiable a.e. The test function is differentiable at
all points, and so the product rule applies outside of a set of measure zero.
Hence, one can integrate, and one then has%
\begin{equation}
\int_{0}^{\infty}\left\{  \int_{-\infty}^{\infty}w\phi_{t}+\mathcal{H}\left(
w\right)  \phi_{x}dx\right\}  dt+\int_{-\infty}^{\infty}g^{\prime}\phi
|_{t=0}dx=0\ . \label{testfn}%
\end{equation}
We now say that $w$ is a weak solution to the conservation law if it satisfies
(\ref{testfn}) for all test functions with compact support.

\begin{remark}
\label{Rk uniquecl}From classical theorems, we also know that under the
conditions that $g'$ is continuous and $\mathcal{H}$ is $C^{2}$ and
superlinear, we have a unique weak solution to (\ref{Thm2.2eq}) that is an
integral solution to the conservation law (\ref{sec2cl})$.$ However, at this
stage we do not know if there are other solutions to (\ref{sec2cl}) arising
from a different perspective, where $g$ is a differentiable function.
\end{remark}

In order to obtain a unique solution to the conservation law, one imposes an
additional entropy condition and makes the following definition.

\begin{definition}
We call $w\left(  x,t\right)  $ an \textit{entropy solution} to (\ref{sec2cl})
if: (i) it satisfies (\ref{testfn}) for all test functions $\phi
:\mathbb{R\times}[0,\infty)\rightarrow\mathbb{R}$ that have compact support
and (ii) for a.e. $x\in\mathbb{R}$, $t>0,$ $z>0$, we have
\begin{equation}
w\left(  x+z,t\right)  -w\left(  x,t\right)  \leq C\left(  1+\frac{1}%
{t}\right)  z\ .
\end{equation}

\end{definition}

In order to prove that $w=u_{x}$ is the unique solution to (\ref{sec2cl}), we
note the following: In Theorem 1, p. 145 of \cite{EV} it suffices for the
initial condition to be continuous. In the theorem, the only use of the
$\mathcal{L}^{\infty}\left(  \mathbb{R}\right)  $ condition is that its
integral is differentiable a.e. which is certainly guaranteed by the continuity.

Under the assumptions of Theorem 1, the Lemma of p. 148 of \cite{EV} states
that, with $G:=\left(  \mathcal{H}^{\prime}\right)  ^{-1},$ the function
$w=u_{x}$, i.e.,
\begin{equation}
w\left(  x,t\right)  =\partial_{x}u\left(  x,t\right)  =\partial_{x}\min
_{y\in\mathbb{R}}\left\{  t\mathcal{L}\left(  \frac{x-y}{t}\right)  +g\left(
y\right)  \right\}  =G\left(  \frac{x-y^{\ast}\left(  x,t\right)  }{t}\right)
\label{hl2}%
\end{equation}
satisfies the one-sided inequality%
\begin{equation}
w\left(  x+z,t\right)  -w\left(  x,t\right)  \leq\frac{C}{t}z\ . \tag{7}%
\end{equation}

Once we have established that $w$ is an entropy solution, the uniqueness of
the entropy solution (up to a set of measure zero) is a basic result that is
summarized in \cite{EV} (Theorem 3, p 149):

\bigskip

\begin{theorem}
\label{Thm 2.2}Assume $H$ is convex and $C^{2}$. Then there exists (up to a
set of measure $0$), at most one entropy solution of (\ref{sec2cl}).
\end{theorem}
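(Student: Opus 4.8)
The plan is to follow the transport-duality argument of \cite{EV}, §3.4.3. Let $w_{1}$ and $w_{2}$ be two entropy solutions of (\ref{sec2cl}) with the same (bounded) initial datum $g^{\prime}$, and set $v:=w_{1}-w_{2}$. Subtracting the weak identities (\ref{testfn}) for $w_{1}$ and $w_{2}$, the boundary term at $t=0$ cancels, so
\begin{equation}
\int_{0}^{\infty}\!\!\int_{-\infty}^{\infty}v\,\phi_{t}+\bigl(H(w_{1})-H(w_{2})\bigr)\phi_{x}\,dx\,dt=0\qquad\text{for all }\phi\in C_{c}^{\infty}\bigl(\mathbb{R}\times[0,\infty)\bigr).
\end{equation}
Writing $H(w_{1})-H(w_{2})=b\,v$ with $b(x,t):=\int_{0}^{1}H^{\prime}\bigl(\tau w_{1}(x,t)+(1-\tau)w_{2}(x,t)\bigr)\,d\tau$, we have $b\in L^{\infty}$ (since $H^{\prime}$ is continuous and the solutions are bounded), and $v$ is a weak solution of the \emph{linear} conservation law $v_{t}+(bv)_{x}=0$. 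The essential structural input is a one-sided bound on $b$: because $H$ is convex, $H^{\prime}$ is nondecreasing, and being $C^{1}$ it is locally Lipschitz; combining this with the entropy inequalities $w_{i}(x+z,t)-w_{i}(x,t)\le C(1+\tfrac{1}{t})z$ $(i=1,2)$ gives a constant $C_{1}$ with $b(x+z,t)-b(x,t)\le C_{1}(1+\tfrac{1}{t})z$ for $z>0$ and a.e.\ $x$.

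Now fix $T>0$, $\delta\in(0,T)$, and $\psi\in C_{c}^{\infty}\bigl(\mathbb{R}\times(\delta,T)\bigr)$. Mollify $b$ in $(x,t)$ (extending it suitably near $t=0$, or simply working on $\{t\ge\delta/2\}$) to a smooth $b^{\varepsilon}$ with $\|b^{\varepsilon}\|_{\infty}\le\|b\|_{\infty}$, $b^{\varepsilon}\to b$ in $L^{1}_{\mathrm{loc}}$, and $b^{\varepsilon}_{x}(x,t)\le 2C_{1}(1+\tfrac{1}{t})$ on $\{t\ge\delta/2\}$. Solve the dual transport problem backward in time,
\begin{equation}
\phi^{\varepsilon}_{t}+b^{\varepsilon}\,\phi^{\varepsilon}_{x}=\psi\ \ \text{in }\mathbb{R}\times(0,T),\qquad\phi^{\varepsilon}(\cdot,T)=0,
\end{equation}
by the method of characteristics; since $b^{\varepsilon}$ is bounded and smooth, $\phi^{\varepsilon}$ is smooth and, by finite speed of propagation, has compact support in $\mathbb{R}\times[0,T]$, hence is an admissible test function for the identity above. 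Testing against $\phi^{\varepsilon}$ and using $H(w_{1})-H(w_{2})=bv$ gives
\begin{equation}
\int_{0}^{T}\!\!\int v\,\psi=\int_{0}^{T}\!\!\int\Bigl(v\,\phi^{\varepsilon}_{t}+b\,v\,\phi^{\varepsilon}_{x}+(b^{\varepsilon}-b)\,v\,\phi^{\varepsilon}_{x}\Bigr)=\int_{0}^{T}\!\!\int(b^{\varepsilon}-b)\,v\,\phi^{\varepsilon}_{x},
\end{equation}
the first two terms vanishing by the weak formulation for $v$.

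The crux is a bound on $\phi^{\varepsilon}_{x}$ uniform in $\varepsilon$, and this is where the one-sidedness of the entropy condition is indispensable. Differentiating the dual equation in $x$ gives $(\phi^{\varepsilon}_{x})_{t}+b^{\varepsilon}(\phi^{\varepsilon}_{x})_{x}=\psi_{x}-b^{\varepsilon}_{x}\,\phi^{\varepsilon}_{x}$; integrating this ODE along characteristics \emph{backward} from $t=T$ (where $\phi^{\varepsilon}_{x}=0$) produces an integrating factor $\exp\bigl(\int_{t}^{s}b^{\varepsilon}_{x}\,d\tau\bigr)$, and the \emph{upper} bound $b^{\varepsilon}_{x}\le 2C_{1}(1+\tfrac1\tau)$ keeps this factor bounded on $\{t\ge\delta/2\}$, yielding $\sup_{x}|\phi^{\varepsilon}_{x}(x,t)|\le C(T,\delta,\psi)$ for $t\in[\delta/2,T]$, uniformly in $\varepsilon$. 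With $\phi^{\varepsilon}_{x}$ so bounded, $|v|\le\|w_{1}\|_{\infty}+\|w_{2}\|_{\infty}$, and $b^{\varepsilon}\to b$ in $L^{1}_{\mathrm{loc}}$, dominated convergence forces $\int_{0}^{T}\!\int(b^{\varepsilon}-b)v\,\phi^{\varepsilon}_{x}\to0$, hence $\int_{0}^{T}\!\int v\,\psi=0$. Since $\delta>0$ is arbitrary and $\psi$ ranges over all test functions supported in $\{t>\delta\}$, we get $v=0$ a.e., i.e.\ $w_{1}=w_{2}$ a.e. The main obstacle is exactly this uniform gradient estimate for the dual solution; the $1/t$ singularity in the entropy bound, which obstructs a bound down to $t=0$, is precisely what is handled by working on the slabs $\{t\ge\delta\}$ and letting $\delta\downarrow0$ (legitimate because the common initial datum is continuous, so no extra mass is created at $t=0$).
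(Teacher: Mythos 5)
The paper itself does not prove this theorem; it simply invokes Evans, Theorem 3 of \S 3.4.3 (p.~149), and your proposal is a reconstruction of exactly that transport--duality argument: the decomposition $H(w_1)-H(w_2)=bv$, the backward dual problem with mollified coefficient $b^{\varepsilon}$, and the use of the one-sided entropy inequality to get an \emph{upper} bound on $b^{\varepsilon}_x$ and hence control of the integrating factor along backward characteristics. All of that is the right skeleton, and your identification of the one-sided bound on $b$ (via monotonicity and local Lipschitz continuity of $H'$ applied to the convex combination $\tau w_1+(1-\tau)w_2$) is correct.

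There is, however, one genuine gap: your final passage to the limit only controls the error term $\int_0^T\!\!\int (b^{\varepsilon}-b)\,v\,\phi^{\varepsilon}_x$ on the region $t\geq\delta/2$, where you have the uniform sup bound on $\phi^{\varepsilon}_x$. But $\phi^{\varepsilon}$ does not vanish for $t<\delta/2$ merely because $\psi$ is supported in $\{t>\delta\}$: solving backward, $\phi^{\varepsilon}$ is constant along characteristics below the support of $\psi$ and is generically nonzero all the way down to $t=0$, while $\phi^{\varepsilon}_x$ picks up the factor $\exp\bigl(\int_t^{\delta}b^{\varepsilon}_x\,d\tau\bigr)$, which the bound $b^{\varepsilon}_x\leq 2C_1(1+\tfrac{1}{\tau})$ only controls as $(\delta/t)^{2C_1}$ --- unbounded as $t\downarrow0$ --- and for $t\lesssim\varepsilon$ the mollified coefficient need not satisfy the one-sided bound at all. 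So "dominated convergence" does not apply on $(0,\delta/2)$, and the closing remark about no mass being created at $t=0$ does not substitute for an estimate. The standard repair (and the one in Evans) is to prove the $\varepsilon$-uniform, $t$-uniform bound $\int_{\mathbb{R}}\lvert\phi^{\varepsilon}_x(x,t)\rvert\,dx\leq\int_0^T\!\!\int\lvert\psi_x\rvert\,dx\,ds$, which follows because the spatial total variation of the dual solution does not increase under the backward transport flow; then the contribution of the strip $\{0<t<\varepsilon_0\}$ is bounded by $2\lVert v\rVert_{\infty}\lVert b\rVert_{\infty}\,C\,\varepsilon_0$, and one lets $\varepsilon\to0$ first and $\varepsilon_0\to0$ second. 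With that lemma inserted, your argument closes.
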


\bigskip

Note that one only needs $g'$ to be $\mathcal{L}^{\infty}$ in this theorem. One
has then the classical result:

\begin{theorem}
\label{Thm 2.3}Assume that $\mathcal{H}$ is $C^{2},$ superlinear and uniformly
convex. Then the function $w\left(  x,t\right)  $ given by (\ref{hl2}) is the
unique entropy solution to the conservation law (\ref{testfn})$.$
\end{theorem}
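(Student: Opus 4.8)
The plan is to assemble Theorem~\ref{Thm 2.3} from the pieces already in place rather than to re-derive anything from scratch. The statement to be proved says that under the hypotheses $\mathcal{H}\in C^2$, $\mathcal{H}$ superlinear and uniformly convex (and, implicitly, $g'$ at least continuous so that the Hopf-Lax integrand is well defined and its $x$-derivative exists a.e.), the function $w$ given by \eqref{hl2} is the unique entropy solution of the conservation law. Uniqueness is immediate from Theorem~\ref{Thm 2.2}, so the real content is \emph{existence}: one must verify that the specific $w=G((x-y^\ast(x,t))/t)$, with $G=(\mathcal{H}')^{-1}$, is in fact an entropy solution. That splits into two checks matching the two clauses of the definition of entropy solution.

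First I would establish the weak-formulation identity \eqref{testfn}. The idea is to let $u$ be the Hopf-Lax solution \eqref{sec2hl} with initial data $g$; by Theorem~\ref{Thm2.1} (and Remark~\ref{Rk uniquecl} for the continuity-of-$g'$ refinement) $u$ is the unique weak solution of the Hamilton-Jacobi equation \eqref{clboth}, hence Lipschitz and satisfying \eqref{cla} a.e. Then, following the motivating computation already displayed in the excerpt (multiply \eqref{cla} by $\phi_x$, integrate by parts in $x$ and $t$, pass from $u$ to $w:=u_x$ using the product rule, valid off a null set since $u$ is differentiable a.e.\ and $\phi$ is smooth), one obtains exactly \eqref{testfn} for every $\phi\in C_c^\infty(\mathbb{R}\times[0,\infty))$. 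Combined with the Lemma on p.~148 of \cite{EV} cited in the excerpt, this gives the closed form $w(x,t)=\partial_x u(x,t)=G((x-y^\ast(x,t))/t)$, so clause~(i) of the entropy-solution definition holds for this $w$.

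Next I would verify clause~(ii), the one-sided jump inequality. This is precisely inequality~(7) reproduced in the excerpt: the Lemma of p.~148 of \cite{EV} asserts that $w=u_x$ satisfies $w(x+z,t)-w(x,t)\le \tfrac{C}{t}z$ for $z>0$, which is a fortiori $\le C(1+\tfrac1t)z$. The underlying mechanism worth recalling in the proof is the semiconcavity estimate \eqref{onesided} on $u$: the Hopf-Lax formula forces $u(x+z,t)-2u(x,t)+u(x-z,t)\le C(1+\tfrac1t)z^2$, and differentiating this second-difference bound in $x$ (legitimate a.e.) yields the one-sided bound on $w=u_x$. With (i) and (ii) both established, $w$ is an entropy solution; Theorem~\ref{Thm 2.2} then says any entropy solution agrees with it up to a null set, which is the asserted uniqueness.

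The main obstacle is not any single hard estimate — all the analytic inputs (Hopf-Lax well-posedness, the p.~148 Lemma, the semiconcavity bound) are quoted results — but rather the bookkeeping of \emph{which} regularity hypothesis is actually needed at each step. In particular one must be careful that $w=u_x$ is genuinely defined a.e.\ and that the integration by parts converting the Hamilton-Jacobi weak form into the conservation-law weak form \eqref{testfn} is justified under only continuity (not $C^1$ or $L^\infty$ boundedness) of $g'$, exactly as flagged in the text after Theorem~\ref{Thm 2.2}. Once that is handled cleanly, the proof is essentially a citation-and-assembly argument: existence from Theorem~\ref{Thm2.1} plus the two Lemmas of \cite{EV}, uniqueness from Theorem~\ref{Thm 2.2}.
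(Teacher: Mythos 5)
Your proposal matches the paper's own argument: the paper likewise assembles Theorem~\ref{Thm 2.3} from the preceding discussion, obtaining clause (i) of the entropy-solution definition from the integration-by-parts derivation of \eqref{testfn} together with the classical Hopf-Lax/Lax-Oleinik results of \cite{EV} (noting that continuity of $g'$ suffices in place of $L^\infty$), clause (ii) from the one-sided inequality of the Lemma on p.~148 of \cite{EV}, and uniqueness from Theorem~\ref{Thm 2.2}. This is correct and is essentially the same citation-and-assembly route the paper takes.
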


Note that we need the uniformly convexity assumption in order that the
one-sided condition holds, which in turn is necessary for the uniqueness.

\bigskip

A classical result is that if $y\left(  x,t\right)  $ is defined as a
minimizer of
\begin{equation}
Q\left(  y;x,t\right)  :=t\mathcal{L}\left(  \frac{x-y}{t}\right)  +g\left(
y\right)
\end{equation}
then it is unique and the mapping $x\mapsto y\left(  x,t\right)  $ is
non-decreasing, and hence, continuous except at countably many points $x$ (for
each $t$) and differentiable a.e., in $x$ for each $t.$ The Lax-Oleinik
formula above, which expresses the solution $w$ to the conservation law as a
function of $\left(  \mathcal{H}^{\prime}\right)  ^{-1}$.

This formula, of course, utilizes the fact that $\mathcal{H}^{\prime}$ is
strictly increasing, i.e., that $\mathcal{H}\in C^{2}$ and uniformly convex.
Using similar ideas, we present a more useful formula that will be shown in
later theorems to be valid even when the inverse of $\mathcal{H}^{\prime}$
does not exist. For these theorems we need the following notion to express the
argument of a minimizer.

\begin{definition}
Let $B$ be a measurable set and suppose that there is a unique minimizer
$y^{\ast}$ for a quantity $Q\left(  y\right)  $ such that%
\[
Q\left(  y^{\ast}\right)  =\min_{y\in B}Q\left(  y\right)  .
\]
Define the function $\arg$ to mean that%
\[
y^{\ast}=:\arg\min_{y\in B}Q\left(  y\right)  .
\]
In the case that the minimum is achieved over some collection of points in
$B$, denote by $\arg^{+}$ the supremum of all such points, regardless of
whether the supremum of this set is a minimizer itself.
\end{definition}

\begin{theorem}
\label{Thm 2.4}Let $\mathcal{H}\in C^{2}$ and convex and $g\in C^{1}$. Suppose
that for each $\left(  x,t\right)  ,$ the quantity
\begin{equation}
y^{\ast}\left(  x,t\right)  =\inf_{y\in\mathbb{R}}\left\{  t\mathcal{L}\left(
\frac{x-y}{t}\right)  +g\left(  y\right)  \right\}
\end{equation}
is well-defined, finite, and \textbf{unique}. Then
\begin{equation}
\mathcal{L}^{\prime}\left(  \frac{x-y^{\ast}\left(  x,t\right)  }{t}\right)
=g^{\prime}\left(  y^{\ast}\left(  x,t\right)  \right)
\end{equation}
and $w\left(  x,t\right)  :=\partial_{x}\min_{y\in\mathbb{R}}\left\{
t\mathcal{L}\left(  \frac{x-y}{t}\right)  +g\left(  y\right)  \right\}  $ is
given by%
\begin{equation}
w\left(  x,t\right)  =g^{\prime}\left(  y^{\ast}\left(  x,t\right)  \right)
\ .
\end{equation}

\end{theorem}

\begin{proof}
[Proof of Theorem \ref{Thm 2.4}]From Section 3.4, Thm 1 of \cite{EV}, we know
that a minimizer of $t\mathcal{L}\left(  \frac{x-y}{t}\right)  +g\left(
y\right)  $ (if unique) is differentiable a.e. in $x.$ We then have the
following calculations.

Since we are assuming that $\inf_{y\in\mathbb{R}}\left\{  t\mathcal{L}\left(
\frac{x-y}{t}\right)  +g\left(  y\right)  \right\}  >-\infty$ and both
$\mathcal{L}$ and $g$ are differentiable, there exists a minimizer. Since
$\mathcal{L}$ and $g$ are differentiable, for any potential minimizer one has
the identity%
\begin{equation}
0=\partial_{y}\left\{  t\mathcal{L}\left(  \frac{x-y}{t}\right)  +g\left(
y\right)  \right\}
\end{equation}
so that (for a.e. $x$) at a minimum, $y^{\ast}\left(  x,t\right)  ,$ one has
\begin{equation}
-f\left(  y^{\ast},x,t\right)  :=\mathcal{L}^{\prime}\left(  \frac{x-y^{\ast
}\left(  x,t\right)  }{t}\right)  =g^{\prime}\left(  y^{\ast}\left(
x,t\right)  \right)  .
\end{equation}
We have then at any point $x$ where $y^{\ast}\left(  x,t\right)  $ is
differentiable,
\begin{align}
w\left(  x,t\right)   &  :=\partial_{x}\min_{y\in\mathbb{R}}\left\{
t\mathcal{L}\left(  \frac{x-y}{t}\right)  +g\left(  y\right)  \right\}
\label{Thm 2.4 algebra}\\
&  =\partial_{x}\left\{  t\mathcal{L}\left(  \frac{x-y^{\ast}\left(
x,t\right)  }{t}\right)  +g\left(  y^{\ast}\left(  x,t\right)  \right)
\right\} \nonumber\\
&  =t\mathcal{L}^{\prime}\left(  \frac{x-y^{\ast}\left(  x,t\right)  }%
{t}\right)  \cdot\left(  \frac{-\partial_{x}y^{\ast}\left(  x,t\right)  }%
{t}+1\right)  +g^{\prime}\left(  y^{\ast}(x,t)\right)  \partial_{x}y^{\ast
}\left(  x,t\right)  .
\end{align}
The previous identity implies cancellation of the first and third terms,
yielding%
\begin{equation}
w\left(  x,t\right)  =g^{\prime}\left(  y^{\ast}\left(  x,t\right)  \right)
\ \ \ \ a.e.\ x\in\mathbb{R}\text{ }for\ each\ t>0.
\end{equation}

\end{proof}

Note that the uniqueness of the minimizer is used in the second line of
(\ref{Thm 2.4 algebra}). If there were two minimizers, for example, then as we
vary $x,$ one of the minima might decrease more rapidly, and that would be the
relevant minimum for the $x$ derivative.

We now explore the case with two minimizers. Using the notation $t\mathcal{L}%
\left(  \frac{x-y}{t}\right)  =:f\left(  y;x,t\right)  $ as defined above, we
note that whenever we have a minimum of $f\left(  y;x,t\right)  +g\left(
y\right)  $ at some $y_{0}$ we must have%
\begin{equation}
\partial_{y}f\left(  y_{0};x,t\right)  +g^{\prime}\left(  y_{0}\right)  =0.
\end{equation}
We are interested in computing $w\left(  x,t\right)  =\partial_{x}\min
_{y\in\mathbb{R}}\left\{  f\left(  y;x,t\right)  +g\left(  y\right)  \right\}
.$ Suppose that there are two distinct minima, $\hat{y}_{0}$ and $\tilde
{y}_{0},$ with $\hat{y}_{0}<\tilde{y}_{0}$ at some point $x_{0}.$ Then we can
define $\hat{y}\left(  x,t\right)  $ and $\tilde{y}\left(  x,t\right)  $ as
distinct local minimizers that are differentiable in $x,$ and satisfy
\begin{equation}
\lim_{x\rightarrow x_{0}}\hat{y}\left(  x,t\right)  =\hat{y}_{0}%
\ \ and\ \ \lim_{x\rightarrow x_{0}}\tilde{y}\left(  x,t\right)  =\tilde
{y}_{0}\ .
\end{equation}
Then as we vary $x$, the minima will shift vertically and horizontally. The relevant minima are those that have the largest downward shift, as the others immediately cease to be minima.

This means that%
\begin{equation}
w\left(  x,t\right)  =\min\left\{  \partial_{x}\left[  f\left(  \hat{y}\left(
x,t\right)  ;x,t\right)  +g\left(  \hat{y}\left(  x,t\right)  \right)
\right]  ,\partial_{x}\left[  f\left(  \tilde{y}\left(  x,t\right)
;x,t\right)  +g\left(  \tilde{y}\left(  x,t\right)  \right)  \right]
\right\}  |_{x=x_{0}}\ .
\end{equation}
Then, as the calculations in the proof of the theorem above show, one has%
\begin{align}
w\left(  x,t\right)   &  =\min\left\{  g^{\prime}\left(  \hat{y}_{0}\right)
,g^{\prime}\left(  \tilde{y}_{0}\right)  \right\} \nonumber\\
&  =\min\left\{  -\partial_{y}f\left(  \hat{y}_{0},x_{0},t\right)
,-\partial_{x}f\left(  \tilde{y}_{0};x,t\right)  \right\}  .
\end{align}
Since we are assuming that $\hat{y}_{0}<\tilde{y}_{0}$ and $f^{\prime}$ is
increasing, we see that the minimum of these two is $-\partial_{y}f\left(
\tilde{y}_{0};x_{0},t\right)  ,$ yielding,%
\begin{equation}
w\left(  x,t\right)  =-\partial_{y}f\left(  \tilde{y}_{0};x_{0},t\right)
=g^{\prime}\left(  \tilde{y}_{0}\right)  .
\end{equation}

Now suppose that for fixed $\left(  x_{0},t\right)  $ we have a set of
minimizers $\left\{  y_{\alpha}\right\}  $ with $\alpha\in A$ for some set
$A$. Should $A$ consist of a finite number of elements, an elementary
extension of the above argument generalizes the result to the maximum of these minimizers.

Next, suppose that the set has an infinite number of members. The case where
the supremum of this set is $+\infty$ is degenerate and will be excluded by
our assumptions. Thus, assume that for a given $\left(  x,t\right)  ,$ the set
$\left\{  y_{\alpha}\right\}  $ is bounded, and call its\ supremum $y^{\ast}.$
Then either $y^{\ast}\in A,$ i.e., it must be a minimizer, or there is a
sequence $\left\{  y_{j}\right\}  $ in $A$ converging to $y^{\ast}.$ If
$y^{\ast}\not \in A$, then we have, similar to the assertion above, the
identity%
\begin{equation}
w\left(  x,t\right)  =\inf_{\alpha\in A}\left\{  -\partial_{y}f\left(
y_{\alpha};x,t\right)  \right\}  .
\end{equation}
Since $f\in C^{2}$ and $y_{j}\rightarrow y^{\ast}$ we see that
\begin{equation}
w\left(  x,t\right)  =-\partial_{y}f\left(  y^{\ast};x,t\right)  =g^{\prime
}\left(  y^{\ast}\right)  . \label{Sec2supmin}%
\end{equation}
Note that (\ref{Sec2supmin}) is valid whether or not $y^{\ast}$ is a minimizer.

Suppose that $f\in C^{2}$ is convex and that we have a continuum of minimizers
again. Suppose further that $f^{\prime}$ is nondecreasing, and there is an
interval $\left[  a,b\right]  $ of minimizers of $\left\{  f\left(
y;x,t\right)  +g\left(  y\right)  \right\}  .$ Note that the form of $f$ is
such that we can write it as%
\begin{equation}
f\left(  y;x,t\right)  =\hat{f}\left(  y-x\right)
\end{equation}
with $\hat{f}$ increasing. We can perform a calculation similar to the ones
above by drawing the graphs of $\hat{f}$ and $g$ as a function of $y$ at
$x_{0}$ as follows:%
\begin{align}
w\left(  x,t\right)   &  =\partial_{x}\min_{y\in\left[  a,b\right]  }\left\{
f\left(  y;x_{0}\right)  +g\left(  y\right)  \right\} \nonumber\\
&  =\lim_{\delta\rightarrow0}\frac{\min_{y\in\left[  a,b\right]  }\left\{
\hat{f}\left(  y-x_{0}-\delta\right)  +g\left(  y\right)  \right\}
-\min_{y\in\left[  a,b\right]  }\left\{  \hat{f}\left(  y-x_{0}\right)
+g\left(  y\right)  \right\}  }{\delta}.
\end{align}
We are assuming that there is an interval $y\in\left[  a,b\right]  $ of
minimizers, such that $f\left(  y-x_{0}\right)  +g\left(  y\right)  =C_{1}$
for some constant $C_{1}.$ This means $f\ ^{\prime}\left(  y-x_{0}\right)
=g^{\prime}\left(  y\right)  .$ Since $C_{1}$ occurs on both parts of the
subtraction, we can drop it. Using the mean value theorem we have%
\begin{equation}
\hat{f}\left(  y-x_{0}-\delta\right)  =\hat{f}\left(  y-x_{0}\right)
-\delta\hat{f}\ ^{\prime}\left(  \zeta\right)
\end{equation}
where $\zeta$ is between $y-x_{0}$ and $y-x_{0}-\delta.$ Using the identity
$\hat{f}\ ^{\prime}\left(  y-x_{0}\right)  =g^{\prime}\left(  y\right)  $ we
can write%
\begin{align}
w\left(  x,t\right)   &  =\lim_{\delta\rightarrow0}\frac{\min_{y\in\left[
a,b\right]  }\left\{  -\delta\hat{f}\ ^{\prime}\left(  \zeta\right)  \right\}
-0}{\delta}\nonumber\\
&  =\lim_{\delta\rightarrow0}\min_{y\in\left[  a,b\right]  }\left\{  -\hat
{f}\ ^{\prime}\left(  \zeta\right)  \right\}  =-\hat{f}\ ^{\prime}\left(
b-x_{0}\right)  =-f\ ^{\prime}\left(  b\right)
\end{align}
since $f^{\prime}$ is nondecreasing, and the minimum of $-f^{\prime}\left(
y\right)  $ is attained at the rightmost point.

Although we have only considered the cases where the set of minimizers is
countable or an interval, this argument suffices for the general case. Indeed,
the set $A$ of minimizers will be measurable. If it has finite measure, it can
be expressed as a countable union of disjoint closed intervals $A_{j}$, i.e.
$A=\cup_{j=1}^{\infty}A_{j}$. It is then equivalent to apply the argument for
the countable set of minimizers to the points $y_{j}=\sup A_{j}$ and proceed
as above. To illustrate these ideas, consider the following example.

\begin{example}
\label{ExIntervalSmooth}Let $f\left(  y;x\right)  :=\left(  x-y\right)  ^{2}$
and $g\left(  y\right)  :=-y^{2}$ for $y\in\left[  a,b\right]  $ and increase
rapidly outside of $\left[  a,b\right]  $, suppressing $t.$ We have $f\left(
y;0\right)  +g\left(  y\right)  =y^{2}-y^{2}=0$ so all points in $\left[
a,b\right]  $ are minimizers (see Figure \ref{IMx2}). We want to calculate%
\begin{equation}
\partial_{x}\min_{y\in\left[  a,b\right]  }\left\{  f\left(  y;x\right)
+g\left(  y\right)  \right\}  |_{x=0}\ ,\label{count_min1}
\end{equation}
i.e.,%
\begin{align}
&  \lim_{\delta\rightarrow0}\frac{\min_{y\in\left[  a,b\right]  }\left\{
f\left(  y;\delta\right)  +g\left(  y\right)  \right\}  -\min_{y\in\left[
a,b\right]  }\left\{  f\left(  y;0\right)  +g\left(  y\right)  \right\}
}{\delta}\nonumber\\
&  =\lim_{\delta\rightarrow0}\left\{  \delta^{-1}\min_{y\in\left[  a,b\right]
}\left\{  \left(  y-\delta\right)  ^{2}-y^{2}\right\}  \right\}  =-2b.
\end{align}
I.e., $w\left(  x,t\right)  :=\partial_{x}\min_{y\in\left[  a,b\right]
}\left\{  f\left(  y;x\right)  +g\left(  y\right)  \right\}  |_{x=0}$ is given
by $-\partial_{y}f\left(  y;x\right)  $ at the rightmost point of the interval
$\left[  a,b\right]  $:
\begin{equation}
-\partial_{y}f\left(  y;0\right)  =-2b\ at~y=b. \label{count_min2}
\end{equation}
Note that by continuity, we have the same conclusion if the interval is open
at the right endpoint $b$.
\end{example}

\bigskip
\begin{figure}
[ptb]
\begin{center}
\includegraphics[width=\linewidth]{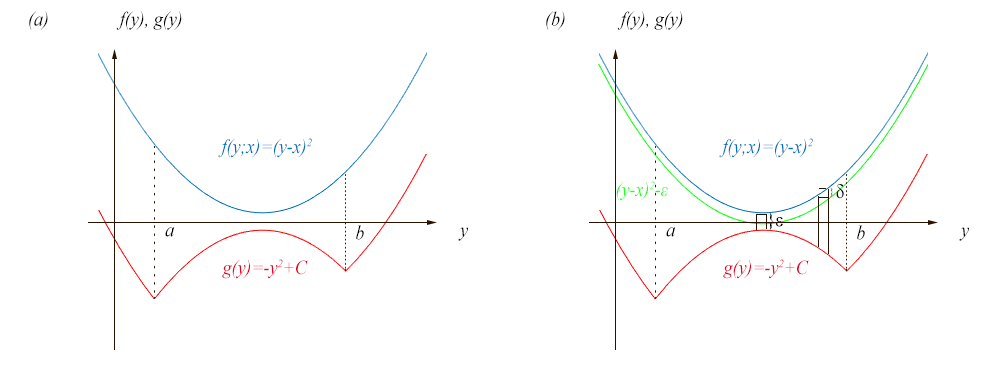}
\caption{Consider the case when $f\left(  y;x\right)  $ takes the parabolic
form $\left(  x-y\right)  ^{2}$: (a) In some pathological cases, the initial
condition may coincide in such a way that for an entire interval $\left[
a,b\right]  $, the infimum on the right-hand side of (\ref{Thm2.5min}) is
achieved. In Example \ref{ExIntervalSmooth}, we illustrate the application of
Theorem \ref{Thm 2.5}; (b) Observe that when $f$ is shifted by a small amount,
there is a varying impact on the continuum of minimizers (where here
$\delta<\varepsilon$), affirming the sensitivity of this special case.}
\label{IMx2}%
\end{center}
\end{figure}

Using the calculations in (\ref{count_min1})-(\ref{count_min2}), we can improve Theorem \ref{Thm 2.4} above by
removing the "unique minimizer"\ restriction.

\begin{theorem}
\label{Thm 2.5}Let $\mathcal{H}\in C^{2}$ and convex and $g\in C^{1}$. Suppose
that for each $\left(  x,t\right)  $, the quantity%
\begin{equation}
y^{\ast}\left(  x,t\right)  =\arg^{+}\inf_{y\in\mathbb{R}}\left\{
t\mathcal{L}\left(  \frac{x-y}{t}\right)  +g\left(  y\right)  \right\}
\label{Thm2.5min}%
\end{equation}
is well-defined (finite). Then%
\begin{equation}
\mathcal{L}^{\prime}\left(  \frac{x-y^{\ast}\left(  x,t\right)  }{t}\right)
=g^{\prime}\left(  y^{\ast}\left(  x,t\right)  \right)
\end{equation}
and $w\left(  x,t\right)  :=\partial_{x}\min_{y\in\mathbb{R}}\left\{
t\mathcal{L}\left(  \frac{x-y}{t}\right)  +g\left(  y\right)  \right\}  $ is
given by%
\begin{equation}
w\left(  x,t\right)  =g^{\prime}\left(  y^{\ast}\left(  x,t\right)  \right)
\ .
\end{equation}

\end{theorem}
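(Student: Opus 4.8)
The plan is to reduce Theorem \ref{Thm 2.5} to the computations already performed in the paragraphs preceding it, namely the case analysis for two minimizers, for finitely many minimizers, for a continuum forming an interval, and for a general measurable set written as a countable union of closed intervals. First I would fix $t>0$ and argue that for a.e.\ $x$, the function $x\mapsto m(x,t):=\min_{y\in\mathbb{R}}\{t\mathcal{L}((x-y)/t)+g(y)\}$ is differentiable in $x$. Since $\mathcal{L}$ is convex (as the Legendre transform of the convex $\mathcal{H}$) and $g\in C^1$, the map $m(\cdot,t)$ is a minimum of a family of functions with a uniform semiconcavity-type bound, hence locally Lipschitz and differentiable a.e.; this is exactly the regularity invoked from Section 3.4, Thm 1 of \cite{EV}. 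At any such point of differentiability $x$, the one-sided difference quotients from the left and the right must agree, and the forward/backward variational computation shows the derivative equals the \emph{smallest} of the quantities $-\partial_y f(y_\alpha;x,t)$ over the active minimizers $y_\alpha$, since only the minimizer with the largest downward shift survives an infinitesimal perturbation of $x$.

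The second step is to identify that smallest shift with the value at the largest minimizer. Writing $f(y;x,t)=\hat f(y-x)$ with $\hat f=t\mathcal{L}(\cdot/t)$, the key structural fact is that $\hat f'$ (equivalently $f'$ as a function of its argument) is \emph{nondecreasing}, because $\mathcal{L}$ is convex. Consequently $-\partial_y f(y;x,t)=\hat f'(y-x)$ is nondecreasing in $y$, so among the minimizers the minimum of $-\partial_y f(y_\alpha;x,t)$ is attained at the supremum of the minimizer set, i.e.\ at $y^\ast(x,t)=\arg^+\inf$. Combined with the first-order condition $\hat f'(y_\alpha-x)=g'(y_\alpha)$ that holds at every minimizer (interior to $\mathbb{R}$, using $g\in C^1$ and $\mathcal{L}$ differentiable a.e.), this yields both claimed identities: $\mathcal{L}'((x-y^\ast)/t)=g'(y^\ast)$ and $w(x,t)=g'(y^\ast(x,t))$.

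For the general-measure case I would follow the remark already made in the text: the minimizer set $A=A(x,t)$ is closed (by continuity of $y\mapsto f(y;x,t)+g(y)$) and bounded (by hypothesis, since the degenerate $+\infty$ case is excluded), hence compact, and $y^\ast=\sup A\in A$. If $A$ is a single interval $[a,b]$, Example \ref{ExIntervalSmooth} and the preceding interval computation give $w=-f'(b)=g'(b)=g'(y^\ast)$ directly via the mean value theorem applied to $\hat f(y-x_0-\delta)-\hat f(y-x_0)$. If $A$ is more complicated, decompose it into its (at most countably many) connected components, apply the finite/countable-set argument to the collection of right endpoints $\sup A_j$, and note that the overall supremum $y^\ast$ dominates all of these, so by monotonicity of $\hat f'$ the governing shift is again $\hat f'(y^\ast-x)$. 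Taking $\delta\to 0$ through the minimum preserves the identification because $f\in C^2$ makes $\hat f'$ continuous, so the value of $\zeta$ between $y^\ast-x$ and $y^\ast-x-\delta$ converges to $y^\ast-x$.

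The main obstacle I anticipate is not any single calculation but the rigorous justification that the two-sided derivative of $m(\cdot,t)$ really does equal $\min_\alpha\{-\partial_y f(y_\alpha;x,t)\}$ rather than only the one-sided versions giving an upper/lower bound. Concretely, one must show that for small $\delta>0$ the minimizer of $f(\cdot;x+\delta,t)+g$ lies close to the minimizer set $A(x,t)$ and in fact converges to the component of $A(x,t)$ whose right-hand shift $\hat f'(\cdot-x)=g'(\cdot)$ is extremal; an upper-semicontinuity argument for the argmin correspondence, together with the fact that $y^\ast$ is an isolated ``governing'' point once $\hat f'$ is strictly monotone on a neighborhood (or a limiting argument if it is only nondecreasing), closes this gap. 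Everything else — the first-order conditions, the telescoping cancellation of the first and third terms exactly as in the proof of Theorem \ref{Thm 2.4}, and the mean value theorem step — is routine given the monotonicity of $\mathcal{L}'$.
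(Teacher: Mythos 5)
Your proposal follows essentially the same route as the paper: Theorem \ref{Thm 2.5} is not given a separate proof there but is justified exactly by the preceding case analysis (two minimizers, finitely many, a bounded infinite set, an interval of minimizers, and a countable union of closed intervals), combined with the first-order condition at each minimizer and the cancellation already carried out in the proof of Theorem \ref{Thm 2.4}. One small correction to your second step: since $f(y;x,t)=t\mathcal{L}\left(\frac{x-y}{t}\right)$ is convex in $y$, the relevant quantity $-\partial_{y}f(y;x,t)=\mathcal{L}^{\prime}\left(\frac{x-y}{t}\right)$ is non\emph{increasing} in $y$ (not nondecreasing as you wrote), and it is precisely this that forces its minimum over the minimizer set to be attained at the supremum $y^{\ast}$; as literally stated, ``nondecreasing'' would place the minimum at the infimum and contradict your (correct) conclusion.
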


\begin{remark}
The condition (\ref{Thm2.5min}) is not difficult to satisfy, as we simply need
$g$ to be well-defined on some interval where $\mathcal{L}$ is finite.
\end{remark}

\begin{remark}
Theorem \ref{Thm 2.5} improves upon the classic theorem, which requires
uniform convexity. By utilizing the concept of the greatest minimizer
$y^{\ast}$, we are able to deal with non-unique minimizers and obtain an
expression for the solution to the conservation law using only convexity and
not requiring uniform or strict convexity.
\end{remark}

\section{Existence of Solutions For Polygonal (Non-Smoothed)\ Flux}

We use the general theme of \cite{EV} and adapt the proofs to polygonal flux
(i.e., not smooth or superlinear). We define the Legendre transform without
the assumption of superlinearity on the flux function $H$. Although this
causes its Legendre transform $L$ to be infinite for certain points, one can
still perform computations and prove results close to those of the previous
section under these weaker assumptions, as $L$ is used in the context of
minimization problems..

The first matter is to make sure that we have the key theorem that $H$ and $L$
are Legendre transforms of one another. We do not need to use any of the
theorems that rely on superlinearity. We only assume that $L$ is Lipschitz
continuous, which follows from the definition of $H$. We also assume that
$g$ (the initial condition for the Hamilton-Jacobi equation) is Lipschitz on
specific finite intervals.

Throughout this section, we make the assumption that $H\left(  q\right)  $ is
polygonal convex with the line segments having slopes $m_{1}$ at the left and
$m_{N+1}$ at the right, with break points $c_{1}<c_{2}<...<c_{N}$, with
$c_{1}<0<c_{N}$. The Legendre transform, $L\left(  q\right)  $, defined below
is then also polygonal and convex on $\left[  m_{1},m_{N+1}\right]  $ and
infinite elsewhere. We will assume $m_{1}<0<m_{N+1}$. We illustrate this flux
function and some of the properties of the Legendre transform in Figure
\ref{PWF}.

\begin{definition}
We define the usual Legendre transform, denoted by $L\left(  p\right)  $, as
follows:%
\[
L\left(  p\right)  :=\sup_{q\in\mathbb{R}}\left\{  pq-H\left(  q\right)
\right\}
\]

\end{definition}

A computation shows that this is a convex polygonal shape such that $L\left(
p\right)  <\infty$ if and only if $p\in\left[  m_{1},m_{N}+1\right]  .$ It has
break points at $m_{1}<m_{2}<...<m_{N+1}$ and slopes $c_{1},c_{2},..,c_{N}$.
The last break point of $L$ is at $m_{N+1}$ where the slope and $L\left(
m_{N+1}\right)  $ become infinite. Note that $L$ is Lipschitz on $\left[
m_{1},m_{N+1}\right]  .$
\begin{figure}
[h]
\begin{center}
\includegraphics[width=\linewidth]{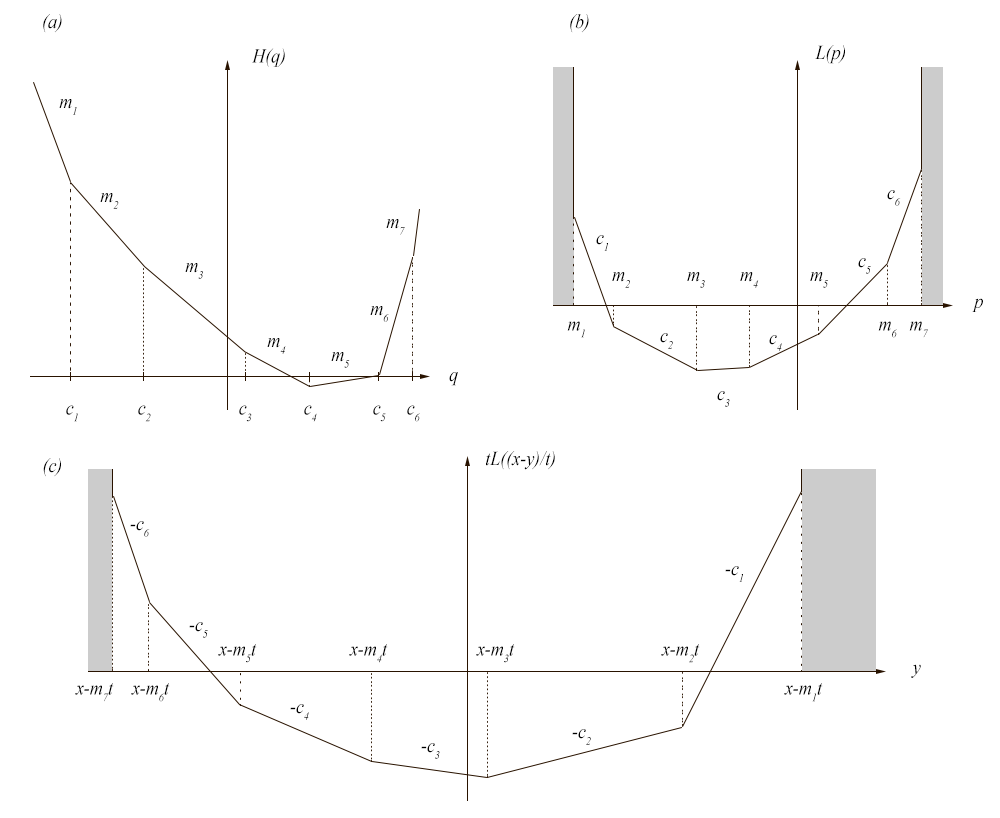}
\caption{(a)\ Illustration of the piecewise linear flux function $H\left(
q\right)  $; (b) The Legendre transform $L\left(  p\right)  $ of the function
$H\left(  q\right)  $; (c) The Legendre transform evaluated at a point as used
in the Hopf-Lax minimization problem. Note that the value of the Legendre
transform is infinite in the shaded regions; however, this pathology is
mitigated since one generally is interested in a minimization problem, which
limits the domain of this operation to a certain interval.}
\label{PWF}%
\end{center}
\end{figure}

\begin{lemma}
\label{Prop1}(Duality) Let $L\left(  p\right)  $ be as defined above (with
$L\left(  p\right)  <\infty$ iff $p\in\left[  m_{1},m_{N+1}\right]  $ ). Then
the Legendre transform $L\left(  p\right)  $ of $H\left(  p\right)  $ and the
flux function $H\left(  q\right)  $ itself satisfy the following duality
condition:%
\[
L^{\ast}\left(  q\right)  :=\sup_{p\in\mathbb{R}}\left\{  pq-L\left(
q\right)  \right\}  =H\left(  q\right)  .
\]

\end{lemma}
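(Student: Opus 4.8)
\emph{Proof proposal.} The plan is to prove the two inequalities $L^{\ast}\le H$ and $L^{\ast}\ge H$ separately, using only the Fenchel--Young inequality together with the elementary subgradient characterization of its equality case; in particular no superlinearity and no global finiteness of the transform is needed. For the first inequality, observe that by the definition of $L$ one has $L(p)\ge pq-H(q)$ for all $p,q\in\mathbb{R}$, hence $pq-L(p)\le H(q)$ for every $p$, and taking the supremum over $p$ gives $L^{\ast}(q)=\sup_{p\in\mathbb{R}}\{pq-L(p)\}\le H(q)$; in particular $L^{\ast}(q)$ is finite, and this direction never involves the region where $L=+\infty$.

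For the reverse inequality I would, for each fixed $q\in\mathbb{R}$, produce a single admissible $p^{\ast}$ for which $p^{\ast}q-L(p^{\ast})=H(q)$. Take $p^{\ast}$ to be a subgradient of the convex polygonal function $H$ at $q$: concretely $p^{\ast}=m_{i}$ when $q$ is interior to the $i$-th segment $[c_{i-1},c_{i}]$ of $H$ (with the evident conventions at the two unbounded ends), and $p^{\ast}$ any number in $[m_{i},m_{i+1}]$ when $q=c_{i}$ is a vertex. In every case $p^{\ast}\in[m_{1},m_{N+1}]$, which is exactly the set on which $L$ is finite, so $p^{\ast}$ is a legitimate competitor in the supremum defining $L^{\ast}(q)$. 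The subgradient inequality $H(r)\ge H(q)+p^{\ast}(r-q)$ for all $r$ rearranges to $p^{\ast}r-H(r)\le p^{\ast}q-H(q)$ for all $r$, so $L(p^{\ast})=\sup_{r}\{p^{\ast}r-H(r)\}=p^{\ast}q-H(q)$, and therefore $L^{\ast}(q)\ge p^{\ast}q-L(p^{\ast})=H(q)$. Combining the two inequalities yields $L^{\ast}=H$ pointwise on $\mathbb{R}$.

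The only place the polygonal structure is actually used — and the one point to be checked with a little care — is the assertion above that $H$ admits a subgradient $p^{\ast}$ lying in $[m_{1},m_{N+1}]$ at every $q$: this holds because $H$ is finite and convex on all of $\mathbb{R}$ and its slopes range over exactly $[m_{1},m_{N+1}]$, so $\partial H(q)$ is nonempty and contained in $[m_{1},m_{N+1}]$ for every $q$, and it is precisely this nonsmooth, bounded-slope geometry of $H$ (as opposed to any superlinearity) that makes the argument go through even though $L$ is $+\infty$ outside a bounded interval. As a concrete alternative consistent with the explicit description of $L$ recorded just above, one may instead note that for fixed $q$ the map $p\mapsto pq-L(p)$ is continuous, concave and piecewise linear on the compact interval $[m_{1},m_{N+1}]$ with one-sided slopes $q-c_{i}$ that strictly decrease in $i$; hence its maximum is attained at the vertex $m_{i}$ singled out by the position of $q$ among $c_{1}<\cdots<c_{N}$ (at the endpoint $m_{1}$ if $q\le c_{1}$ and at $m_{N+1}$ if $q\ge c_{N}$), and substituting $L(m_{i})=m_{i}c_{i}-H(c_{i})$ then recovers the affine formula for $H$ on its $i$-th segment, re-confirming along the way that $L$ is finite at $m_{N+1}$. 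One could also simply quote the Fenchel--Moreau biconjugation theorem, since $H$ is a proper, convex, lower semicontinuous function on $\mathbb{R}$, but the argument above keeps the section self-contained and exhibits the endpoint behavior explicitly.
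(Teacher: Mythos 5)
Your proposal is correct. Note, however, that the paper itself supplies no proof of this lemma: it states the duality, paraphrases it (``In other words\ldots''), and moves on, implicitly treating it as a routine computation or as an instance of the standard biconjugation theorem for proper convex lower semicontinuous functions. So there is no argument in the paper to compare against line by line; your write-up fills a genuine gap. Your two-inequality structure is the natural one: the direction $L^{\ast}\le H$ is pure Fenchel--Young and costs nothing, and the direction $L^{\ast}\ge H$ correctly isolates the only point where the hypotheses matter, namely that $\partial H(q)$ is nonempty and contained in $[m_{1},m_{N+1}]$ for every $q$, so that the maximizing $p^{\ast}$ lands in the finiteness set of $L$ and the equality case of Fenchel--Young applies. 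This is exactly the right substitute for superlinearity in this setting, and it is worth making explicit precisely because the paper's later lemmas repeatedly lean on ``the infinite values of $L$ are irrelevant in the minimization.'' Two small cosmetic points: the finiteness of $L^{\ast}(q)$ follows only after you have both bounds (the upper bound alone rules out $+\infty$, not $-\infty$), and in your concrete alternative the identity $L(m_{i})=m_{i}c_{i}-H(c_{i})$ should be read with the convention $L(m_{N+1})=m_{N+1}c_{N}-H(c_{N})$ at the right endpoint; neither affects the argument. You also silently correct the typo in the paper's display, where the supremand should be $pq-L(p)$ rather than $pq-L(q)$.
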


In other words, if we define $L\left(  p\right)  $ as polygonal, convex
function between $p\in\left[  m_{1},m_{N+1}\right]  $ as in Figure \ref{PWF},
with $L\left(  p\right)  =\infty$ for $p\not \in \left[  m_{1},m_{N+1}\right]
$ then the operation $\sup_{p\in\mathbb{R}}\left\{  pq-L\left(  q\right)
\right\}  $ yields the function $H$ defined above. One can then prove a set of
lemmas that are the analogs of those in Section 3.4 of \cite{EV}. The proofs
are adapted in order to handle potentially infinite values.

\begin{lemma}
\label{Lem A}Suppose $L$ is defined as above and $g$ is Lipschitz continuous
on bounded sets. Then $u$ defined by the Hopf-Lax formula is Lipschitz
continuous in $x,$ independently of $t.$ Moreover,%
\[
\left\vert u\left(  x+z,t\right)  -u\left(  x,t\right)  \right\vert \leq
Lip\left(  g\right)  \left\vert z\right\vert .
\]

\end{lemma}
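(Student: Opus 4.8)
The plan is to prove Lemma \ref{Lem A} by mimicking the classical Hopf-Lax Lipschitz argument (Evans, Section 3.3), but with care about the fact that $L$ is only finite on $[m_1, m_{N+1}]$. First I would fix $t > 0$ and points $x, \hat{x} \in \mathbb{R}$. Let $y$ be a point achieving (or nearly achieving) the infimum in the Hopf-Lax formula for $u(\hat{x},t)$; since $L$ is finite only on $[m_1,m_{N+1}]$, such a near-minimizer satisfies $(\hat{x}-y)/t \in [m_1,m_{N+1}]$, so the relevant $y$ lies in a bounded set once $x, \hat{x}, t$ are confined to bounded ranges, and $g$ is Lipschitz there by hypothesis. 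The key trick is: instead of using $y$ as a competitor for $u(x,t)$ directly, use the shifted point $y + (x - \hat{x})$, so that $x - (y + (x-\hat{x})) = \hat{x} - y$, keeping the argument of $L$ unchanged and hence its value finite. Then
\begin{align}
u(x,t) - u(\hat{x},t) &\le \left[ t L\!\left(\tfrac{x - (y + x - \hat{x})}{t}\right) + g(y + x - \hat{x}) \right] - \left[ t L\!\left(\tfrac{\hat{x}-y}{t}\right) + g(y) \right] \nonumber\\
&= g(y + x - \hat{x}) - g(y) \le \mathrm{Lip}(g)\, |x - \hat{x}|. \nonumber
\end{align}
By symmetry in $x$ and $\hat{x}$ one obtains $|u(x,t) - u(\hat{x},t)| \le \mathrm{Lip}(g)\,|x - \hat{x}|$, which is exactly the claimed bound, and it is manifestly independent of $t$.

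The main subtlety — and the step I expect to require the most care — is the well-posedness of the infimum and the existence (or adequate approximation) of a minimizer $y$ in the non-superlinear, $L$-infinite-valued setting. One must argue that the infimum defining $u(\hat{x},t)$ is attained: the competitor $y$ is constrained to the compact set $\{ y : \hat{x} - ty/t \in [m_1, m_{N+1}] \} = [\hat{x} - t\, m_{N+1}, \hat{x} - t\, m_1]$ (after sign bookkeeping), on which $t L((\hat{x}-y)/t) + g(y)$ is continuous (using that $L$ is continuous, indeed Lipschitz, on $[m_1,m_{N+1}]$, and $g$ is continuous), hence a minimizer exists by compactness. If one prefers to avoid attainment entirely, one can work with an $\varepsilon$-near-minimizer and let $\varepsilon \downarrow 0$ at the end; the shift trick goes through verbatim with an extra $\varepsilon$ that is then sent to zero. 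I would also note explicitly that the Lipschitz constant of $g$ invoked is the one on the bounded interval containing all relevant competitor points, which is legitimate since for the Lipschitz-in-$x$ conclusion we only ever compare $u$ at nearby points and hence only ever use $g$ on a bounded set; the hypothesis "$g$ Lipschitz on bounded sets" is exactly what is needed, and the global Lipschitz bound $\mathrm{Lip}(g)$ in the displayed inequality should be read as the local one (or one assumes, as the surrounding discussion suggests, that $g$ is globally Lipschitz on the intervals of interest).

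Finally I would remark that no use is made of smoothness, strict convexity, or superlinearity of $H$ — only that $L$ is convex and Lipschitz on its (compact) domain of finiteness, which was established in the computation following the definition of $L$ and recorded in Lemma \ref{Prop1}. This is why the classical argument carries over essentially unchanged; the only genuinely new ingredient is the observation that the horizontal shift preserves membership in $[m_1,m_{N+1}]$ and therefore never moves a competitor into the region where $L = \infty$.
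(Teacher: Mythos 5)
Your argument is correct and is essentially the same as the paper's: both choose a minimizer $y$ for one of the two points and then use the shifted competitor $z=\hat{x}-x+y$ (your $y+(x-\hat{x})$) so that the argument of $L$ is unchanged and its finiteness is preserved, reducing the difference to $g(z)-g(y)$ and concluding by symmetry. Your additional remarks on attainment of the minimum via compactness of $[\hat{x}-tm_{N+1},\hat{x}-tm_{1}]$ and on the local versus global reading of $\mathrm{Lip}(g)$ only make explicit points the paper treats briefly.
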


\begin{proof}
[Proof of Lemma \ref{Lem A}]Fix $t>0,$ $x,\hat{x}\in\mathbb{R}$ . Choose
$y\in\mathbb{R}$ (depending on $x,t$) such that
\begin{equation}
u\left(  x,t\right)  =\min_{z}\left\{  tL\left(  \frac{x-z}{t}\right)
+g\left(  z\right)  \right\}  =tL\left(  \frac{x-y}{t}\right)  +g\left(
y\right)  . \label{A1}%
\end{equation}
The minimum is attained since both $L$ and $g$ are continuous. Note that while
there may be values of $\left(  x-z\right)  /t$ such that $L\left(  \left(
x-z\right)  /t\right)  =\infty,$ these are irrelevant, as there are some
finite values, and $\frac{x-y}{t}$ will be one of those. Now use (\ref{A1}) to
write%
\begin{equation}
u\left(  \hat{x},t\right)  -u\left(  x,t\right)  =\inf_{\tilde{z}}\left\{
tL\left(  \frac{\hat{x}-\tilde{z}}{t}\right)  +g\left(  \tilde{z}\right)
\right\}  -tL\left(  \frac{x-y}{t}\right)  -g\left(  y\right)  \ . \label{A2}%
\end{equation}

We define $z:=\hat{x}-x+y$ such that
\begin{equation}
\frac{x-y}{t}=\frac{\hat{x}-z}{t} \label{A3}%
\end{equation}
and substitute this $z$ in place of $\tilde{z}$ in (\ref{A2}) which can only
increase the RHS. This yields the inequality%
\begin{align}
u\left(  \hat{x},t\right)  -u\left(  x,t\right)   &  \leq tL\left(  \frac
{\hat{x}-z}{t}\right)  +g\left(  z\right)  -tL\left(  \frac{x-y}{t}\right)
-g\left(  y\right) \nonumber\\
&  =tL\left(  \frac{x-y}{t}\right)  +g\left(  \hat{x}-x+y\right)  \ -tL\left(
\frac{x-y}{t}\right)  -g\left(  y\right) \nonumber\\
&  =g\left(  \hat{x}-x+y\right)  -g\left(  y\right)  .
\end{align}
Using the assumption that $g$ is Lipschitz on bounded sets, one has
\begin{equation}
u\left(  \hat{x},t\right)  -u\left(  x,t\right)  \leq Lip\left(  g\right)
\cdot\left\vert \hat{x}-x\right\vert \ . \label{A4}%
\end{equation}

Note that in obtaining this inequality, $x$ and $\hat{x}$ were arbitrary
(without any assumption on order). Hence, we can interchange them. I.e., we
start by defining $y$ such that, instead of (\ref{A1}), it satisfies
\begin{equation}
u\left(  \hat{x},t\right)  =tL\left(  \frac{\hat{x}-y}{t}\right)  +g\left(
y\right)  .
\end{equation}
Thus, we obtain the same inequality as (\ref{A4}) with the $x$ and $\hat{x}$
interchanged, yielding%
\begin{equation}
\left\vert u\left(  \hat{x},t\right)  -u\left(  x,t\right)  \right\vert \leq
Lip\left(  g\right)  \cdot\left\vert \hat{x}-x\right\vert \ .\ \label{A5}%
\end{equation}

\end{proof}

\bigskip

\begin{lemma}
\label{Lem 1}Suppose $L$ is defined as above and $g$ is Lipschitz continuous
on bounded sets$.$ For each $x\in\mathbb{R}$ and $0\leq s<t$ we have
\[
u\left(  x,t\right)  =\min_{y\in\mathbb{R}}\left\{  \left(  t-s\right)
L\left(  \frac{x-y}{t-s}\right)  +u\left(  y,s\right)  \right\}  .
\]

\end{lemma}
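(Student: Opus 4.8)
The plan is to prove the semigroup (Bellman) property for the Hopf-Lax formula by establishing two inequalities, exactly as in the classical argument (Evans, \S3.3), but taking care that the Legendre transform $L$ may be $+\infty$ outside $[m_1,m_{N+1}]$. Write $v(x,t) := \min_{y}\{(t-s)L(\frac{x-y}{t-s}) + u(y,s)\}$; the goal is $v(x,t) = u(x,t)$. Throughout I will use that all minima are attained because $L$ and $g$ (hence $u(\cdot,s)$, by Lemma \ref{Lem A}) are continuous, and that only finite values of $L$ are ever relevant in a minimization, so the $+\infty$ values can simply be ignored.

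For the inequality $v(x,t) \le u(x,t)$: fix $x$ and choose $y$ achieving $u(x,t) = tL(\frac{x-y}{t}) + g(y)$. Set $z := \frac{s}{t}x + (1-\frac{s}{t})y$, the point on the segment from $y$ to $x$ at ``time'' $s$, so that $\frac{x-z}{t-s} = \frac{z-y}{s} = \frac{x-y}{t}$. Then $u(z,s) \le sL(\frac{z-y}{s}) + g(y) = sL(\frac{x-y}{t}) + g(y)$ (using the definition of $u(\cdot,s)$ as an $\inf$, with $y$ as the candidate), and therefore
\[
v(x,t) \le (t-s)L\Bigl(\tfrac{x-z}{t-s}\Bigr) + u(z,s) \le (t-s)L\Bigl(\tfrac{x-y}{t}\Bigr) + sL\Bigl(\tfrac{x-y}{t}\Bigr) + g(y) = tL\Bigl(\tfrac{x-y}{t}\Bigr) + g(y) = u(x,t).
\]
Note $\frac{x-y}{t}$ is a point where $L$ is finite (since it was optimal for $u(x,t)$), so every term here is finite and the chain is legitimate.

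For the reverse inequality $v(x,t) \ge u(x,t)$: choose $y$ achieving $v(x,t) = (t-s)L(\frac{x-y}{t-s}) + u(y,s)$, and then choose $w$ achieving $u(y,s) = sL(\frac{y-w}{s}) + g(w)$. By the convexity of $L$ applied to the convex combination $\frac{x-w}{t} = \frac{t-s}{t}\cdot\frac{x-y}{t-s} + \frac{s}{t}\cdot\frac{y-w}{s}$, we get $L(\frac{x-w}{t}) \le \frac{t-s}{t}L(\frac{x-y}{t-s}) + \frac{s}{t}L(\frac{y-w}{s})$, hence $tL(\frac{x-w}{t}) + g(w) \le (t-s)L(\frac{x-y}{t-s}) + sL(\frac{y-w}{s}) + g(w) = v(x,t)$. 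Since $u(x,t) \le tL(\frac{x-w}{t}) + g(w)$ by definition, this gives $u(x,t) \le v(x,t)$, completing the proof.

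The only place requiring genuine care — the ``main obstacle'' such as it is — is the bookkeeping around the infinite values of $L$: one must confirm that each intermediate argument $\frac{x-y}{t}$, $\frac{x-z}{t-s}$, $\frac{z-y}{s}$, $\frac{x-w}{t}$, $\frac{y-w}{s}$, $\frac{x-y}{t-s}$ that appears lies in $[m_1,m_{N+1}]$ whenever the corresponding term enters a chain of inequalities we actually use. This is automatic: in the $\le$ direction the relevant slope equals $\frac{x-y}{t}$, which is finite because $y$ was an optimal point for $u(x,t)$; in the $\ge$ direction the slopes $\frac{x-y}{t-s}$ and $\frac{y-w}{s}$ are finite because $y$ and $w$ were optimal for $v(x,t)$ and $u(y,s)$ respectively, and then the convex combination $\frac{x-w}{t}$ lies between them and so is also in $[m_1,m_{N+1}]$ by convexity of the interval. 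Everything else is the standard two-inequality Hopf-Lax semigroup argument, using only continuity (for attainment of minima) and convexity of $L$ (for the reverse inequality), both of which hold here.
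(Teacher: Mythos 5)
Your proof is correct and follows essentially the same two-inequality argument as the paper: the upper bound on $u(x,t)$ via convexity of $L$ applied to the three-slope convex combination, and the lower bound by placing the intermediate point on the segment so that all three difference quotients coincide, with the same bookkeeping that only finite values of $L$ ever enter the chains. The only cosmetic difference is that in the convexity direction you specialize immediately to the minimizer of $v$, whereas the paper proves the pointwise inequality for arbitrary $y$ and then takes the minimum; both are fine since attainment is guaranteed by continuity on the compact set where $L$ is finite.
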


\begin{proof}
[Proof of Lemma \ref{Lem 1}]A. Fix $y\in\mathbb{R}$, $0<s<t.$ Since $u$ and
$L$ are continuous, the minimum is attained on the interval $\left[
m_{1},m_{N+1}\right]  $ where $L$ is finite. Thus we can find $z\in\mathbb{R}$
such that
\begin{equation}
u\left(  y,s\right)  =sL\left(  \frac{y-z}{s}\right)  +g\left(  z\right)  .
\label{L11}%
\end{equation}
Note that since $z$ is the minimizer of $L\left(  \left(  y-z\right)
/s\right)  $, we know that $L\left(  \left(  y-z\right)  /s\right)  $ is finite.

By convexity of $L$ we can write%
\[
\frac{x-z}{t}=\left(  1-\frac{s}{t}\right)  \left(  \frac{x-y}{t-s}\right)
+\frac{s}{t}\left(  \frac{y-z}{s}\right)
\]%
\begin{equation}
L\left(  \frac{x-z}{t}\right)  \leq\left(  1-\frac{s}{t}\right)  L\left(
\frac{x-y}{t-s}\right)  +\frac{s}{t}L\left(  \frac{y-z}{s}\right)  .
\label{L12}%
\end{equation}
Next, we have from our basic assumption that $u\left(  x,t\right)  $ is
defined by the Hopf-Lax formula, the identity%
\begin{equation}
u\left(  x,t\right)  =\min_{\hat{z}}\left\{  tL\left(  \frac{x-\hat{z}}%
{t}\right)  +g\left(  \hat{z}\right)  \right\}
\end{equation}
so substituting the $z$ defined above in (\ref{L11}) yields the inequality%
\begin{equation}
u\left(  x,t\right)  \leq tL\left(  \frac{x-z}{t}\right)  +g\left(  z\right)
\end{equation}
and now using (\ref{L12}) yields%
\begin{equation}
u\left(  x,t\right)  \leq\left(  t-s\right)  L\left(  \frac{x-y}{t-s}\right)
+sL\left(  \frac{y-z}{s}\right)  +g\left(  z\right)  . \label{L13}%
\end{equation}
Now note that the last two terms, by (\ref{L11}) are $u\left(  y,s\right)  .$
This yields%
\begin{equation}
u\left(  x,t\right)  \leq\left(  t-s\right)  L\left(  \frac{x-y}{t-s}\right)
+u\left(  y,s\right)  . \label{L13'}%
\end{equation}

Note that $y$ \ has been arbitrary. Now we take the minimum over all
$y\in\mathbb{R}$. We note that there are values of $y$ for which the right
hand side of (\ref{L13'}) is infinite, but given $0<s<t$ and $x\in\mathbb{R}$,
there will be some $y\in\mathbb{R}$ such that $\left(  x-y\right)  /\left(
t-s\right)  $ falls in the finite range of $L.$ Thus, in taking the minimum,
the values for which it is infinite are irrelevant, and we have%
\begin{equation}
u\left(  x,t\right)  \leq\min_{y\in\mathbb{R}}\left\{  \left(  t-s\right)
L\left(  \frac{x-y}{t-s}\right)  +u\left(  y,s\right)  \right\}  . \label{L14}%
\end{equation}

B. Next, we know again that there exists $w\in\mathbb{R}$ (depending on $x$
and $t$ that we regard as fixed) such that
\begin{equation}
u\left(  x,t\right)  =tL\left(  \frac{x-w}{t}\right)  +g\left(  w\right)  .
\label{L15}%
\end{equation}
We choose $y:=\frac{s}{t}x+\left(  1-\frac{s}{t}\right)  w,$ which implies%
\begin{equation}
\frac{x-y}{t-s}=\frac{x-w}{t}=\frac{y-w}{s}. \label{L16}%
\end{equation}

We know that $w$ is the minimizer (and of course, $g$ is finite in the domain
$\left[  m_{1},m_{N}\right]  $) so that $L\left(  \left(  x-w\right)
/t\right)  $ is finite. Thus, by the identity (\ref{L16}) above, so are
$L\left(  \left(  x-w\right)  /t\right)  $ and $L\left(  \left(  y-w\right)
/s\right)  .$ Thus, using the basic definition of $u\left(  y,s\right)  $ in
the equality, one has
\begin{align}
\left(  t-s\right)  L\left(  \frac{x-y}{t-s}\right)  +u\left(  y,s\right)   &
=\left(  t-s\right)  L\left(  \frac{x-y}{t-s}\right)  +\min_{\hat{z}}\left\{
sL\left(  \frac{y-\hat{z}}{t}\right)  +g\left(  \hat{z}\right)  \right\}
\nonumber\\
&  \leq\left(  t-s\right)  L\left(  \frac{x-y}{t-s}\right)  +\left\{
sL\left(  \frac{y-w}{t}\right)  +g\left(  w\right)  \right\}  \label{L17}%
\end{align}
where the inequality is obtained simply by substituting a particular value for
$\hat{z},$ namely the $w$ that we defined above in (\ref{L15}).

We can use (\ref{L16}) in order to re-write the arguments of $L$ in equivalent
forms. By the equality and the fact that $L\left(  \left(  x-w\right)
/t\right)  $ is finite, so are $L\left(  \left(  x-y\right)  /\left(
t-s\right)  \right)  $ and $L\left(  \left(  y-w\right)  /s\right)  .$ Hence,
replacing the two expressions involving $L$ on the RHS of (\ref{L12}) yields%
\begin{align}
\left(  t-s\right)  L\left(  \frac{x-y}{t-s}\right)  +u\left(  y,s\right)   &
\leq\left(  t-s\right)  L\left(  \frac{x-w}{t}\right)  +\left\{  sL\left(
\frac{x-w}{t}\right)  +g\left(  w\right)  \right\} \nonumber\\
&  =tL\left(  \frac{x-w}{t}\right)  +g\left(  w\right)  =u\left(  x,t\right)
\label{L18}%
\end{align}
where the last identity follows from the expression (\ref{L15}) that defines
$w.$ Thus (\ref{L18}) gives us an identity for a particular $y$ that we
defined, namely%
\begin{equation}
\left(  t-s\right)  L\left(  \frac{x-y}{t-s}\right)  +u\left(  y,s\right)
\leq u\left(  x,t\right)  . \label{L19}%
\end{equation}
If we replace $y$ by the minimum over all $\hat{y}$ we obtain the inequality%
\begin{equation}
\min_{\hat{y}\in\mathbb{R}}\left\{  \left(  t-s\right)  L\left(  \frac
{x-y}{t-s}\right)  +u\left(  y,s\right)  \right\}  \leq u\left(  x,t\right)  .
\label{L110}%
\end{equation}
Combining (\ref{L14}) and (\ref{L110}) proves Lemma \ref{Lem 1}.
\end{proof}

\bigskip

\begin{lemma}
\label{Lem B} If $L$ and $g$ are Lipschitz continuous, one has $u\left(
x,0\right)  =g\left(  x\right)  .$
\end{lemma}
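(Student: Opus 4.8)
The plan is to show that $u(x,0^+)$, interpreted as $\lim_{t\downarrow 0}u(x,t)$ (and in fact the Hopf–Lax formula extended to $t=0$), recovers $g(x)$. The natural approach is the standard two-sided squeeze: produce an upper bound $u(x,t)\le g(x)+o(1)$ by plugging the trial point $y=x$ into the minimization, and a matching lower bound $u(x,t)\ge g(x)-o(1)$ using the Lipschitz continuity of $g$ together with a lower bound on $L$. First I would write $u(x,t)=\min_{y}\{tL(\tfrac{x-y}{t})+g(y)\}$ and choose $y=x$, so that $\tfrac{x-y}{t}=0$; since $0\in[m_1,m_{N+1}]$ (we assumed $m_1<0<m_{N+1}$), $L(0)$ is finite, giving $u(x,t)\le tL(0)+g(x)$, hence $\limsup_{t\downarrow 0}u(x,t)\le g(x)$.

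For the lower bound, I would take any $y$ achieving (or nearly achieving) the minimum and note that the minimizing $y$ satisfies $\tfrac{x-y}{t}\in[m_1,m_{N+1}]$ since $L$ is infinite outside that interval; hence $|x-y|\le \max(|m_1|,|m_{N+1}|)\,t=:Mt$. Then, using that $L$ is polygonal and convex, it is bounded below on $[m_1,m_{N+1}]$ by an affine function, in fact $L(p)\ge -C_0$ for some constant $C_0$ (more precisely one can use $L(p)\ge L(0)+c_\ast p$ where $c_\ast$ is a subgradient of $L$ at $0$, i.e.\ a break-slope value), so $tL(\tfrac{x-y}{t})\ge -C_0 t$. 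Combining with the Lipschitz bound on $g$ on the bounded set $[x-Mt,x+Mt]$,
\[
u(x,t)=tL\!\left(\frac{x-y}{t}\right)+g(y)\ge -C_0 t+g(x)-\mathrm{Lip}(g)\,|x-y|\ge g(x)-C_0 t-\mathrm{Lip}(g)\,Mt .
\]
Letting $t\downarrow 0$ yields $\liminf_{t\downarrow 0}u(x,t)\ge g(x)$, and together with the upper bound this gives $u(x,0)=g(x)$.

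The main obstacle is purely bookkeeping around the infinite values of $L$: one must argue at each step that the relevant arguments $\tfrac{x-y}{t}$ land in $[m_1,m_{N+1}]$ so that every quantity written is finite, and one must confirm the minimum is actually attained (which follows, as in Lemma \ref{Lem A} and Lemma \ref{Lem 1}, from continuity of $L$ and $g$ on the compact effective domain together with coercivity of $g$ off bounded sets, or simply from the finite-slope structure forcing $|x-y|\le Mt$). A secondary, minor point is clarifying that "$u(x,0)=g(x)$" means the Hopf–Lax formula extends continuously down to $t=0$; the $|x-y|\le Mt\to 0$ estimate makes this transparent. No strict or uniform convexity of $H$ is needed, only that $0$ lies in the interior of $[m_1,m_{N+1}]$ and the Lipschitz hypotheses on $L$ and $g$.
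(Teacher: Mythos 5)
Your proposal is correct and follows essentially the same two-sided squeeze as the paper: the upper bound via the trial point $y=x$ (using that $0\in[m_1,m_{N+1}]$ so $L(0)$ is finite), and the lower bound via the Lipschitz estimate $g(y)\ge g(x)-\mathrm{Lip}(g)\,|x-y|$ together with the fact that $L=\infty$ outside $[m_1,m_{N+1}]$. The only cosmetic difference is that you bound $|x-y|\le Mt$ and $L\ge -C_0$ separately, whereas the paper substitutes $z=(x-y)/t$ and absorbs both into the single finite constant $\max_{z}\{|z|\,\mathrm{Lip}(g)-L(z)\}$; both yield the same linear-in-$t$ estimate $|u(x,t)-g(x)|\le Ct$.
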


Note that this is the analog of Lemma 2 - proof in part 2 of Evans. Part 2 is
essentially the same; one needs only pay attention to finiteness of the terms.

\begin{proof}
[Proof of Lemma \ref{Lem B}]Since $0\in\left[  m_{1},m_{N+1}\right]  $, the
interval on which $L$ is finite, one has
\begin{equation}
u\left(  x,t\right)  =\min_{y\in\mathbb{R}}\left\{  tL\left(  \frac{x-y}%
{t}\right)  +g\left(  y\right)  \right\}  \leq tL\left(  0\right)  +g\left(
x\right)  , \label{B1}%
\end{equation}
upon choosing $y=x.$ Also,%
\begin{align}
u\left(  x,t\right)   &  =\min_{y\in\mathbb{R}}\left\{  tL\left(  \frac
{x-y}{t}\right)  +g\left(  y\right)  \right\} \nonumber\\
&  \geq\min_{y\in\mathbb{R}}\left\{  tL\left(  \frac{x-y}{t}\right)  +g\left(
x\right)  -Lip\left(  g\right)  \cdot\left\vert x-y\right\vert \right\}
\nonumber\\
&  =g\left(  x\right)  +\min_{y\in\mathbb{R}}\left\{  tL\left(  \frac{x-y}%
{t}\right)  -Lip\left(  g\right)  \cdot\left\vert x-y\right\vert \right\}
\nonumber\\
&  =g\left(  x\right)  -t\max_{y\in\mathbb{R}}\left\{  Lip\left(  g\right)
\cdot\frac{\left\vert x-y\right\vert }{t}-L\left(  \frac{x-y}{t}\right)
\right\} \nonumber\\
&  =g\left(  x\right)  -t\max_{z\in\mathbb{R}}\left\{  \left\vert z\right\vert
Lip\left(  g\right)  -L\left(  z\right)  \right\}  . \label{B2}%
\end{align}
Note that $\max_{z\in\mathbb{R}}\left\{  \left\vert z\right\vert Lip\left(
g\right)  -L\left(  z\right)  \right\}  $ is a finite number since $-L\left(
z\right)  $ is bounded above and is $-\infty$ outside of the range $\left[
m_{1},m_{N}+1\right]  $. Thus we define%
\begin{equation}
C:=\max\left\{  \left\vert L\left(  0\right)  \right\vert ,\max_{z\in
\mathbb{R}}\left\{  \left\vert z\right\vert Lip\left(  g\right)  -L\left(
z\right)  \right\}  \right\}
\end{equation}
and combine (\ref{B1}) and (\ref{B2}) to write%
\begin{equation}
\left\vert u\left(  x,t\right)  -g\left(  x\right)  \right\vert \leq
Ct\ \text{\ for all }x\in\mathbb{R}\text{ and }t>0.
\end{equation}

\end{proof}

\bigskip

\begin{lemma}
\label{Lem C}(a) If $L$ and $g$ are Lipschitz, one has for any $x\in
\mathbb{R}$ and $0<\hat{t}<t$ the inequalities%
\begin{equation}
\left\vert u\left(  x,t\right)  -u\left(  x,\hat{t}\right)  \right\vert \leq
C\left\vert t-\hat{t}\right\vert
\end{equation}
\begin{equation}
C:=\max\left\{  \left\vert L\left(  0\right)  \right\vert ,\ \ \max
_{z\in\mathbb{R}}\left\{  \left\vert z\right\vert Lip\left(  g\right)
-L\left(  z\right)  \right\}  \right\}  .
\end{equation}
(b) Under the conditions of Lemmas \ref{Lem A} and \ref{Lem C} one has for
some $C$%
\begin{equation}
\left\vert u\left(  \hat{x},\hat{t}\right)  -u\left(  x,t\right)  \right\vert
\leq C\left\Vert \left(  \hat{x},\hat{t}\right)  -\left(  x,t\right)
\right\Vert _{2}%
\end{equation}
where $\left\Vert \cdot\right\Vert _{2}$ is the usual Euclidean norm.

(c) If $L$ and $g$ are Lipschitz continuous then $u:\mathbb{R}^{2}%
\rightarrow\mathbb{R}$ is differentiable on $\mathbb{R\times}\left(
0,\infty\right) a.e.$
\end{lemma}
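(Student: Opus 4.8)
The plan is to treat the three parts in order, using the dynamic-programming identity of Lemma \ref{Lem 1} together with the spatial Lipschitz bound of Lemma \ref{Lem A} for parts (a) and (b), and then appealing to Rademacher's theorem for part (c). This is the polygonal analog of the time-regularity estimate in Section 3.4 of \cite{EV}, with the usual caveat that one must keep track of where $L=+\infty$.

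For part (a), fix $x\in\mathbb{R}$ and $0<\hat{t}<t$, and apply Lemma \ref{Lem 1} with $s=\hat{t}$ to write $u(x,t)=\min_{y}\{(t-\hat{t})L(\tfrac{x-y}{t-\hat{t}})+u(y,\hat{t})\}$. Choosing $y=x$ is legitimate because $0\in[m_1,m_{N+1}]$, so $L(0)$ is finite; this yields the one-sided bound $u(x,t)-u(x,\hat{t})\le(t-\hat{t})L(0)\le C|t-\hat{t}|$. For the reverse bound, I would invoke Lemma \ref{Lem A} to replace $u(y,\hat{t})$ by $u(x,\hat{t})-Lip(g)\,|x-y|$ inside the minimum (which can only decrease it), factor out the constant $u(x,\hat{t})$, and substitute $z=(x-y)/(t-\hat{t})$ so that $|x-y|=(t-\hat{t})|z|$, obtaining $u(x,t)-u(x,\hat{t})\ge-(t-\hat{t})\max_{z}\{|z|Lip(g)-L(z)\}\ge-C|t-\hat{t}|$. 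Combining the two estimates with the stated definition of $C$ gives $|u(x,t)-u(x,\hat{t})|\le C|t-\hat{t}|$.

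For part (b), I would combine the spatial estimate $|u(\hat{x},\hat{t})-u(x,\hat{t})|\le Lip(g)\,|\hat{x}-x|$ from Lemma \ref{Lem A} with the temporal estimate from part (a) via the triangle inequality, giving $|u(\hat{x},\hat{t})-u(x,t)|\le Lip(g)\,|\hat{x}-x|+C|\hat{t}-t|$, and then absorb this into a single Euclidean bound with a relabeled constant using $|a|+|b|\le\sqrt{2}\,\|(a,b)\|_2$. For part (c), I would observe that $\mathbb{R}\times(0,\infty)$ is open and that, by part (b), $u$ is Lipschitz on it with a constant independent of the base point; Rademacher's theorem then yields differentiability of $u$ at Lebesgue-almost every point of $\mathbb{R}\times(0,\infty)$.

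The steps are all essentially routine; the only point requiring care — as elsewhere in this section — is the bookkeeping about where $L=+\infty$, so that every minimum and maximum appearing above is in fact attained over the compact interval $[m_1,m_{N+1}]$ on which $L$ is finite and Lipschitz. This is exactly what guarantees that $L(0)$ and $\max_{z}\{|z|Lip(g)-L(z)\}$ are finite and hence that the constant $C$ is well-defined, and I expect this (minor) verification to be the only thing to watch rather than any genuine analytic obstacle.
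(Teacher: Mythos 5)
Your proposal is correct and follows essentially the same route as the paper: Lemma \ref{Lem 1} with $s=\hat t$, the choice $y=x$ for the upper bound, the Lipschitz replacement of $u(y,\hat t)$ via Lemma \ref{Lem A} plus the substitution $z=(x-y)/(t-\hat t)$ for the lower bound, then the triangle inequality for (b) and Rademacher for (c). The attention you give to the finiteness of $L(0)$ and of $\max_z\{|z|\,Lip(g)-L(z)\}$ (since $-L=-\infty$ outside $[m_1,m_{N+1}]$) matches the paper's bookkeeping exactly.
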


\begin{proof}
[Proof of Lemma \ref{Lem C}](a) Let $x\in\mathbb{R}$ and $0<\hat{t}<t.$ By
Lemma \ref{Lem A} one has%
\begin{equation}
\left\vert u\left(  x,t\right)  -u\left(  \hat{x},t\right)  \right\vert \leq
Lip\left(  g\right)  \left\vert x-\hat{x}\right\vert \ .
\end{equation}
From Lemma \ref{Lem 1}, we have for $0\leq s=\hat{t}<t,$ the inequality%
\begin{align}
u\left(  x,t\right)   &  =\min_{y}\left\{  \left(  t-s\right)  L\left(
\frac{x-y}{t-s}\right)  +u\left(  y,s\right)  \right\} \nonumber\\
&  \geq\min_{y}\left\{  \left(  t-s\right)  L\left(  \frac{x-y}{t-s}\right)
+u\left(  x,s\right)  -Lip\left(  u\right)  \cdot\left\vert x-y\right\vert
\right\} \nonumber\\
&  =u\left(  x,s\right)  +\min_{y}\left\{  \left(  t-s\right)  L\left(
\frac{x-y}{t-s}\right)  -Lip\left(  g\right)  \cdot\left\vert x-y\right\vert
\right\} \nonumber\\
&  =u\left(  x,s\right)  +\left(  t-s\right)  \min_{y}\left\{  L\left(
\frac{x-y}{t-s}\right)  -Lip\left(  g\right)  \cdot\frac{\left\vert
x-y\right\vert }{t-s}\right\} \nonumber\\
&  =u\left(  x,s\right)  +\left(  t-s\right)  \min_{z}\left\{  L\left(
z\right)  -Lip\left(  g\right)  \cdot\left\vert z\right\vert \right\}
\label{C1}%
\end{align}
where we have just defined $z:=\left\vert x-y\right\vert /\left(  t-s\right)
.$ We can also write this as%
\begin{equation}
u\left(  x,t\right)  \geq u\left(  x,s\right)  -\left(  t-s\right)  \max
_{z}\left\{  Lip\left(  g\right)  \cdot\left\vert z\right\vert -L\left(
z\right)  \right\}  .
\end{equation}
Using $C_{1}:=\max_{z}\left\{  Lip\left(  g\right)  \cdot\left\vert
z\right\vert -L\left(  z\right)  \right\}  $ which, as discussed above, is
clearly finite, one has then%
\begin{equation}
u\left(  x,t\right)  -u\left(  x,s\right)  \geq-C_{1}\left(  t-s\right)  .
\end{equation}
The other direction in the inequality follows from Lemma \ref{Lem 1}
directly. Substituting $x$ in place of the $y$ in the minimizer, we have%
\begin{align}
u\left(  x,t\right)   &  =\min_{y}\left\{  \left(  t-s\right)  L\left(
\frac{x-y}{t-s}\right)  +u\left(  y,s\right)  \right\} \nonumber\\
&  \leq\left(  t-s\right)  L\left(  \frac{x-x}{t-s}\right)  +u\left(
x,s\right) \nonumber\\
&  =\left(  t-s\right)  L\left(  0\right)  +u\left(  x,s\right)  \label{C2}%
\end{align}
yielding the inequality%
\begin{equation}
u\left(  x,t\right)  -u\left(  x,s\right)  \leq\left(  t-s\right)  L\left(
0\right)  . \label{C3}%
\end{equation}

Combining (\ref{C2}) and (\ref{C3}) yields the Lipschitz inequality in $t,$
namely,%
\begin{equation}
\left\vert u\left(  x,t\right)  -u\left(  x,s\right)  \right\vert \leq
C\left\vert t-s\right\vert \ . \label{C4}%
\end{equation}

(b) This follows from the triangle inequality, Lemma \ref{Lem A} and part (a).

(c) This follows from Rademacher's theorem and part (b).
\end{proof}

Analogous to theorems in Section 3.3, \cite{EV}, we have the following three
theorems. The key idea here is that our versions allow one to deal with the
introduction of potentially infinite values of the Legendre transform of the
flux function.

\begin{theorem}
\label{Thm 5}Let $x\in\mathbb{R}$ and $t>0$. Let $u$ be defined by the
Hopf-Lax formula and differentiable at $\left(  x,t\right)  \in\mathbb{R\times
}\left(  0,\infty\right)  .$ Then
\[
u_{t}\left(  x,t\right)  +H\left(  u_{x}\left(  x,t\right)  \right)  =0.
\]

\end{theorem}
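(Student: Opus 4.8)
The plan is to prove the two inequalities $u_{t}(x,t)+H(u_{x}(x,t))\le 0$ and $u_{t}(x,t)+H(u_{x}(x,t))\ge 0$ separately, following the classical scheme (cf.\ Section 3.3 of \cite{EV}) but keeping careful track of the fact that $L$ takes the value $+\infty$ outside $[m_{1},m_{N+1}]$. Both halves rest on the dynamic programming identity of Lemma \ref{Lem 1} together with the duality $L^{*}=H$ of Lemma \ref{Prop1}; the duality lemma is precisely what converts the scalar minimizations produced by Lemma \ref{Lem 1} into a statement about $H$. Throughout, the only analytic input beyond these two lemmas is that $u$ is differentiable at the single point $(x,t)$, which lets us expand $u(x+a,t+b)=u(x,t)+a\,u_{x}(x,t)+b\,u_{t}(x,t)+o(\sqrt{a^{2}+b^{2}})$.

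For the upper inequality, fix $v\in[m_{1},m_{N+1}]$ so that $L(v)<\infty$, let $h\in(0,t)$, and apply Lemma \ref{Lem 1} with $s=t-h$ using the admissible point $y=x-hv$:
\[
u(x,t)\le h\,L(v)+u(x-hv,\,t-h).
\]
Rearranging, dividing by $h$, and letting $h\downarrow 0$ while using differentiability at $(x,t)$ gives $v\,u_{x}(x,t)+u_{t}(x,t)\le L(v)$. This holds for every $v$ in the finite range of $L$, hence (trivially, since $L=+\infty$ there) for every $v\in\mathbb{R}$, so $u_{t}(x,t)\le \inf_{v}\{L(v)-v\,u_{x}(x,t)\}=-\sup_{v}\{v\,u_{x}(x,t)-L(v)\}$. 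By Lemma \ref{Prop1} the last supremum is $L^{*}(u_{x}(x,t))=H(u_{x}(x,t))$, giving $u_{t}(x,t)+H(u_{x}(x,t))\le 0$.

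For the lower inequality, let $z$ be a minimizer in the Hopf-Lax formula at $(x,t)$, so $u(x,t)=tL\!\left(\tfrac{x-z}{t}\right)+g(z)$ and, as already noted inside the proof of Lemma \ref{Lem 1}, the slope $v_{0}:=\tfrac{x-z}{t}$ lies where $L$ is finite. For $h\in(0,t)$ put $s=t-h$ and choose the interpolation point $y:=\left(1-\tfrac{h}{t}\right)x+\tfrac{h}{t}z$, so that $\tfrac{x-y}{t-s}=\tfrac{y-z}{s}=v_{0}$ and $x-y=h\,v_{0}$. Since $y$ is merely a candidate in the minimization defining $u(y,s)$, one gets $u(y,s)\le s\,L(v_{0})+g(z)$, whence
\[
u(x,t)-u(y,s)\ge tL(v_{0})+g(z)-sL(v_{0})-g(z)=h\,L(v_{0}).
\]
Dividing by $h$, using $y=x-hv_{0}$, $s=t-h$, and differentiability at $(x,t)$, and letting $h\downarrow 0$ yields $v_{0}\,u_{x}(x,t)+u_{t}(x,t)\ge L(v_{0})$, hence $u_{t}(x,t)\ge L(v_{0})-v_{0}\,u_{x}(x,t)\ge -H(u_{x}(x,t))$, the last step because $v_{0}\,u_{x}(x,t)-L(v_{0})\le\sup_{v}\{v\,u_{x}(x,t)-L(v)\}=H(u_{x}(x,t))$ by Lemma \ref{Prop1}. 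Combining the two halves gives the claim.

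The only place where the polygonal (non-superlinear) setting genuinely differs from the smooth one is the bookkeeping around the infinite values of $L$: in the first half one must restrict the test directions $v$ to $[m_{1},m_{N+1}]$ before passing to the limit, since outside that interval the chosen point sends the Hopf-Lax expression to $+\infty$ and conveys nothing, and one then recovers the \emph{full} Legendre supremum $L^{*}(u_{x})=H(u_{x})$ from these finite directions alone; in the second half one needs that the minimizer $z$ produces a slope $(x-z)/t$ at which $L$ is finite, which is exactly the observation used inside Lemma \ref{Lem 1}. I expect this finiteness bookkeeping — and making sure Lemma \ref{Prop1} is being invoked with the correct (finite) domain — to be the main point requiring care; the limits $h\downarrow 0$ and the control of the $o(h)$ terms are routine given differentiability at $(x,t)$.
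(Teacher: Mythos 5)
Your proposal is correct and follows essentially the same two-sided argument as the paper: the upper bound comes from testing the dynamic programming identity of Lemma \ref{Lem 1} with admissible directions in $[m_{1},m_{N+1}]$ and then invoking the duality $L^{*}=H$, and the lower bound comes from the interpolation point $y=\bigl(1-\tfrac{h}{t}\bigr)x+\tfrac{h}{t}z$ built from a Hopf-Lax minimizer $z$. The only (immaterial) difference is that the paper runs the first half forward in time, comparing $u(x+hq,t+h)$ with $u(x,t)$, whereas you run it backward from $(x,t)$ to $(x-hv,t-h)$; the finiteness bookkeeping you flag is exactly the point the paper also emphasizes.
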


\begin{proof}
[Proof of Theorem \ref{Thm 5}]A. Fix $q\in\left[  m_{1},m_{N+1}\right]  $ and
$h>0.$ By Lemma \ref{Lem 1}, we have%
\begin{equation}
u\left(  x+hq,t+h\right)  =\min_{y\in\mathbb{R}}\left\{  hL\left(
\frac{x+hq-y}{h}\right)  +u\left(  y,t\right)  \right\}  .
\end{equation}
Once again since there are some finite values over which we are taking the
minimum, the expression is well-defined. Upon setting $y$ as $x,$ we can only
obtain a larger quantity on the RHS, yielding%
\begin{equation}
u\left(  x+hq,t+h\right)  \leq hL\left(  q\right)  +u\left(  x,t\right)  .
\end{equation}
Hence, for $q\in\left[  m_{1},m_{N+1}\right]  $, we have the inequality%
\begin{equation}
\frac{u\left(  x+hq,t+h\right)  -u\left(  x,t\right)  }{h}\leq L\left(
q\right)  . \label{Thm51}%
\end{equation}

Since we are assuming that $u$ is differentiable at $\left(  x,t\right)  $ we
have the existence of the limit of the LHS of (\ref{Thm51}) thereby yielding
\begin{align}
\partial_{t}u\left(  x,t\right)  +qDu\left(  x,t\right)   &  \leq L\left(
q\right)  ,\ i.e.,\nonumber\\
\partial_{t}u\left(  x,t\right)  +qDu\left(  x,t\right)  -L\left(  q\right)
&  \leq0. \label{Thm52}%
\end{align}

We now use the equality, $\sup_{q\in\mathbb{R}}\left\{  qw-L\left(  q\right)
\right\}  =:H\left(  w\right)  $, writing%
\begin{equation}
H\left(  Du\left(  x,t\right)  \right)  =\sup_{q\in\mathbb{R}}\left\{
qDu-L\left(  q\right)  \right\}  .
\end{equation}

Note that the values of $q$ for which $L\left(  q\right)  =\infty$ are clearly
not candidates for the $\sup_{p}$ since $-L\left(  q\right)  =-\infty$. Hence,
we can take the sup over all $q$ that satisfy (\ref{Thm52})$,$ (which is
equivalent to taking the sup over $q\in\left[  m_{1},m_{N+1}\right]  $) to
obtain%
\begin{align}
\partial_{t}u\left(  x,t\right)  +\sup_{q\in\mathbb{R}}\left\{  \partial
_{t}u\left(  x,t\right)  +qDu\left(  x,t\right)  -L\left(  q\right)  \right\}
&  \leq0,\ \ or\nonumber\\
\partial_{t}u\left(  x,t\right)  +H\left(  Du\left(  x,t\right)  \right)   &
\leq0. \label{Thm53}%
\end{align}

B. Now use the definition%
\begin{equation}
u\left(  x,t\right)  =\min_{y\in\mathbb{R}}\left\{  tL\left(  \frac{x-y}%
{t}\right)  +g\left(  y\right)  \right\}  .
\end{equation}
Since $L$ and $g$ are continuous, the minimizer exists and for some
$z\in\mathbb{R}$ depending on $\left(  x,t\right)  $ we have
\begin{equation}
u\left(  x,t\right)  =tL\left(  \frac{x-z}{t}\right)  +g\left(  z\right)  .
\label{Thm54}%
\end{equation}
Define $s:=t-h,$ $y:=\frac{s}{t}x+\left(  1-\frac{s}{t}\right)  z$ so
$\frac{x-z}{t}=\frac{y-z}{s}.$ Then we can write, using the definition of
$u\left(  y,s\right)  $,%
\begin{equation}
u\left(  x,t\right)  -u\left(  y,s\right)  =tL\left(  \frac{x-z}{t}\right)
+g\left(  z\right)  -\min_{\hat{y}}\left\{  sL\left(  \frac{y-\hat{y}}%
{s}\right)  +g\left(  \hat{y}\right)  \right\}  .
\end{equation}
By substituting $z$ (defined by (\ref{Thm53})) in place of $\hat{y}$ in this
expression, we subtract out at least as much and obtain the inequality%
\begin{align}
u\left(  x,t\right)  -u\left(  y,s\right)   &  \geq tL\left(  \frac{x-z}%
{t}\right)  +g\left(  z\right)  -\left\{  sL\left(  \frac{y-z}{s}\right)
+g\left(  z\right)  \right\} \nonumber\\
&  =\left(  t-s\right)  L\left(  \frac{x-z}{t}\right)  \label{Thm55}%
\end{align}
by virtue of the equality $\frac{x-z}{t}=\frac{y-z}{s}.$ Note that by
definition, $L\left(  \frac{x-z}{t}\right)  =L\left(  \frac{y-z}{s}\right)
<\infty,$ so there is no divergence problem there. Now replace $y$ with its
definition above, and use $t-s=h$ to write (\ref{Thm55}) as%
\begin{equation}
\frac{u\left(  x,t\right)  -u\left(  x+\frac{h}{t}\left(  z-x\right)
,t-h\right)  }{h}\geq L\left(  \frac{x-z}{t}\right)  . \label{Thm56}%
\end{equation}
Since we are assuming that the derivative exists at $\left(  x,t\right)  $ we
can take the limit as $h\rightarrow0+$ and obtain%
\begin{equation}
\frac{x-z}{t}Du\left(  x,t\right)  +\partial_{t}u\left(  x,t\right)  \geq
L\left(  \frac{x-z}{t}\right)  \label{Thm57}%
\end{equation}

We use the definition of $H$ again, and write%
\begin{align}
u_{t}\left(  x,t\right)  +H\left(  Du\left(  x,t\right)  \right)   &
=u_{t}\left(  x,t\right)  +\max_{q\in\mathbb{R}}\left\{  qDu-L\left(
q\right)  \right\} \nonumber\\
&  \geq u_{t}\left(  x,t\right)  +\frac{x-z}{t}Du\left(  x,t\right)  -L\left(
\frac{x-z}{t}\right)  \label{Thm58}%
\end{align}
where we have chosen one value of $q,$ namely $\left(  x-z\right)  /t$ to
obtain the inequality. Note, again, that from the original definition in
(\ref{Thm54})$,$ $L\left(  \frac{x-z}{t}\right)  $ must be finite. Hence, the
RHS\ of (\ref{Thm58}) is well-defined. Combining (\ref{Thm57}) and
(\ref{Thm58}) yields the inequality%
\begin{equation}
\partial_{t}u\left(  x,t\right)  +H\left(  Du\left(  x,t\right)  \right)
\geq0. \label{Thm59}%
\end{equation}

Combining (\ref{Thm59}) with (\ref{Thm54})$,$ we obtain the result that $u$
satisfies the Hamilton-Jacobi equation at $\left(  x,t\right)  $.
\end{proof}

\begin{theorem}
\label{Thm 6}The function $u\left(  x,t\right)  $ defined by the Lax-Oleinik
formula is Lipschitz continuous, differentiable a.e. in $\mathbb{R\times
}\left(  0,\infty\right)  $ and solves the initial value problem%
\begin{align*}
u_{t}+H\left(  u_{x}\right)   &  =0\ \ \ a.e.\ in\ \mathbb{R\times}\left(
0,\infty\right) \\
u\left(  x,0\right)   &  =g\left(  x\right)  \ \ for\ all\ x\in\mathbb{R}%
\text{ .}%
\end{align*}

\end{theorem}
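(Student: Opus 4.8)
The plan is to assemble the pieces already proven in Lemmas \ref{Lem A}--\ref{Lem C} and Theorem \ref{Thm 5}, following the template of Evans but keeping track of the finiteness of $L$. First I would observe that Lemma \ref{Lem C}(b) gives that $u$ is Lipschitz continuous on $\mathbb{R}\times(0,\infty)$ (indeed Lipschitz in $x$ uniformly in $t$ by Lemma \ref{Lem A}, and Lipschitz in $t$ by Lemma \ref{Lem C}(a)), so by Rademacher's theorem (Lemma \ref{Lem C}(c)) $u$ is differentiable at almost every point $(x,t)\in\mathbb{R}\times(0,\infty)$. At each such point of differentiability, Theorem \ref{Thm 5} applies verbatim and yields
\[
u_t(x,t)+H\bigl(u_x(x,t)\bigr)=0.
\]
Hence the PDE holds a.e.\ in $\mathbb{R}\times(0,\infty)$.

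Next I would verify the initial condition. Lemma \ref{Lem B} gives the quantitative bound $|u(x,t)-g(x)|\le Ct$ for all $x\in\mathbb{R}$ and all $t>0$, where $C$ is the explicit constant appearing there. Letting $t\downarrow 0$ shows $u(x,t)\to g(x)$ for every $x\in\mathbb{R}$; combined with the joint Lipschitz continuity of $u$ on $\mathbb{R}\times[0,\infty)$ (extending Lemma \ref{Lem C}(b) down to $t=0$ using Lemma \ref{Lem B}), this establishes $u(x,0)=g(x)$ for all $x\in\mathbb{R}$, with $u$ continuous up to the boundary. So both lines of the initial value problem are satisfied in the stated senses.

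I do not expect a serious obstacle here, since the theorem is essentially a bookkeeping assembly of the preceding lemmas; the only point requiring a word of care is the one that the section has been emphasizing throughout, namely that $L$ takes the value $+\infty$ outside $[m_1,m_{N+1}]$. But every place where $L$ is evaluated in Theorem \ref{Thm 5} and Lemmas \ref{Lem A}--\ref{Lem C} the argument lands in the finite range (because the relevant point is a minimizer of a Hopf--Lax functional, or is $(x-z)/t$ with $z$ a minimizer, or is $0\in[m_1,m_{N+1}]$), so no divergence arises. If anything is mildly delicate it is confirming that the exceptional null set from Rademacher's theorem is the only obstruction to the PDE holding pointwise — but that is exactly what Theorem \ref{Thm 5} handles, since its hypothesis is precisely differentiability of $u$ at the point in question. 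Thus the proof reduces to: (i) cite Lemma \ref{Lem C} for Lipschitz continuity and a.e.\ differentiability; (ii) cite Theorem \ref{Thm 5} to get the equation at each point of differentiability, hence a.e.; (iii) cite Lemma \ref{Lem B} and continuity to get $u(x,0)=g(x)$.
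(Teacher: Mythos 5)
Your proposal is correct and is exactly the argument the paper intends: the paper gives no separate proof of Theorem \ref{Thm 6}, treating it as the direct assembly of Lemmas \ref{Lem A}--\ref{Lem C} (Lipschitz continuity, a.e.\ differentiability via Rademacher, and the bound $|u(x,t)-g(x)|\le Ct$ for the initial condition) together with Theorem \ref{Thm 5} at each point of differentiability, just as you describe.
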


\bigskip

\begin{definition}
We say that $w\in L^{\infty}\left(  \mathbb{R\times}\left(  0,\infty\right)
\right)  $ is an\textbf{\ }\textit{integral solution}\textbf{ }of
\begin{align*}
w_{t}+H\left(  w\right)  _{x}  &  =0\ in\ \mathbb{R\times}\left(
0,\infty\right) \\
w\left(  x,0\right)   &  =h\left(  x\right)  \ for\ all\ x\in\mathbb{R}\text{
}%
\end{align*}
if for all test functions $\phi:\mathbb{R\times}[0,\infty)\rightarrow
\mathbb{R}$ (i.e., $\phi$ that are smooth and have compact support) one has
the identity%
\[
\int_{0}^{\infty}\int_{-\infty}^{\infty}w\phi_{t}dxdt+\int_{-\infty}^{\infty
}hx\phi dx|_{t=0}+\int_{0}^{\infty}\int_{-\infty}^{\infty}H\left(  w\right)
\phi_{x}dxdt=0.
\]

\end{definition}

\begin{theorem}
\label{Thm 2}Under the assumptions that $g$ is Lipschitz and that $H$ is
polygonal and convex (as described above), the function $w\left(  x,t\right)
:=\partial_{x}u\left(  x,t\right)  $ where $u$ is the Hopf-Lax function is an
integral solution of the initial value problem for the conservation law above.
\end{theorem}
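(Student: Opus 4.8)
The plan is to follow the classical argument (Evans, \S3.4) for showing that the spatial derivative of the Hopf-Lax function is an integral solution of the conservation law, adapting it to accommodate the possibly infinite values of the polygonal Legendre transform $L$. First I would record what we already have: by Theorem \ref{Thm 6}, $u$ is Lipschitz on $\mathbb{R}\times(0,\infty)$ and satisfies $u_t + H(u_x) = 0$ a.e.\ and $u(\cdot,0)=g$; by Lemma \ref{Lem C}(c) (Rademacher), $u$ is differentiable a.e., so $u_x$ exists a.e.\ and is bounded (in fact $|u_x|\le \mathrm{Lip}(g)$, and moreover $u_x$ takes values in $[m_1,m_{N+1}]$ since only those arguments of $L$ are relevant in the minimization). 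Thus $w:=u_x \in L^\infty(\mathbb{R}\times(0,\infty))$, which is the first requirement of an integral solution. The key point that $w$ is well-defined as a distributional derivative of the Lipschitz function $u$ means $u_x$ and $u_t$ can be integrated by parts against test functions without boundary terms in the interior.

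Next I would carry out the integration-by-parts argument. Fix a test function $\phi \in C_c^\infty(\mathbb{R}\times[0,\infty))$. Multiply the pointwise identity $u_t + H(u_x)=0$ (valid a.e.) by $\phi_x$ and integrate over $\mathbb{R}\times(0,\infty)$:
\begin{equation}
\int_0^\infty\!\!\int_{-\infty}^\infty \bigl(u_t + H(u_x)\bigr)\phi_x\, dx\, dt = 0.
\end{equation}
In the first term, integrate by parts in $t$: since $\phi$ has compact support in $t\in[0,\infty)$, $\int_0^\infty u_t \phi_x\, dt = -\int_0^\infty u \phi_{xt}\, dt - u(x,0)\phi_x(x,0)$, and then integrate by parts in $x$ on the $\int u\phi_{xt}$ piece to move the $x$-derivative off $u$; because $u$ is only Lipschitz this step uses that $u_x$ exists a.e.\ and the product rule for Sobolev/$W^{1,\infty}_{\mathrm{loc}}$ functions against smooth functions, giving $\int u\phi_{xt}\, dx = -\int u_x \phi_t\, dx$. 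Similarly $\int u(x,0)\phi_x(x,0)\, dx = -\int g'(x)\phi(x,0)\, dx = -\int h(x)\phi(x,0)\, dx$ using $u(\cdot,0)=g$ and $h=g'$. For the second term $\int H(u_x)\phi_x$, note $H(u_x)$ is bounded and measurable (since $u_x$ is bounded and $H$ is continuous), so this integral makes sense as written and stays as the $H(w)\phi_x$ term. Collecting everything yields exactly
\begin{equation}
\int_0^\infty\!\!\int_{-\infty}^\infty w\phi_t\, dx\, dt + \int_{-\infty}^\infty h(x)\phi(x,0)\, dx + \int_0^\infty\!\!\int_{-\infty}^\infty H(w)\phi_x\, dx\, dt = 0,
\end{equation}
which is the definition of an integral solution.

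The main obstacle, and the place where care is needed beyond a verbatim copy of Evans, is justifying the integration by parts in $x$ when $u$ is merely Lipschitz rather than $C^1$: one must invoke that a Lipschitz function is in $W^{1,\infty}_{\mathrm{loc}}$, that its weak derivative coincides a.e.\ with its classical (a.e.-defined) derivative $u_x$, and that the product rule $\partial_x(u\phi_t) = u_x\phi_t + u\phi_{xt}$ holds in the sense of distributions against compactly supported smooth test functions — this is exactly the passage the paper flagged in the discussion around equation \eqref{testfn} ("the product rule applies outside of a set of measure zero"). A secondary subtlety is making sure the possibly-infinite values of $L$ never actually enter: this is handled because $u_x \in [m_1,m_{N+1}]$ a.e.\ (the Hopf-Lax minimizer always selects a finite-$L$ direction, as used repeatedly in Lemmas \ref{Lem A}--\ref{Lem C}), so $H(u_x)$ and all integrands are genuinely finite and bounded, and no compactness-of-$L$'s-effective-domain issue obstructs the estimates. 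Everything else — the vanishing of the boundary term at $t=\infty$, the continuity of $H$, the dominated convergence needed to pass derivatives inside integrals — is routine given the uniform Lipschitz bounds from Lemma \ref{Lem C}.
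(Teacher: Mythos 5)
Your proposal follows essentially the same route as the paper's proof: invoke Theorem \ref{Thm 6} for Lipschitz continuity, a.e.\ differentiability, and the a.e.\ Hamilton--Jacobi identity, then multiply by $\phi_x$ and integrate by parts, with the integration by parts justified by the Lipschitz regularity of $u$. Your write-up simply spells out in more detail the $W^{1,\infty}_{\mathrm{loc}}$ justification and the finiteness of $H(u_x)$ that the paper's proof treats as already established.
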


This is the analog of Theorem 2, Section 3.4 of \cite{EV}, but the statement
of the theorem there is somewhat different.

\begin{proof}
[Proof of Theorem \ref{Thm 2}]From Theorem \ref{Thm 6} above we know that $u$
is Lipschitz continuous, differentiable a.e. in $\mathbb{R\times}\left(
0,\infty\right)  $ and solves the Hamilton-Jacobi initial value problem
subject to initial condition $h\left(  x\right)  $ where $g\left(  x\right)
=\int_{0}^{x}h\left(  z\right)  dz$. We multiply $u_{t}+H\left(  u_{x}\right)
=0$ with a test function $\phi_{x}$ and integrate over $\int_{0}^{\infty}%
\int_{-\infty}^{\infty}...dxdt$. Upon integrating by parts one obtains the
relation above in the definition of integral solution. The integration by
parts operations are justified by the fact that $u\left(  x,t\right)  $ is
Lipschitz in both $x$ and $t$. Also,%
\[
w\left(  x,0\right)  =\partial_{x}u\left(  x,0\right)  =g^{\prime}\left(
x\right)  .
\]

\end{proof}

Notably, we have used the largest of the minimizers rather than the least to
improve on the result of \cite{EV} by requiring only convexity instead of
uniform convexity of the flux function.

\section{Proof That Solution is BV and Greatest Minimizer is Non-decreasing in
$x$}

\begin{theorem}
\label{Thm 4.1}Suppose $H$ is polygonal (with finitely many break points),
convex, $H\left(  0\right)  =0$, and $g$ is differentiable. Then for any
$\left(  x,t\right)  $ there exists $y^{\ast}\left(  x,t\right)  $ that is
defined as the greatest minimizer, i.e.,%
\begin{equation}
\min_{y\in\mathbb{R}}\left\{  tL\left(  \frac{x-y}{t}\right)  +g\left(
y\right)  \right\}  =\left\{  tL\left(  \frac{x-y^{\ast}\left(  x,t\right)
}{t}\right)  +g\left(  y^{\ast}\left(  x,t\right)  \right)  \right\}
\end{equation}
and any other number $\hat{y}$ that minimizes the left hand side satisfies
$\hat{y}\leq y^{\ast}.$
\end{theorem}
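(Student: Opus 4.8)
The plan is to show that the set of minimizers is nonempty and bounded above, and that its supremum is itself a minimizer. First I would fix $(x,t)$ and consider the function $\Phi(y) := tL\!\left(\frac{x-y}{t}\right) + g(y)$. Since $L$ is finite precisely on $[m_1, m_{N+1}]$, the constraint $\frac{x-y}{t} \in [m_1, m_{N+1}]$ confines any candidate minimizer to the compact interval $[x - t\,m_{N+1},\, x - t\,m_1]$; outside this interval $\Phi \equiv +\infty$. On this compact interval $\Phi$ is continuous (as $L$ is Lipschitz there and $g$ is differentiable, hence continuous), so by the extreme value theorem the minimum is attained and the minimizing set $M := \{y : \Phi(y) = \min \Phi\}$ is nonempty.

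Next I would show $M$ is closed and bounded, hence $y^* := \sup M$ is finite. Boundedness is immediate since $M \subseteq [x - t\,m_{N+1},\, x - t\,m_1]$. Closedness follows from continuity of $\Phi$ on this interval: if $y_j \in M$ and $y_j \to \bar y$, then $\Phi(\bar y) = \lim \Phi(y_j) = \min\Phi$, so $\bar y \in M$. Therefore $y^* \in M$, i.e. $y^*$ is genuinely a minimizer and not merely a supremum of minimizers. The displayed identity in the statement then holds by definition of $y^*$, and the final clause — that every other minimizer $\hat y$ satisfies $\hat y \le y^*$ — is immediate since $y^*$ is the supremum of $M$.

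I do not expect a serious obstacle here; the statement is essentially a compactness argument. The one point requiring a little care is the reduction to a compact domain: one must observe that the infimum over $\mathbb{R}$ is the same as the infimum over $[x - t\,m_{N+1}, x - t\,m_1]$ because the complement contributes only the value $+\infty$, and that this interval is nonempty and nondegenerate precisely because $m_1 < m_{N+1}$ (indeed $m_1 < 0 < m_{N+1}$ by the standing assumptions of the section). I would also note explicitly that $H(0)=0$ guarantees $L(p) = \sup_q\{pq - H(q)\} \ge -H(0) = 0$ is bounded below, so $\Phi$ is bounded below on its effective domain and the minimum is a finite real number, making $y^*$ well-defined as claimed.
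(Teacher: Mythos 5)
Your proposal is correct and follows essentially the same route as the paper: both arguments restrict attention to the compact interval $[x - t\,m_{N+1},\, x - t\,m_1]$ where $L$ is finite, invoke continuity of $tL\!\left(\frac{x-y}{t}\right)+g(y)$ there to obtain a minimizer, and then show the supremum of the minimizer set is itself a minimizer by taking a sequence converging to it and passing to the limit. Your extra remarks (closedness of the minimizer set, the role of $H(0)=0$ in bounding $L$ below) are harmless refinements of the same compactness argument.
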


\begin{remark}
By using the largest minimizer instead of the least as in the classical
theorems, we obtain a particular inequality below that is a consequence of
convexity rather than from the stricter assumption of uniform convexity.
\end{remark}

\begin{remark}
(\textbf{Minimizers)} We first illustrate the key idea. The minimizers must
either be on the vertices of $L$ or on the locally differentiable part of $L$.
We suppress $t$ and suppose $x=0$. For fixed $x,t$ in order to have a
non-isolated set of minimizers of $f\left(  y;x,t\right)  +g\left(  y\right)
$, they need to be on the differentiable part of $f$ (i.e. non-vertex). This
latter case means that on some interval, e.g., $\left[  a,b\right]  $ one has
$f\left(  y;0,t\right)  +g\left(  y\right)  =0$ (note that we can always shift
up or down, so we can adjust the constant to $0$). On this stretch of $y$ we
can write%
\[
f\left(  y;0\right)  =my\ \ and\ -g\left(  y\right)  =my
\]
Thus all $y\in\left[  a,b\right]  $ are minimizers. If we increase $x$
slightly we obtain
\[
f\left(  y;x\right)  =m\left(  y-x\right)  \ \ and\ -g\left(  y\right)  =my
\]
so that
\[
f\left(  y;x\right)  +g\left(  y\right)  =m\left(  y-x\right)  -my=-mx.
\]
This means that the minimum is less (if $x>0$) but again, all $y$ are
minimizers. In computing the derivative%
\[
\partial_{x}\min_{y}\left\{  f\left(  y;x\right)  +g\left(  y\right)
\right\}
\]
we see that we can use any $y$ and we will obtain the same result. I.e., if
$\hat{y}$ is any minimizer, then we have (as one can see graphically, or from
computation), where $g$ is differentiable,
\[
\partial_{x}\min_{y}\left\{  f\left(  y;x\right)  +g\left(  y\right)
\right\}  =-\partial_{y}f\left(  \hat{y};x\right)  =g^{\prime}\left(  \hat
{y}\right)  .
\]
Hence, we can take for example the largest of these $\hat{y},$ or $\sup\hat
{y}$ since the derivatives are constant (and, $f$ and $g$ are $C^{1}$ so we
can use continuity).
\end{remark}

\begin{center}

\end{center}

\begin{proof}
[Proof of Theorem \ref{Thm 4.1}]Note first that by definition of the Legendre
transform, $L\left(  q\right)  $, one has that $L\left(  q\right)  <\infty$ if
and only if $q\in\left[  m_{1},m_{N+1}\right]  .$ Also, the interval is
closed, since one has for some $c\in\mathbb{R}$,
\begin{equation}
L\left(  m_{N+1}\right)  =\sup_{q}\left\{  m_{N+1}q-H\left(  q\right)
\right\}  =c
\end{equation}
for some constant $c$, and similarly at the $m_{1}$ endpoint. Hence, if we
define, for fixed $x$ and $t,$%
\begin{equation}
v\left(  y\right)  :=tL\left(  \frac{x-y}{t}\right)  +g\left(  y\right)
\end{equation}
and note that $v$ is continuous, since, by assumption $g$ and $L$ are also continuous.

A continuous function on a closed, bounded interval attains at least one
minimizer, which we will call $y_{1}\in\left[  x-m_{N+1}t,x-m_{1}t\right]  $
and denote $v\left(  y_{1}\right)  =:b.$ Note that $v$ is infinite outside of
this interval. Now for any index set $A$, let $\left\{  y_{\alpha}\right\}
_{\alpha\in A}$ be the set of points such that $y_{\alpha}\in\left[
x-m_{N+1}t,x-m_{1}t\right]  $ and $v\left(  y_{\alpha}\right)  =v\left(
y_{1}\right)  .$ Let $y^{\ast}:=\sup\left\{  y_{\alpha}:\alpha\in A\right\}  $
which exists since it is a bounded set of reals. Thus there is a sequence
$\left\{  y_{j}\right\}  _{j=1}^{\infty}$ such that $\lim_{j\rightarrow\infty
}y_{j}=y^{\ast}$. By continuity of $v$ one has then that
\begin{equation}
v\left(  y^{\ast}\right)  =\lim_{j\rightarrow\infty}v\left(  y_{j}\right)
=\lim_{j\rightarrow\infty}b=b.
\end{equation}
Thus, $y^{\ast}$ is a minimizer, there is no minimizer that is greater than
$y^{\ast},$ and it is unique by definition of supremum.
\end{proof}

We write $y^{\ast}\left(  x,t\right)  $ as the largest minimizer for a given
$x$ and $t$. We define $y_{1}^{\ast}:=y^{\ast}\left(  x_{1},t\right)  $ and
$y_{2}^{\ast}=y^{\ast}\left(  x_{2},t\right)  .$\textbf{\ }We have the
following, analogous to \cite{EV} but using convexity that may not be strict.

\begin{lemma}
\label{Lem 4.1}If $x_{1}<x_{2}$, then%
\[
tL\left(  \frac{x_{2}-y_{1}^{\ast}}{t}\right)  +g\left(  y_{1}^{\ast}\right)
\leq tL\left(  \frac{x_{2}-y}{t}\right)  +g\left(  y\right)
\ \ \ \ if\ \ \ y<y_{1}^{\ast}.
\]

\end{lemma}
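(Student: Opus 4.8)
The plan is to run the classical Evans-style comparison argument (the one that underlies monotonicity of the minimizer), but with bookkeeping for the points at which $L=+\infty$. First I would record the defining property of the greatest minimizer at $x_{1}$: by Theorem \ref{Thm 4.1}, $y_{1}^{\ast}=y^{\ast}(x_{1},t)$ satisfies
\[
tL\!\left(\frac{x_{1}-y_{1}^{\ast}}{t}\right)+g(y_{1}^{\ast})\leq tL\!\left(\frac{x_{1}-y}{t}\right)+g(y)\qquad\text{for every }y\in\mathbb{R},
\]
and, since $y_{1}^{\ast}$ is an actual minimizer, the left-hand side is finite, so $\tfrac{x_{1}-y_{1}^{\ast}}{t}\in[m_{1},m_{N+1}]$.

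Next, fix $y<y_{1}^{\ast}$ and abbreviate $a:=\tfrac{x_{1}-y_{1}^{\ast}}{t}$, $b:=\tfrac{x_{1}-y}{t}$, $c:=\tfrac{x_{2}-y_{1}^{\ast}}{t}$, $d:=\tfrac{x_{2}-y}{t}$. The crucial algebraic observation is that these four numbers form a ``parallelogram'': $b-a=d-c=\tfrac{y_{1}^{\ast}-y}{t}>0$ and $c-a=d-b=\tfrac{x_{2}-x_{1}}{t}>0$, so $a$ is the smallest, $d$ the largest, and $a\leq b\leq d$, $a\leq c\leq d$. I would then dispatch the only bad case separately: since $d\geq a\geq m_{1}$, the point $d$ lies outside $[m_{1},m_{N+1}]$ only if $d>m_{N+1}$, in which case $L(d)=+\infty$ and the claimed inequality is trivially true; otherwise $d\leq m_{N+1}$ forces $a,b,c,d\in[m_{1},m_{N+1}]$, so all four values of $L$ are finite and the remaining manipulations are legitimate.

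With finiteness secured, the argument is two lines. From the displayed inequality at $x_{1}$ applied to this particular $y$, $g(y_{1}^{\ast})-g(y)\leq t\bigl(L(b)-L(a)\bigr)$. From convexity of $L$ together with the monotonicity of difference quotients (the three-slopes lemma), the configuration $a\leq c$, $b\leq d$, $b-a=d-c>0$ gives $\tfrac{L(b)-L(a)}{b-a}\leq\tfrac{L(d)-L(c)}{d-c}$, hence $L(b)-L(a)\leq L(d)-L(c)$. Chaining the two estimates yields $g(y_{1}^{\ast})-g(y)\leq t\bigl(L(d)-L(c)\bigr)$, which rearranges to exactly $tL(c)+g(y_{1}^{\ast})\leq tL(d)+g(y)$, the assertion of the lemma.

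The only real subtlety, and the step I would be most careful about, is the bookkeeping around $L=+\infty$: one must check that the argument $c=\tfrac{x_{2}-y_{1}^{\ast}}{t}$ appearing on the left is in the finite range whenever the right-hand side is finite, which is where the ordering $a\leq c\leq d$ and the a priori bound $a\in[m_{1},m_{N+1}]$ are used. The convexity step itself is routine for a convex (here piecewise-linear) $L$, and notably requires neither strict nor uniform convexity — this is precisely the place where working with the greatest minimizer lets one get by with plain convexity, as the remark preceding the lemma anticipates.
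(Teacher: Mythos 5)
Your argument is correct and follows essentially the same route as the paper: both proofs reduce the claim to the quadrangle inequality $L\bigl(\tfrac{x_{2}-y_{1}^{\ast}}{t}\bigr)+L\bigl(\tfrac{x_{1}-y}{t}\bigr)\leq L\bigl(\tfrac{x_{1}-y_{1}^{\ast}}{t}\bigr)+L\bigl(\tfrac{x_{2}-y}{t}\bigr)$ and then add the minimality inequality at $x_{1}$; you obtain that inequality from the monotonicity of difference quotients (equal-length translated intervals), while the paper gets the identical estimate from two explicit convex combinations with a chosen $\lambda$. Your explicit bookkeeping of the $L=+\infty$ cases via $a\leq b,c\leq d$ is a welcome addition that the paper's proof of this lemma leaves implicit.
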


\begin{proof}
[Proof of Lemma \ref{Lem 4.1}]For any $\lambda\in\left[  0,1\right]  $ and
$r,s\in\mathbb{R}$ we have from convexity
\begin{equation}
L\left(  \lambda r+\left(  1-\lambda\right)  s\right)  \leq\lambda L\left(
r\right)  +\left(  1-\lambda\right)  L\left(  s\right)  .\
\end{equation}
Now let $x_{1}<x_{2}$ and for $y\in y_{1}^{*}$ define $\lambda$ by%
\begin{equation}
\lambda:=\frac{y_{1}^{\ast}-y}{x_{2}-x_{1}+y_{1}^{\ast}-y}\ .
\end{equation}
By assumption, $x_{1}<x_{2}$ and $y<y_{1}^{\ast}$ so that $\lambda\in\left(
0,1\right)  $. Let $r:=\left(  x_{1}-y_{1}^{\ast}\right)  /t$ and $s:=\left(
x_{2}-y\right)  /t.$ A computation shows
\begin{equation}
\lambda r+\left(  1-\lambda\right)  s=\left(  x_{2}-y\right)  /t\ .
\end{equation}
This yields%
\begin{equation}
L\left(  \frac{x_{2}-y_{1}^{\ast}}{t}\right)  \leq\lambda L\left(  \frac
{x_{1}-y_{1}^{\ast}}{t}\right)  +\left(  1-\lambda\right)  L\left(
\frac{x_{2}-y}{t}\right)  . \label{Lem411}%
\end{equation}
Next, we interchange the roles of $r$ and $s$, i.e, $\hat{r}:=\left(
x_{2}-y\right)  /t$ and $\hat{s}:=\left(  x_{1}-y_{1}^{\ast}\right)  /t$ and
note that
\begin{equation}
\lambda\hat{r}+\left(  1-\lambda\right)  \hat{s}=\left(  x_{1}-y\right)  /t.
\end{equation}
Thus, convexity implies%
\begin{equation}
L\left(  \frac{x_{1}-y}{t}\right)  \leq\left(  1-\lambda\right)  L\left(
\frac{x_{1}-y_{1}^{\ast}}{t}\right)  +\lambda L\left(  \frac{x_{2}-y}%
{t}\right)  . \label{Lem412}%
\end{equation}
Adding (\ref{Lem411}) and (\ref{Lem412}) yields,%
\begin{equation}
L\left(  \frac{x_{2}-y_{1}^{\ast}}{t}\right)  +L\left(  \frac{x_{1}-y}%
{t}\right)  \leq L\left(  \frac{x_{1}-y_{1}^{\ast}}{t}\right)  +L\left(
\frac{x_{2}-y}{t}\right)  . \label{Lem413}%
\end{equation}
By definition of $y_{1}^{\ast}:=y^{\ast}\left(  x_{1},t\right)  $ as a
minimizer for $x_{1}$ (with $t$ still fixed) we have%
\begin{equation}
tL\left(  \frac{x_{1}-y_{1}^{\ast}}{t}\right)  +g\left(  y_{1}^{\ast}\right)
\leq tL\left(  \frac{x_{1}-y}{t}\right)  +g\left(  y\right)  \ .
\label{Lem414}%
\end{equation}
Upon multiplying (\ref{Lem413}) by $t$ and adding to (\ref{Lem414}) we obtain,
as two of the $L$ terms cancel,%
\begin{equation}
tL\left(  \frac{x_{2}-y_{1}^{\ast}}{t}\right)  +g\left(  y_{1}^{\ast}\right)
\leq tL\left(  \frac{x_{2}-y}{t}\right)  +g\left(  y\right)  , \label{Lem415}%
\end{equation}
provided, still, that $y<y_{1}^{\ast}$ . Hence, if $y_{2}^{\ast}$ is the
largest minimizer for $x_{2},$ it must be greater than or equal to
$y_{1}^{\ast}.$ I.e., any value $y<y_{1}^{\ast}$ satisfies (\ref{Lem415}) so
it could not be a larger minimizer than $y_{1}^{\ast}.$
\end{proof}

An immediate consequence of this result is the following:

\begin{theorem}
\label{Thm 4.2}For each fixed, $t,$ as a function of $x,$ $y^{\ast}\left(
x,t\right)  $ is non-decreasing and is equal a.e. in $x$ to a differentiable function.
\end{theorem}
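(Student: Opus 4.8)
The plan is to derive Theorem \ref{Thm 4.2} as a short consequence of Lemma \ref{Lem 4.1} together with a classical fact about monotone functions. Lemma \ref{Lem 4.1} tells us that for $x_1 < x_2$, no $y < y_1^\ast$ can be a minimizer of $y \mapsto tL\left(\frac{x_2-y}{t}\right) + g(y)$ that exceeds $y_1^\ast$; in particular the \emph{greatest} minimizer $y_2^\ast$ for $x_2$ must satisfy $y_2^\ast \geq y_1^\ast$. This is exactly the statement that $x \mapsto y^\ast(x,t)$ is non-decreasing, which I would state first and attribute directly to the lemma.

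\textbf{Monotonicity.} First I would fix $t > 0$ and take $x_1 < x_2$. By Theorem \ref{Thm 4.1} the greatest minimizers $y_1^\ast = y^\ast(x_1,t)$ and $y_2^\ast = y^\ast(x_2,t)$ are well-defined. By Lemma \ref{Lem 4.1}, inequality (\ref{Lem415}) shows that $y_1^\ast$ itself satisfies the minimizing inequality at $x_2$ relative to every $y < y_1^\ast$; hence any minimizer at $x_2$ is either $\geq y_1^\ast$ or else equal in value to $y_1^\ast$ while being $\leq y_1^\ast$ — in all cases the \emph{largest} minimizer $y_2^\ast$ is $\geq y_1^\ast$. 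Therefore $x \mapsto y^\ast(x,t)$ is non-decreasing on $\mathbb{R}$.

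\textbf{Almost-everywhere differentiability.} A non-decreasing real-valued function of a real variable is differentiable almost everywhere (Lebesgue's differentiation theorem for monotone functions) and has at most countably many discontinuities, all of jump type. So $y^\ast(\cdot,t)$ is differentiable at all but countably many $x$, and at its (at most countably many) jump points one may redefine it by left- or right-continuity without changing it a.e.; the resulting modification agrees with $y^\ast(\cdot,t)$ off a countable (hence null) set and is differentiable a.e. This gives the statement ``equal a.e.\ in $x$ to a differentiable function.'' (If one wants a genuinely everywhere-differentiable representative rather than just a.e.-differentiable, one can invoke that a monotone function equals a.e.\ a function of the same monotonicity type that is continuous, and then smooth on the complement of a null set; the precise phrasing in the theorem only demands agreement a.e.\ with a differentiable function, so the monotone-function facts suffice.)

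\textbf{Main obstacle.} The only genuinely delicate point is the logical step in the monotonicity argument: Lemma \ref{Lem 4.1} as stated gives (\ref{Lem415}) for $y < y_1^\ast$ with a \emph{non-strict} inequality (a consequence of mere convexity of $L$, not strict convexity), so a priori one only learns that values below $y_1^\ast$ do not do \emph{strictly} better at $x_2$ — they could tie. This is precisely why we must work with the \emph{greatest} minimizer $y^\ast$ rather than an arbitrary or least one: among all tied minimizers at $x_2$, the supremum is $\geq y_1^\ast$, and by Theorem \ref{Thm 4.1} this supremum is itself attained, so $y_2^\ast \geq y_1^\ast$. I would make this ``tie-breaking'' reasoning explicit, since it is exactly the place where the paper's use of $\arg^+$ / greatest minimizer does real work that the classical strictly-convex argument handles for free.
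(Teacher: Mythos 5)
Your proposal is correct and follows essentially the same route as the paper: monotonicity of $y^{\ast}\left(  \cdot,t\right)  $ deduced from Lemma \ref{Lem 4.1} (with the tie-breaking role of the \emph{greatest} minimizer made explicit, which the paper's own proof glosses over), followed by the Lebesgue differentiation theorem for monotone functions, exactly as the paper does via its citation of Rudin. One small correction: a non-decreasing function is differentiable off a \emph{null} set that need not be countable (only its discontinuity set is countable), and a monotone function with a jump is not equal a.e.\ to any continuous function; since you ultimately invoke only a.e.\ differentiability, neither point affects your conclusion.
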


\begin{proof}
[Proof of Theorem \ref{Thm 4.2}]1. From previous calculations we know that $L$
is polygonal convex and is finite only in the closed interval $\left[
m_{1},m_{N+1}\right]  .$ Also, we have $L\geq0$. Now we apply Lemma
\ref{Lem 4.1} as follows. By definition of $y_{2}^{\ast}$ we have
\begin{equation}
\min_{y\in\mathbb{R}}\left\{  tL\left(  \frac{x_{2}-y}{t}\right)  +g\left(
y\right)  \right\}  =\left\{  tL\left(  \frac{x_{2}-y_{2}^{\ast}\left(
x,t\right)  }{t}\right)  +g\left(  y_{2}^{\ast}\left(  x,t\right)  \right)
\right\}  .
\end{equation}
By Proposition \ref{Prop1}, we have that for any $y<y_{1}^{\ast}$, the
expression$\left\{  tL\left(  \frac{x_{2}-y}{t}\right)  +g\left(  y\right)
\right\}  $ is already equal to or greater than $tL\left(  \frac{x_{2}%
-y_{1}^{\ast}}{t}\right)  +g\left(  y_{1}^{\ast}\right)  ,$ so there cannot be
a minimizer that is less than $y_{1}^{\ast}\ .$ Hence, we conclude
$y_{2}^{\ast}\geq y_{1}^{\ast}$ .

This means that with $y^{\ast}\left(  x,t\right)  $ defined as the largest
value that minimizes $tL\left(  \frac{x_{2}-y}{t}\right)  +g\left(  y\right)
$, i.e.%
\[
y^{\ast}\left(  x,t\right)  =\arg^{+}\min\left\{  tL\left(  \frac{x_{2}-y}%
{t}\right)  +g\left(  y\right)  \right\}
\]
we have that the function $y^{\ast}$ is a non-decreasing function of $x$ for
any fixed $t>0$. This implies that it is continuous except for countably many
values of $x.$ Also, for each $t>0,$ one has that $y^{\ast}\left(  x,t\right)
$ is equal a.e. to a function $\hat{y}\left(  x\right)  $ such that $\hat{y}$
is differentiable in $x$ and one has $\hat{y}\left(  x\right)  =\int_{0}%
^{x}\hat{y}^{\prime}\left(  s\right)  ds+z\left(  x\right)  $ where $z$ is
non-decreasing and $z'=0$ except on a set of measure zero (\cite{RU}, p. 157).
\end{proof}

\begin{lemma}
\label{Lem4.2}Let $g$ be differentiable,
\begin{align*}
v\left(  x,t\right)   &  :=tL\left(  \frac{x-y}{t}\right)  +g\left(  y\right)
,\\
w\left(  x,t\right)   &  :=\partial_{x}v\left(  x,t\right)  =\partial_{x}%
\min_{y\in\mathbb{R}}\left\{  tL\left(  \frac{x-y}{t}\right)  +g\left(
y\right)  \right\}  .
\end{align*}
Suppose that $\hat{y}\left(  x,t\right)  $ is the\textbf{\ unique} minimizer
of $v\left(  x,t\right)  $ and that $L\left(  \frac{x-y}{t}\right)  $ is
\textbf{differentiable }at $\hat{y}.$ Then%
\[
w\left(  x,t\right)  =g^{\prime}\left(  \hat{y}\left(  x,t\right)  \right)
=L\left(  \frac{x-\hat{y}}{t}\right)  .
\]

\end{lemma}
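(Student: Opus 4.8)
The plan is to adapt the proof of Theorem \ref{Thm 2.4}, but because the lemma asserts the identity at the specific point $(x,t)$ rather than merely a.e., I would avoid the chain-rule step (which would tacitly require $\hat y$ differentiable in $x$) and instead argue by a direct envelope estimate; the only delicate ingredient is continuity of the minimizer at a point of uniqueness.

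\emph{Step 1 (first-order condition).} Since $\hat y(x,t)$ minimizes $y \mapsto tL(\frac{x-y}{t}) + g(y)$ over all of $\mathbb R$ and $L$ is differentiable at $\frac{x-\hat y}{t}$, that point lies in the open interval $(m_1,m_{N+1})$ and $L$ is affine in a neighbourhood of it; together with differentiability of $g$ this makes $y \mapsto tL(\frac{x-y}{t}) + g(y)$ differentiable at $\hat y$, so its derivative there vanishes, giving $-L'(\frac{x-\hat y}{t}) + g'(\hat y(x,t)) = 0$, i.e. $L'(\frac{x-\hat y(x,t)}{t}) = g'(\hat y(x,t))$. Uniqueness of the minimizer is not needed for this. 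It then remains to identify $w(x,t) = \partial_x v(x,t)$ with this common value, where $v(x,t) := \min_{y\in\mathbb R}\{tL(\frac{x-y}{t}) + g(y)\}$; recall $v = u$ is Lipschitz in $x$ by Lemma \ref{Lem A}, and every minimizer lies in the bounded interval $[x - m_{N+1}t,\, x - m_1 t]$ since $L = \infty$ off $[m_1,m_{N+1}]$.

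\emph{Step 2 (two-sided difference quotients).} For the upper estimate, plug the fixed test point $\hat y(x)$ into $v(x+h)$: $v(x+h) - v(x) \le tL(\frac{x+h-\hat y(x)}{t}) - tL(\frac{x-\hat y(x)}{t})$, and for $|h|$ small this equals $h\,L'(\frac{x-\hat y}{t})$ exactly, because $L$ is affine near $\frac{x-\hat y}{t}$. For the lower estimate, let $y_h$ be a minimizer for $x+h$; comparing $v(x) \le tL(\frac{x-y_h}{t}) + g(y_h)$ with $v(x+h) = tL(\frac{x+h-y_h}{t}) + g(y_h)$ gives $v(x+h) - v(x) \ge tL(\frac{x+h-y_h}{t}) - tL(\frac{x-y_h}{t})$. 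Once $y_h \to \hat y(x)$, for $|h|$ small both arguments fall in the affine patch of $L$ around $\frac{x-\hat y}{t}$, so the right side is likewise $h\,L'(\frac{x-\hat y}{t})$. Dividing by $h$ and handling the signs $h>0$ and $h<0$ separately, the two estimates pinch $\frac{v(x+h)-v(x)}{h}$ to $L'(\frac{x-\hat y}{t})$; hence $v$ is differentiable at $x$ and $w(x,t) = L'(\frac{x-\hat y(x,t)}{t}) = g'(\hat y(x,t))$ by Step 1.

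\emph{Step 3, and the main obstacle: showing $y_h \to \hat y(x)$.} The $y_h$ all lie in a bounded interval uniformly for $|h| \le 1$, so every subsequence has a sub-subsequence $y_{h_k} \to \bar y$. Passing to the limit in $v(x+h_k) = tL(\frac{x+h_k-y_{h_k}}{t}) + g(y_{h_k})$, using continuity of $L$, $g$ and $v$, yields $v(x) = tL(\frac{x-\bar y}{t}) + g(\bar y)$, so $\bar y$ is a minimizer for $x$; by the uniqueness hypothesis $\bar y = \hat y(x)$, and since every subsequential limit equals $\hat y(x)$ the whole family converges. This is the one place where uniqueness is essential; everything else is the routine polygonal bookkeeping — finiteness of $L$, confinement of minimizers, affineness away from vertices — already used repeatedly in Section 3.
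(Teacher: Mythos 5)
Your proposal is correct, and it takes a genuinely different route from the paper. The paper's proof mirrors its Theorem \ref{Thm 2.4}: it writes $w(x,t)=\partial_x\{tL(\frac{x-\hat y(x,t)}{t})+g(\hat y(x,t))\}$, expands by the chain rule to get $tL'(\cdot)(1-\frac{\partial_x\hat y}{t})+g'(\hat y)\,\partial_x\hat y$, and cancels using the first-order condition $L'(\frac{x-\hat y}{t})=g'(\hat y)$. That computation tacitly requires $\partial_x\hat y(x,t)$ to exist at the point in question, which the paper justifies only by appeal to the greatest-minimizer result (Theorem \ref{Thm 4.2}), an a.e.\ statement. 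You replace the chain-rule step with a two-sided envelope estimate: the upper bound from freezing the test point $\hat y(x)$, the lower bound from a minimizer $y_h$ for $x+h$, both difference quotients collapsing to exactly $h\,L'(\frac{x-\hat y}{t})$ once everything sits in the affine patch of the polygonal $L$, plus the compactness-and-uniqueness argument that $y_h\to\hat y(x)$. What your approach buys is that differentiability of $v$ in $x$ at the \emph{specific} point is proved rather than assumed, with uniqueness of the minimizer used exactly where it is needed (to force all subsequential limits of $y_h$ to coincide); the paper's approach is shorter but rests on regularity of the minimizer map that it does not fully establish pointwise. One cosmetic remark: the conclusion as printed in the lemma reads $L(\frac{x-\hat y}{t})$ where it should be $L'(\frac{x-\hat y}{t})$; your Step 1 correctly produces the derivative.
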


\begin{remark}
When we take the derivative of the minimum, note that the uniqueness of the
minimizer is the key issue. If there is more than one minimizer, as we vary
$y$ in order to take the derivative, one of the minimizers may become
irrelevant if the other minimum moves lower. This issue will be taken up in
the subsequent theorem.
\end{remark}

\begin{proof}
[Proof of Lemma \ref{Lem4.2}]Suppose that $x$ and $t$ are fixed and that
$\hat{y}\left(  x,t\right)  $ is the unique minimizer of $v\left(  x,t\right)
.$ Since $L,$ $g$ are differentiable, and $\hat{y}\left(  x,t\right)  $ is
also differentiable (since $\hat{y}$ is the only minimizer we can apply the
previous result on the greatest minimizer), we have the calculations:%
\[
0=\partial_{y}\left\{  tL\left(  \frac{x-y}{t}\right)  +g\left(  y\right)
\right\}  ,
\]
i.e.
\[
L^{\prime}\left(  \frac{x-\hat{y}\left(  x,t\right)  }{t}\right)  =g^{\prime
}\left(  \hat{y}\left(  x,t\right)  \right)  .
\]
Note that the minimum of $v$ will not occur at the minimum of $g$ unless $L$
has slope zero. We have then
\begin{align*}
w\left(  x,t\right)   &  :=\partial_{x}\min_{y\in\mathbb{R}}\left\{  tL\left(
\frac{x-y}{t}\right)  +g\left(  y\right)  \right\} \\
&  =\partial_{x}\left\{  tL\left(  \frac{x-\hat{y}\left(  x,t\right)  }%
{t}\right)  +g\left(  \hat{y}\left(  x,t\right)  \right)  \right\} \\
&  =tL^{\prime}\left(  \frac{x-\hat{y}\left(  x,t\right)  }{t}\right)
\cdot\left(  \frac{-\partial_{x}\hat{y}\left(  x,t\right)  }{t}+1\right)
+g^{\prime}\left(  \hat{y}(x,t)\right)  \partial_{x}\hat{y}\left(  x,t\right)
.
\end{align*}
The previous identity implies cancellation of the first and third terms,
yielding%
\[
w\left(  x,t\right)  =g^{\prime}\left(  \hat{y}\left(  x,t\right)  \right)  .
\]

\end{proof}

\begin{lemma}
\label{Lem 4.3}Let \thinspace$x,t$ be fixed, and assume the same conditions on
$L$ and $g.$ If $\hat{y}\left(  x,t\right)  $ is the unique minimizer of
$v\left(  x,t\right)  $ and occurs at a \textbf{vertex} of $L\left(
\frac{x-y}{t}\right)  ,$ then%
\[
w\left(  x,t\right)  =g^{\prime}\left(  \hat{y}\left(  x,t\right)  \right)  .
\]

\end{lemma}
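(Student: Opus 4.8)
The plan is to sidestep the chain-rule computation used in Lemma~\ref{Lem4.2}, which is not available here because $L$ fails to be differentiable at the argument $q^{\ast}:=(x-\hat y(x,t))/t$, and instead to estimate the difference quotient of the value function directly by means of an explicit competitor. As in Lemma~\ref{Lem4.2}, I treat $(x,t)$ as a point at which $u$ is differentiable in $x$, so that $w(x,t)=\partial_x u(x,t)$ genuinely exists; by Lemma~\ref{Lem C}(c) this holds for a.e.\ $x$. Write $V(x):=\min_{y\in\mathbb{R}}\{tL((x-y)/t)+g(y)\}=u(x,t)$ and abbreviate $\hat y:=\hat y(x,t)$, so that $V(x)=tL(q^{\ast})+g(\hat y)$.

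The only structural fact I would need is that translating the test point $y$ by the same amount as $x$ leaves the argument of $L$ unchanged. Concretely, for every $\delta\in\mathbb{R}$ the choice $y=\hat y+\delta$ is admissible in the minimization defining $V(x+\delta)$ — its value of $(x+\delta-(\hat y+\delta))/t=q^{\ast}$ lies in $[m_1,m_{N+1}]$, so $L$ is finite there — and it gives
\[
V(x+\delta)\;\le\;tL\!\left(\frac{(x+\delta)-(\hat y+\delta)}{t}\right)+g(\hat y+\delta)\;=\;tL(q^{\ast})+g(\hat y+\delta).
\]
Subtracting $V(x)=tL(q^{\ast})+g(\hat y)$ yields the two-sided estimate
\[
V(x+\delta)-V(x)\;\le\;g(\hat y+\delta)-g(\hat y)\qquad\text{for all }\delta\in\mathbb{R}.
\]
I would emphasize that this bound uses no information whatsoever about the minimizers of $v(\cdot\,;x+\delta)$ at the perturbed points; that is precisely what makes it effective, since those minimizers need not be unique once $x$ is perturbed.

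To finish I would divide by $\delta$ and let $\delta\to0$ from each side. For $\delta>0$ we get $\tfrac{V(x+\delta)-V(x)}{\delta}\le\tfrac{g(\hat y+\delta)-g(\hat y)}{\delta}$, and the right side tends to $g'(\hat y)$ as $\delta\to0^{+}$, so $w(x,t)\le g'(\hat y)$. For $\delta<0$ division by $\delta$ reverses the inequality, and the same right-hand limit now gives $w(x,t)\ge g'(\hat y)$. Since $w(x,t)$ exists by hypothesis, the two one-sided limits coincide with it, and therefore $w(x,t)=g'(\hat y(x,t))$, which is the assertion.

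The one place calling for care is the legitimacy of this final limit passage — that $\partial_x u(x,t)$ exists at the point in question — and here I would simply invoke Lemma~\ref{Lem C}(c), just as in Lemma~\ref{Lem4.2}. In contrast with Lemma~\ref{Lem4.2}, no differentiation or subdifferential calculus of $v$ at the vertex is required; as a consistency check one may note only that optimality of $\hat y$ forces $g'(\hat y)$ to lie between the left-hand and right-hand slopes of $L$ at $q^{\ast}$, in agreement with the conclusion.
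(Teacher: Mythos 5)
Your argument is correct, and it takes a genuinely different route from the paper. The paper's proof is geometric: since the minimizer sits at a vertex of $y\mapsto tL\left(\frac{x-y}{t}\right)$, that vertex translates rigidly with $x$ while its height stays fixed, so the minimum value changes only through $g$ evaluated along the translating vertex location $\hat y(x,t)=x-m_jt$, giving $\partial_x$ of the minimum equal to $g'(\hat y)$. You instead prove the two-sided competitor bound $V(x+\delta)-V(x)\le g(\hat y+\delta)-g(\hat y)$ for all $\delta$ (an envelope-theorem device: the right-hand side is a smooth majorant of $V$ touching it at $\delta=0$), and then squeeze the one-sided difference quotients at a point where $\partial_x u$ exists, invoking Lemma \ref{Lem C}(c) for a.e.\ differentiability. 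Both arguments are sound, but yours buys considerably more than the paper's: it nowhere uses the vertex hypothesis, the differentiability of $L$ at $q^{\ast}$, or even the uniqueness of the minimizer, so it subsumes Lemma \ref{Lem4.2} and Lemma \ref{Lem 4.3} in one stroke and, applied to an arbitrary minimizer, shows that at every point of differentiability of $u(\cdot,t)$ all minimizers share the same value of $g'$ --- which would short-circuit much of the case analysis by ``relevant'' versus ``irrelevant'' minimizers in Theorem \ref{Thm 4.3}. The only caveat, which you correctly flag, is that the bound alone gives $\limsup_{\delta\to0^{+}}\delta^{-1}\bigl(V(x+\delta)-V(x)\bigr)\le g'(\hat y)\le\liminf_{\delta\to0^{-}}\delta^{-1}\bigl(V(x+\delta)-V(x)\bigr)$, which is consistent with a concave kink; so the restriction to points where the derivative exists is genuinely needed and cannot be dropped. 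Your closing consistency check ($L'_{-}(q^{\ast})\le g'(\hat y)\le L'_{+}(q^{\ast})$ from first-order optimality at the vertex) is also correct.
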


\begin{proof}
[Proof of Lemma \ref{Lem 4.3}]Note that $L\left(  z\right)  <\infty$ if and
only if $z\in\left[  m_{1},m_{N+1}\right]  $,. Since the vertical coordinate
of the vertex of $L$ remains constant as one increases $x,$ the change in the minimum
is equal to $g^{\prime}\left(  y\right)  $ at that point. I.e., one has%
\begin{align*}
w\left(  x,t\right)   &  =\partial_{x}\min_{y\in\mathbb{R}}\left\{  tL\left(
\frac{x-y}{t}\right)  +g\left(  y\right)  \right\} \\
&  =\partial_{x}\left\{  tL\left(  \frac{x-\hat{y}\left(  x,t\right)  }%
{t}\right)  +g\left(  \hat{y}\left(  x,t\right)  \right)  \right\} \\
&  =g^{\prime}\left(  \hat{y}\left(  x,t\right)  \right)  .
\end{align*}
At the endpoints, $y=x-m_{N+1}t$ and $y=x-m_{1}t$ the situation is the same,
since as one varies $x$, the value of $\mathcal{L}$ on one side has an
infinite slope (see Figure \ref{PWF}). Note that this argument does not depend
on $\hat{y}$ being differentiable.
\end{proof}

\begin{theorem}
\label{Thm 4.3} Let $g$ be differentiable and $L$ convex polygonal as above.
For fixed $t>0$ and a.e. $x$ one has%
\begin{equation}
w\left(  x,t\right)  =g^{\prime}\left(  y^{\ast}\left(  x,t\right)  \right)  .
\end{equation}

\end{theorem}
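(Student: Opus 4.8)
The plan is to fix $t>0$ and to argue at an arbitrary point $x$ at which $w(x,t)=\partial_x u(x,t)$ exists, which is a.e.\ $x$: by Lemma \ref{Lem A} the function $u(\cdot,t)$ is Lipschitz in $x$ (uniformly in $t$), hence differentiable a.e.\ in $x$ by Rademacher's theorem; equivalently, by Theorem \ref{Thm 4.2} the non-decreasing map $x\mapsto y^{\ast}(x,t)$ is differentiable off a countable set. Write $y^{\ast}=y^{\ast}(x,t)$. By Theorem \ref{Thm 4.1} this is an actual minimizer, so $u(x,t)=tL\!\left(\frac{x-y^{\ast}}{t}\right)+g(y^{\ast})$ with $\frac{x-y^{\ast}}{t}\in[m_{1},m_{N+1}]$, hence the $L$-value is finite. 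The whole argument rests on a single observation: when $x$ is moved to $x\pm\delta$, if we simultaneously move the test point from $y^{\ast}$ to $y^{\ast}\pm\delta$, the argument $\frac{x-y}{t}$ of the (only partially finite) Legendre transform is unchanged, so the $L$-terms cancel exactly and we are left comparing values of $g$ alone.

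Concretely, I would first prove the upper bound: for $\delta>0$, evaluating the Hopf--Lax minimand for $u(x+\delta,t)$ at the test point $y=y^{\ast}+\delta$ gives $u(x+\delta,t)\le tL\!\left(\frac{x-y^{\ast}}{t}\right)+g(y^{\ast}+\delta)$, hence $u(x+\delta,t)-u(x,t)\le g(y^{\ast}+\delta)-g(y^{\ast})$; dividing by $\delta$ and letting $\delta\downarrow 0$ yields $w(x,t)=\partial_{x}^{+}u(x,t)\le g'(y^{\ast})$, using differentiability of $g$ at $y^{\ast}$ and of $u(\cdot,t)$ at $x$. Symmetrically, evaluating the minimand for $u(x-\delta,t)$ at $y=y^{\ast}-\delta$ gives $u(x,t)-u(x-\delta,t)\ge g(y^{\ast})-g(y^{\ast}-\delta)$, so $w(x,t)=\partial_{x}^{-}u(x,t)\ge g'(y^{\ast})$. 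Combining the two inequalities gives $w(x,t)=g'(y^{\ast}(x,t))$ for a.e.\ $x$, as claimed. For the special case of a \emph{unique} minimizer one may instead invoke Lemma \ref{Lem4.2} when $y^{\ast}$ lies on a locally linear piece of $L$ and Lemma \ref{Lem 4.3} when it lies at a vertex of $L$ or at an endpoint of $[x-m_{N+1}t,\,x-m_{1}t]$; the sandwich argument above has the advantage of covering the non-unique case in one stroke, where differentiating the $\min$ by the chain rule is illegitimate because the active minimizer can switch as $x$ varies.

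I expect the only genuine obstacle to be keeping the comparison legitimate despite $L$ being $+\infty$ off $[m_{1},m_{N+1}]$: one cannot perturb the argument of $L$ freely, and the synchronized shift $y\mapsto y\pm\delta$ is exactly what keeps it pinned inside the finite range $[m_{1},m_{N+1}]$. The second point needing care is that both one-sided difference quotients are genuinely required, which is why the identity is asserted only a.e.; at the countably many $x$ where $y^{\ast}(\cdot,t)$ jumps — a null set by the monotonicity from Theorem \ref{Thm 4.2} — $u(\cdot,t)$ need not be differentiable and only the one-sided inequalities survive. Everything else is a routine difference-quotient computation.
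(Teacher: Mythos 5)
Your proof is correct, and it takes a genuinely different route from the paper's. The paper proceeds by classifying minimizers into Type (A) (at vertices of $L$) and Type (B) (on the locally linear parts of $L$), observing that $g^{\prime}$ takes only finitely many values over the set of minimizers, and then reasoning about which minimizer remains ``relevant'' as $x$ varies --- the one whose minimand decreases fastest, which by convexity of $L$ is identified with the greatest minimizer. Your sandwich argument replaces all of this with the synchronized shift $y^{\ast}\mapsto y^{\ast}\pm\delta$, which cancels the $L$-terms exactly (and, as you note, keeps the argument of $L$ pinned inside $\left[  m_{1},m_{N+1}\right]$, so the failure of superlinearity causes no trouble), combined with Rademacher's theorem applied to the Lipschitz function $u\left(  \cdot,t\right)$ from Lemma \ref{Lem A}. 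This is shorter, avoids the case analysis and the ``relevance'' heuristics entirely, and in fact proves slightly more: at every $x$ where $u\left(  \cdot,t\right)$ is differentiable one gets $w\left(  x,t\right)  =g^{\prime}\left(  \hat{y}\right)$ for \emph{every} minimizer $\hat{y}$, so all minimizers share the same $g^{\prime}$-value off a null set --- which retroactively justifies the paper's assertion that disagreements among the $g^{\prime}\left(  \hat{y}_{j}\right)$ are confined to measure zero. What the paper's longer route buys is the structural description of the minimizers (finitely many classes, vertex versus segment) that is reused in Section 5 for the convergence of the mollified solutions $w^{\varepsilon}$; your argument does not supply that, but it is not needed for the statement at hand. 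One small correction: a non-decreasing function is continuous off a countable set but differentiable only off a \emph{null} set, so your parenthetical ``equivalently'' clause about $y^{\ast}\left(  \cdot,t\right)$ should be dropped in favor of the Rademacher route you already rely on; this does not affect the proof.
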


\begin{proof}
[Proof of Theorem \ref{Thm 4.3}]Since $g$ is differentiable (for all $x$) any
minimum of $\left\{  tL\left(  \frac{x-y}{t}\right)  +g\left(  y\right)
\right\}  $ must occur on a point, $\hat{y}\left(  x,t\right)  $ where $L$ has
a vertex (including the endpoints, see Figure \ref{TypesMinima}) or at a point
where $v\left(  y\right)  =0.$ There are two possible types of minimizers,
Type $\left(  A\right)  $ occurring at the vertices of $L$, and Type $\left(
B\right)  $ that occur at the differentiable (i.e., flat part of $L$). These
two types are illustrated in Figure \ref{TypesMinima}. From the lemmas above,
we know that when there is a single minimizer, $\hat{y},$ the conclusion
follows. \ Thus we consider the possibility of more than one minimizer,
$\hat{y}_{j}.$
\begin{figure}
[ptb]
\begin{center}
\includegraphics[width=\linewidth]{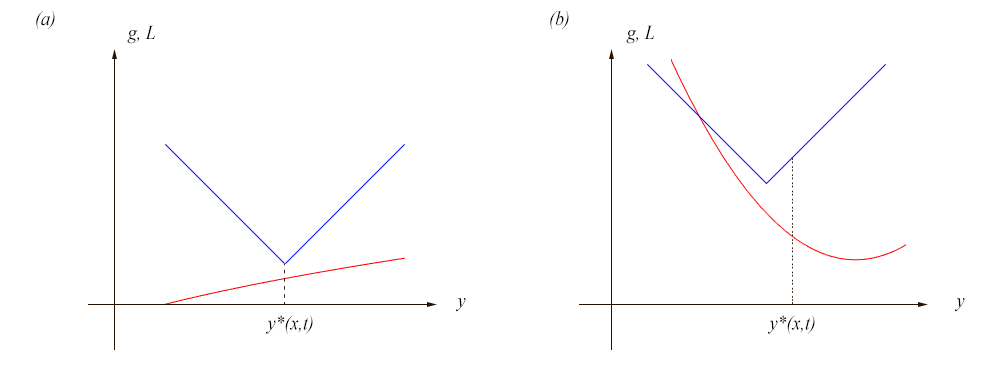}
\caption{The two types of minima that can occur for a piecewise flux function
and general initial data: (a) at a vertex of the Legendre transform $L\left(
q\right)  $ of the flux function $H\left(  q\right)  $; (b) at a point where
the Legendre transform $L\left(  q\right)  $ of the flux function $H\left(
q\right)  $ is locally differentiable.}
\label{TypesMinima}
\end{center}
\end{figure}

For a given $x,t$ it is clear that there can only be finitely many minimizers
of Type $\left(  A\right)  ,$ i.e., the number of vertices. Although there may
be infinitely many minimizers, $\hat{y}_{j}$ of the Type $\left(  B\right)  $
we know that $g^{\prime}\left(  \hat{y}\left(  x,t\right)  \right)
=L^{\prime}\left(  \frac{x-\hat{y}}{t}\right)  $ so that there are only
\textbf{finitely many values} of $g^{\prime}\left(  \hat{y}\left(  x,t\right)
\right)  $ regardless of the type of minimizer. For any $x,t$ we let $y^{\ast
}\left(  x,t\right)  $ be the largest of the minimizers, which is certainly
well-defined since there are only finitely many minimizers. From the earlier
theorem, we know that $y^{\ast}\left(  x,t\right)  $ is increasing in $x$ (for
fixed $t>0$) and differentiable for a.e. $x.$ In fact, if we focus on any
minimizer, $\hat{y}_{j}\left(  x,t\right)  $ we see that $\partial_{x}\hat
{y}_{j}\left(  x,t\right)  $ exists for either type of minimum. If it is Type
$\left(  A\right)  $ then as we vary $x,$ the vertex moves and the minimum
shifts along the curve of $g.$ Since $g$ is differentiable, the location of
the minimum varies smoothly in $x,$ so $\partial_{x}\hat{y}_{j}\left(
x,t\right)  $ exists. If it is Type $\left(  B\right)  $ then both $L$ and $g$
are differentiable, so it is certainly true that $\partial_{x}\hat{y}%
_{j}\left(  x,t\right)  $ exists.

For fixed $\left(  x,t\right)  $ and each of the finitely many values of
$g^{\prime}\left(  \hat{y}\left(  x,t\right)  \right)  $ we can determine the
minimum of $g^{\prime}\left(  \hat{y}\left(  x,t\right)  \right)  =:m$ (i.e.,
$m$ depending on $\left(  x,t\right)  $ ). First, they may correspond to
vertices. There are at most $M$ of those, since there can only be one
minimizer for each vertex. Then we have a class of minimizers for each segment
of $L\left(  \frac{x-y}{t}\right)  $, i.e. $M+1$ of those. We can take the
largest minimizer in each class, since $g^{\prime}$ will be the same in each
class. When we differentiate with respect to $x,$ we compare each of the $J$
(which is an integer between $1$ and $2M+1$ ) minimizers. It is the least of
these that will be relevant, since we are taking
\[
\partial_{x}\min_{y\in\mathbb{R}}\left\{  tL\left(  \frac{x-\hat{y}_{j}}%
{t}\right)  +g\left(  \hat{y}_{j}\right)  \right\}  .
\]
In other words, as we vary $x,$ we want to know how this minimum varies. Thus
a minimizer is irrelevant if $tL\left(  \frac{x-\hat{y}_{j}}{t}\right)
+g\left(  \hat{y}_{j}\right)  $ does not move down as much as another of the
$\hat{y}_{j}$ as $x$ increases. If $l\not =k$ and $g^{\prime}\left(  \hat
{y}_{k}\left(  x,t\right)  \right)  >g^{\prime}\left(  \hat{y}_{l}\left(
x,t\right)  \right)  $ then $\hat{y}_{k}\left(  x,t\right)  $ is irrelevant
for points beyond $x.$ On the other hand, if we have $g^{\prime}\left(
\hat{y}_{k}\left(  x,t\right)  \right)  =g^{\prime}\left(  \hat{y}_{l}\left(
x,t\right)  \right)  $ then we obtain the same change in the minimum, and we
can just take the larger of the two.

If we have minimizers that are of Type $\left(  B\right)  $ then it is the
furthest right segment that corresponds to the least $g^{\prime}$ since we
have the identity (see Lemma \ref{Lem4.2} above) $L^{\prime}\left(
\frac{x-\hat{y}\left(  x,t\right)  }{t}\right)  =g^{\prime}\left(  \hat
{y}\left(  x,t\right)  \right)  $.

In other words, either the $g^{\prime}$ values are identical on some interval,
in which case we have $w\left(  x,t\right)  =g^{\prime}\left(  \hat{y}%
_{j}\left(  x,t\right)  \right)  =g^{\prime}\left(  \hat{y}_{l}\left(
x,t\right)  \right)  $ for example, and we can take either minimizer and
obtain the same value for $w\left(  x,t\right)  ,$or one value is greater and
is thus irrelevant.

Alternatively, if the derivatives are different, then the smaller $g^{\prime}$
is the only one that is relevant. In either case we can take the largest value
of $\hat{y}$ and we have%
\[
w\left(  x,t\right)  =g^{\prime}\left(  y^{\ast}\left(  x,t\right)  \right)
.
\]

One remaining question is whether we have a largest minimizer $y^{\ast}\left(
x,t\right)  .$ The segments (i.e., lines of $L$) are on closed and bounded
intervals, the supremum of points $\hat{y}\left(  x,t\right)  $ exists, and
there is a sequence of points, $\tilde{y}_{m}$ that converges to this
supremum, $\tilde{y}$. Since $L$ and $g$ are continuous
\[
\lim_{m\rightarrow\infty}\left\{  tL\left(  \frac{x-\tilde{y}_{m}}{t}\right)
+g\left(  \tilde{y}_{m}\right)  \right\}  =tL\left(  \frac{x-\tilde{y}}%
{t}\right)  +g\left(  \tilde{y}\right)  .
\]
Thus, we must have that $\tilde{y}=y^{\ast}$ and is the greatest of the minimizers.
\end{proof}

\begin{theorem}
\label{Thm 4.4}Suppose that $g^{\prime}$ is BV. Then $w\left(  x,t\right)  $
is BV in $x$ for fixed $t.$
\end{theorem}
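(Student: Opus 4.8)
The plan is to read off $w$ from the identity $w(x,t)=g^{\prime}\!\left(y^{\ast}(x,t)\right)$ established in Theorem \ref{Thm 4.3} (valid for a.e.\ $x$ with $t$ fixed), and then to exploit the monotonicity of $x\mapsto y^{\ast}(x,t)$ from Theorem \ref{Thm 4.2}. The heart of the matter is the elementary observation that the composition of a BV function with a monotone function is again BV, with no increase in total variation; everything else is bookkeeping.

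First I would record the following lemma: if $\phi:[a,b]\to\mathbb{R}$ is non-decreasing and $h$ is of bounded variation on an interval containing $\phi([a,b])$, then $h\circ\phi$ is of bounded variation on $[a,b]$ and $\mathrm{Var}(h\circ\phi;[a,b])\le\mathrm{Var}\!\left(h;[\phi(a),\phi(b)]\right)$. The proof is immediate: for any partition $a=x_{0}<x_{1}<\cdots<x_{n}=b$ one has $\phi(a)=\phi(x_{0})\le\phi(x_{1})\le\cdots\le\phi(x_{n})=\phi(b)$, so the values $\phi(x_{i})$ form a non-decreasing finite sequence; deleting consecutive repetitions (whose corresponding summands vanish) yields a genuine partition of a subinterval of $[\phi(a),\phi(b)]$, whence $\sum_{i=1}^{n}\left|h(\phi(x_{i}))-h(\phi(x_{i-1}))\right|\le\mathrm{Var}\!\left(h;[\phi(a),\phi(b)]\right)$. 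Taking the supremum over partitions proves the lemma.

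Now fix $t>0$ and apply the lemma with $\phi(x)=y^{\ast}(x,t)$, which is non-decreasing by Theorem \ref{Thm 4.2}, and $h=g^{\prime}$, which is BV by hypothesis. From the proof of Theorem \ref{Thm 4.1} we know $y^{\ast}(x,t)\in[x-m_{N+1}t,\;x-m_{1}t]$, so on any bounded interval $[a,b]$ the function $\phi$ takes values in a bounded interval and the lemma gives that $x\mapsto g^{\prime}(y^{\ast}(x,t))$ is of bounded variation on $[a,b]$, with variation dominated by $\mathrm{Var}\!\left(g^{\prime};[y^{\ast}(a,t),y^{\ast}(b,t)]\right)$ (and globally BV when $g^{\prime}$ is globally BV). Thus the explicit representative $g^{\prime}\circ y^{\ast}(\cdot,t)$ is BV.

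Finally, since $w(\cdot,t)=g^{\prime}(y^{\ast}(\cdot,t))$ for a.e.\ $x$ by Theorem \ref{Thm 4.3}, the function $w(\cdot,t)$ agrees almost everywhere with a BV function, so its essential variation is finite and $w(\cdot,t)$ is BV in $x$. The one point deserving care — the main obstacle, such as it is — is precisely this last step: $w$ is only defined a.e.\ (as $\partial_{x}u$, with $u$ merely Lipschitz in $x$ by Lemma \ref{Lem A}), so the relevant notion is the essential variation, and one must observe that it is controlled by the pointwise variation of the concrete representative $g^{\prime}\circ y^{\ast}$. It is exactly the monotonicity of $y^{\ast}$ furnished by Theorem \ref{Thm 4.2} that makes this pointwise variation finite, requiring nothing of $g^{\prime}$ beyond membership in BV.
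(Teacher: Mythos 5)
Your proof is correct and rests on the same two pillars as the paper's: the representation $w(\cdot,t)=g^{\prime}\bigl(y^{\ast}(\cdot,t)\bigr)$ from Theorem \ref{Thm 4.3} and the monotonicity of $x\mapsto y^{\ast}(x,t)$ from Theorem \ref{Thm 4.2}. Where you differ is in how you certify that a BV function composed with a non-decreasing function is BV: the paper does this in one line via the Jordan decomposition $g^{\prime}=h_{1}-h_{2}$ with $h_{i}$ increasing, noting that $h_{i}\circ y^{\ast}$ is increasing and hence the difference is BV, whereas you prove a direct partition lemma. Your route buys two things the paper's does not: a quantitative bound $\mathrm{Var}\bigl(g^{\prime}\circ y^{\ast};[a,b]\bigr)\le\mathrm{Var}\bigl(g^{\prime};[y^{\ast}(a,t),y^{\ast}(b,t)]\bigr)$ rather than mere finiteness, and an explicit treatment of the fact that $w$ is only defined a.e.\ (so that "BV" must be read as finiteness of the essential variation, controlled by the pointwise variation of the concrete representative $g^{\prime}\circ y^{\ast}$) — a point the paper's proof passes over silently. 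The Jordan-decomposition argument is shorter and equally valid; both hinge on exactly the same monotonicity input.
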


\begin{proof}
[Proof of \ref{Thm 4.4}]Since $g^{\prime}$ is BV it can be written as the
difference of two increasing functions, $h_{1}$ and $h_{2}.$ Then
$h_{i}\left(  y_{\ast}\left(  x,t\right)  \right)  $ are increasing (since
they are increasing functions of increasing functions) and hence $g^{\prime
}\left(  y_{\ast}\left(  x,t\right)  \right)  $ is BV.
\end{proof}

\section{Approximation Solutions of the Sharp Vertex Problem With the Smoothed
Version}

It is important to relate the solutions of the conservation law with the
polygonal flux $H$ to the solutions $w^{\varepsilon}$corresponding to the
smoothed and superlinearized flux function $\mathcal{H}^{\varepsilon}$. In
particular, $\mathcal{H}^{\varepsilon}$ is also uniformly convex, and the
hypotheses of the classical theorems are satisfied. Throughout this section we
will assume $g\in C^{1}$.

There are two basic parts to this section. First, we show that $H$ and
$\mathcal{H}^{\varepsilon}$ have Legendre transforms that are pointwise
separated by $C\varepsilon,$ i.e., $\left\vert L\left(  p\right)
-\mathcal{L}^{\varepsilon}\left(  p\right)  \right\vert \leq C\varepsilon$
also, and hence a similar identity for $f\left(  y;x,t\right)  =tL\left(
\frac{x-y}{t}\right)  $. We will also show that if there is a unique pair of
minimizers, $y^{\varepsilon}\left(  x,t\right)  $ and $y\left(  x,t\right)  $
then they also separated by at most $\tilde{C}\varepsilon.$

Second, we analyze $\left\vert w^{\varepsilon}\left(  x,t\right)  -w\left(
x,t\right)  \right\vert $ for a single minimizer of $tL\left(  \frac{x-y}%
{t}\right)  +g\left(  y\right)  $ and demonstrate that the result can be
extended for a minimum that is attained by multiple, even uncountably many,
minimizers $\left\{  y_{\alpha}\right\}  $.

\subsection{\textbf{Construction of smoothed and uniformly convex }$H$}

Given a function that is locally integrable, one can mollify it using a
standard convolution (\cite{EV}, p. 741). Alternatively, we will use a
mollification as in \cite{GT} in which the difference between a piecewise
linear function and its mollification vanishes outside a small neighborhood of
each vertex.

\begin{lemma}
[Smoothing]\label{Lem Smoothing}Suppose that $G\left(  y\right)  $ is
piecewise linear and satisfies%
\begin{equation}
G^{\prime}\left(  y\right)  =\left\{
\begin{array}
[c]{ccc}%
\alpha<0 & if & y<y_{m}\\
\gamma>0 & if & y>y_{m},
\end{array}
\right.
\end{equation}
$g\left(  y\right)  =\beta y$ for some $\beta \in \left(\alpha, \gamma\right)$ and
$G_{\varepsilon}\left(  y\right)  $ is any function that satisfies%
\begin{equation}
\sup_{y\in A}\left\vert G_{\varepsilon}\left(  y\right)  -G\left(  y\right)
\right\vert \leq C_{1}\varepsilon
\end{equation}
for any given compact set $A.$ Let $y_{m}^{\varepsilon}:=\arg\min\left\{
G_{\varepsilon}\left(  y\right)  -g\left(  y\right)  \right\}  .$ Then one
has$\ y_{m}:=\arg\min\left\{  G\left(  y\right)  -g\left(  y\right)  \right\}
$ and
\begin{equation}
\left\vert y_{m}^{\varepsilon}-y_{m}\right\vert \leq C_{2}\varepsilon
\end{equation}
where $C_{2}$ depends on $A,\alpha,\beta,\gamma$.
\end{lemma}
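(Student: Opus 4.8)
The strategy is to exploit that the unperturbed problem $G(y)-g(y)$ has a sharp, well-localized minimum at the vertex $y_m$ — it decreases with slope $\alpha-\beta<0$ to the left of $y_m$ and increases with slope $\gamma-\beta>0$ to the right — and then show that a $C_1\varepsilon$-uniform perturbation of $G$ cannot move the minimizer of the perturbed objective farther than a constant multiple of $\varepsilon$. First I would record the key inequality for the unperturbed function: writing $\Phi(y):=G(y)-g(y)$, convexity and the prescribed slopes give $\Phi(y)-\Phi(y_m)\ge (\beta-\alpha)(y_m-y)$ for $y\le y_m$ and $\Phi(y)-\Phi(y_m)\ge (\gamma-\beta)(y-y_m)$ for $y\ge y_m$, so with $\mu:=\min\{\beta-\alpha,\gamma-\beta\}>0$ one has the linear growth bound
\begin{equation}
\Phi(y)-\Phi(y_m)\ \ge\ \mu\,|y-y_m|\qquad\text{for all }y.
\end{equation}
This confirms in passing that $y_m=\arg\min\{G(y)-g(y)\}$ is the (unique) minimizer, as asserted.

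Next I would transfer this to the perturbed objective $\Phi_\varepsilon(y):=G_\varepsilon(y)-g(y)$. Since $g$ is unchanged, $|\Phi_\varepsilon(y)-\Phi(y)|=|G_\varepsilon(y)-G(y)|\le C_1\varepsilon$ on the compact set $A$. Let $y_m^\varepsilon$ be a minimizer of $\Phi_\varepsilon$ (one would first note it lies in $A$ for $\varepsilon$ small, since $\Phi$ grows linearly and $\Phi_\varepsilon$ is within $C_1\varepsilon$ of it, so the minimum cannot escape a fixed neighbourhood; if $A$ is already chosen large enough this is immediate). Then the standard sandwich estimate gives
\begin{equation}
\Phi(y_m^\varepsilon)-\Phi(y_m)\ \le\ \Phi_\varepsilon(y_m^\varepsilon)+C_1\varepsilon-\big(\Phi_\varepsilon(y_m)-C_1\varepsilon\big)\ \le\ 2C_1\varepsilon,
\end{equation}
using $\Phi_\varepsilon(y_m^\varepsilon)\le\Phi_\varepsilon(y_m)$. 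Combining with the linear growth bound,
\begin{equation}
\mu\,|y_m^\varepsilon-y_m|\ \le\ \Phi(y_m^\varepsilon)-\Phi(y_m)\ \le\ 2C_1\varepsilon,
\end{equation}
so $|y_m^\varepsilon-y_m|\le (2C_1/\mu)\,\varepsilon=:C_2\varepsilon$, and $C_2$ depends only on $C_1$ and on $\alpha,\beta,\gamma$ through $\mu$, with the dependence on $A$ entering only in guaranteeing $y_m^\varepsilon\in A$ and in the constant $C_1$.

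The main obstacle is not the estimate itself but the bookkeeping around the set $A$ and the existence/location of $y_m^\varepsilon$: one needs $A$ to be large enough that the perturbed minimizer actually lands inside it (otherwise the bound $|G_\varepsilon-G|\le C_1\varepsilon$ has no content at $y_m^\varepsilon$), and one needs $G_\varepsilon$ to attain its minimum over $\Phi_\varepsilon$ at all, which for a merely "any function within $C_1\varepsilon$" hypothesis requires a mild lower-semicontinuity or continuity remark — in the intended application $G_\varepsilon$ is a smooth mollification, so continuity plus the coercivity inherited from $\Phi$'s linear growth settles it. I would handle this by fixing at the outset a compact interval $A=[y_m-R,y_m+R]$ with $R$ chosen so that $\mu R > 4C_1\varepsilon_0$ for all $\varepsilon\le\varepsilon_0$ under consideration; then any near-minimizer of $\Phi_\varepsilon$ is forced into the interior of $A$, and the argument above closes without circularity. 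Everything else is the routine two-line convexity computation already sketched.
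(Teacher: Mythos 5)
Your proposal is correct and follows essentially the same route as the paper's proof: both compare $\Phi(y_m^\varepsilon)$ to $\Phi(y_m)$ via the sandwich inequality $\Phi(y_m^\varepsilon)-C_1\varepsilon\le\Phi_\varepsilon(y_m^\varepsilon)\le\Phi_\varepsilon(y_m)\le\Phi(y_m)+C_1\varepsilon$ and then convert the resulting $2C_1\varepsilon$ value gap into a distance bound using the linear (V-shaped) growth of $\Phi$ away from $y_m$. Your version is in fact slightly more careful than the paper's, treating both sides of the vertex uniformly via $\mu=\min\{\beta-\alpha,\gamma-\beta\}$ and addressing the existence and localization of $y_m^\varepsilon$, which the paper leaves implicit.
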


\begin{proof}
[Proof of Lemma \ref{Lem Smoothing}]The first assertion that $y_{m}$ is the
argmin follows from immediately from the properties assumed for $g$ and
$G^{\prime}.$ To prove the second assertion, i.e., the bound $\left\vert
y_{m}^{\varepsilon}-y_{m}\right\vert \leq C\varepsilon,$ one defines
$\Phi\left(  y\right)  :=G\left(  y\right)  -g\left(  y\right)  $ and
$\Phi_{\varepsilon}\left(  y\right)  :=G_{\varepsilon}\left(  y\right)
-g\left(  y\right)  .$ The graph of $\Phi:=G-g$ is a v-shape with $\Phi\left(
0\right)  =0$. We can draw parallel lines $C\varepsilon$ above and below
$\Phi$ and observe that $\varepsilon\Phi_{\varepsilon}$ lies within these
lines, as illustrated in Figure \ref{LemSFig}.
\begin{figure}
[ptb]
\begin{center}
\includegraphics[width=\linewidth]{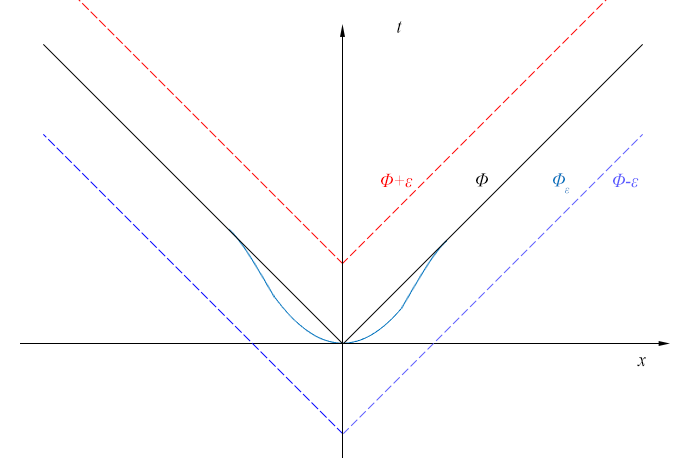}
\caption{Standard mollification of the flux function $\Phi.$ The mollifiation
$\Phi_{\varepsilon}$ is shown in solid light blue, and remains within the
bounds $\Phi\pm\varepsilon$, shown in dashed red and dashed blue. Other
mollifications are also possible, such as in \cite{GT} where the values of
$\Phi_{\varepsilon}$ and $\Phi$ coincide exactly outside an interval of
measure $\varepsilon$.}
\label{LemSFig}
\end{center}
\end{figure}

Note that a necessary condition for $y_{m}^{\varepsilon}$ to be the argmin of
$\Phi_{\varepsilon}$ is that
\begin{equation}
\Phi_{\varepsilon}\left(  y_{m}^{\varepsilon}\right)  \leq\Phi_{\varepsilon
}\left(  y_{m}\right)
\end{equation}
since $\Phi_{\varepsilon}\left(  y_{m}^{\varepsilon}\right)  $ must be below
all $\Phi_{\varepsilon}\left(  y\right)  $. Both of the quantities
$\Phi_{\varepsilon}\left(  y_{m}^{\varepsilon}\right)  $ and $\Phi
_{\varepsilon}\left(  y_{m}\right)  $ are within the bounds in the graph above
so that we can write this inequality in the form%
\begin{equation}
\Phi\left(  y_{m}^{\varepsilon}\right)  -C\varepsilon\leq\Phi_{\varepsilon
}\left(  y_{m}^{\varepsilon}\right)  \leq\Phi_{\varepsilon}\left(
y_{m}\right)  \leq\Phi\left(  y_{m}\right)  +C\varepsilon
\end{equation}

Thus $\Phi\left(  y_{m}^{\varepsilon}\right)  -\Phi\left(  y_{m}\right)
\leq2C\varepsilon,$ i.e., by definition of $\Phi$ one has%
\begin{equation}
\left(  \alpha-\beta\right)  \left(  y_{m}^{\varepsilon}-y_{m}\right)
\leq2C\varepsilon.
\end{equation}

So for $y_{m}^{\varepsilon}>y_{m}$ we have the restriction%
\begin{equation}
y_{m}^{\varepsilon}-y_{m}\leq\frac{2C}{\alpha-\beta}\varepsilon.
\end{equation}
In a similar way we obtain a restriction in the other direction and prove the lemma.
\end{proof}

As discussed above, given any $H\left(  q\right)  $ that is piecewise linear
with finitely many break points, one can construct an approximation
$\mathcal{H}_{\varepsilon}\left(  q\right)  $ that has the following properties:

(a) $\left\vert \mathcal{H}_{\varepsilon}\left(  q\right)  -H\left(  q\right)
\right\vert \leq C_{1}\varepsilon$ for all $q\in\mathbb{R}$, and (b)
$\mathcal{H}_{\varepsilon}\left(  q\right)  -H\left(  q\right)  =0$ if
$\left\vert q-c_{i}\right\vert >C_{2}\varepsilon$ \ where $c_{i}$ is any break
point of $H.$

Subsequently, all references to smoothing will mean that conditions (a) and
(b) are satisfied. Note that $H_{\varepsilon}\left(  q\right)  $ is convex
(but not necessarily uniformly convex). Since it is smooth, we have
$\mathcal{H}_{\varepsilon}^{\prime\prime}\left(  q\right)  \geq0.$

In order to have uniform convexity we can add to $\mathcal{H}_{\varepsilon
}\left(  \cdot\right)  $ a term $\delta q^{2}$ where $0<\delta\leq\varepsilon$
and define%
\begin{equation}
\mathcal{H}_{\varepsilon,\delta}\left(  q\right)  :=H_{\varepsilon}\left(
q\right)  +\delta q^{2}.
\end{equation}
Then $\mathcal{H}_{\varepsilon,\delta}^{\prime\prime}\left(  q\right)
>\delta>0$ so that $\mathcal{H}_{\varepsilon,\delta}$ is uniformly convex and
has two continuous derivatives.

\begin{lemma}
[\textbf{Approximate Legendre Transform}]\label{Lem Legendre}Let
\begin{equation}
L\left(  p\right)  :=\sup_{q\in\mathbb{R}}\left\{  pq-H\left(
q\right)  \right\}  ,\ \ \mathcal{L}_{\varepsilon,\delta}\left(  p\right)
:=\sup_{q\in\mathbb{R}}\left\{  pq-\mathcal{H}_{\varepsilon,\delta}\left(
q\right)  \right\}  .
\end{equation}
and let $A$ be a compact set in $\mathbb{R}$. For any $p\in A$ one has
\begin{equation}
\left\vert \mathcal{L}_{\varepsilon,\delta}\left(  p\right)  -L%
\left(  p\right)  \right\vert \leq C\varepsilon.
\end{equation}

\end{lemma}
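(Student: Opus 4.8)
The plan is to estimate the two one-sided differences $\mathcal{L}_{\varepsilon,\delta}(p)-L(p)$ and $L(p)-\mathcal{L}_{\varepsilon,\delta}(p)$ separately, using only the pointwise smoothing bound $|\mathcal{H}_\varepsilon(q)-H(q)|\le C_1\varepsilon$ from Lemma \ref{Lem Smoothing}, the inequality $0<\delta\le\varepsilon$, and the fact that $H$ is affine (with the extreme slopes $m_1,m_{N+1}$) outside the interval spanned by its break points. The statement is only nonvacuous for $p$ in the effective domain $[m_1,m_{N+1}]$ of $L$, since $L(p)=+\infty$ elsewhere while $\mathcal{L}_{\varepsilon,\delta}$ is finite; so I work on $A\subseteq[m_1,m_{N+1}]$.

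First I would dispose of the easy direction $\mathcal{L}_{\varepsilon,\delta}(p)\le L(p)+C_1\varepsilon$, valid for every $p$. Since $\mathcal{H}_{\varepsilon,\delta}(q)=\mathcal{H}_\varepsilon(q)+\delta q^2\ge \mathcal{H}_\varepsilon(q)\ge H(q)-C_1\varepsilon$ for all $q$, we get $pq-\mathcal{H}_{\varepsilon,\delta}(q)\le (pq-H(q))+C_1\varepsilon\le L(p)+C_1\varepsilon$; taking the supremum over $q$ gives the claim, with no use of compactness.

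For the reverse direction I would exploit that for $p\in[m_1,m_{N+1}]$ the supremum defining $L(p)$ is attained at a point $q^\ast(p)$ that may be chosen in the fixed compact interval $[c_1,c_N]$. This is the one genuinely case-sensitive point, and the place where non-superlinearity of $H$ could bite: for $p\in(m_1,m_{N+1})$ the map $q\mapsto pq-H(q)$ is coercive (its slope is $p-m_{N+1}<0$ on $(c_N,\infty)$ and $p-m_1>0$ on $(-\infty,c_1)$), hence nonincreasing on $[c_N,\infty)$ and nondecreasing on $(-\infty,c_1]$, so a maximizer lies in $[c_1,c_N]$; at the degenerate endpoints $p=m_1,m_{N+1}$ the maximizing set is a ray, but it still meets $[c_1,c_N]$ (at $c_1$, resp.\ $c_N$). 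Setting $M:=\max\{|c_1|,|c_N|\}$ and $q^\ast=q^\ast(p)$, we then estimate
\begin{align*}
\mathcal{L}_{\varepsilon,\delta}(p) &\ge pq^\ast-\mathcal{H}_{\varepsilon,\delta}(q^\ast)=pq^\ast-\mathcal{H}_\varepsilon(q^\ast)-\delta (q^\ast)^2\\
&\ge \bigl(pq^\ast-H(q^\ast)\bigr)-C_1\varepsilon-\delta M^2 = L(p)-C_1\varepsilon-\delta M^2\ge L(p)-(C_1+M^2)\varepsilon,
\end{align*}
where the final step uses $\delta\le\varepsilon$.

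Combining the two bounds with $C:=C_1+M^2$ yields $|\mathcal{L}_{\varepsilon,\delta}(p)-L(p)|\le C\varepsilon$ for all $p\in[m_1,m_{N+1}]$, in particular for all $p$ in any compact $A$ on which $L$ is finite; note $C$ depends only on $C_1$ and on the break points of $H$, not on $\varepsilon$ or $\delta$. The main obstacle is precisely the step of confining the maximizer of the $L$-variational problem to a fixed compact set despite the absence of superlinearity, which is resolved by the monotonicity of $q\mapsto pq-H(q)$ on the two unbounded affine pieces of $H$.
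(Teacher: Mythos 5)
Your proof is correct, but it follows a genuinely different route from the paper's. The paper argues through the \emph{maximizers}: it applies the preceding Smoothing Lemma to $G_{\varepsilon}\left(  q\right)  :=\mathcal{H}_{\varepsilon,\delta}\left(  q\right)  -pq$ (with the specific choice $\delta=\varepsilon^{2}$) to conclude that the argmaxes $q_{m}$ and $q_{m}^{\varepsilon}$ of the two variational problems satisfy $\left\vert q_{m}-q_{m}^{\varepsilon}\right\vert \leq C\varepsilon$, and then converts this into a bound on the values via the triangle inequality, using $\left\vert p\right\vert$, the Lipschitz constant of $H$, and the uniform bound $\left\vert \mathcal{H}_{\varepsilon,\varepsilon^{2}}-H\right\vert \leq C\varepsilon$. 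You never compare maximizers at all: you run a one-sided supremum comparison in each direction, where $\mathcal{L}_{\varepsilon,\delta}\leq L+C_{1}\varepsilon$ is immediate from $\mathcal{H}_{\varepsilon,\delta}\geq H-C_{1}\varepsilon$, and the reverse direction only requires that a maximizer for $L\left(  p\right)$ can be taken in the fixed compact $\left[  c_{1},c_{N}\right]$ — which you justify correctly by the monotonicity of $q\mapsto pq-H\left(  q\right)$ on the two unbounded affine pieces — so that the perturbation $\delta q^{2}$ contributes at most $\delta M^{2}\leq\varepsilon M^{2}$. Your version is more elementary and somewhat more robust: it works for any $0<\delta\leq\varepsilon$ rather than the particular choice $\delta=\varepsilon^{2}$, it produces an explicit constant, and it sidesteps the uniqueness of the argmin that the Smoothing Lemma formally presupposes (and which can fail when $p$ equals one of the slopes $m_{i}$, making the maximizing set a whole segment). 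What the paper's route buys in exchange is the quantitative statement $\left\vert q_{m}-q_{m}^{\varepsilon}\right\vert \leq C\varepsilon$ about the maximizers themselves, which is reused later in Section 5 when comparing the minimizers $y^{\ast}$ and $y_{\varepsilon}^{\ast}$ of the Hopf--Lax functionals; a value-only comparison does not provide that. Both arguments must restrict $p$ to the effective domain $\left[  m_{1},m_{N+1}\right]$ of $L$ (the paper does so in the accompanying remark, you do so explicitly), since outside it $L=+\infty$ while $\mathcal{L}_{\varepsilon,\delta}$ is finite.
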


\begin{remark}
Note that since $H$ has finite max and min slopes, outside this range we have
$\mathcal{L}\left(  p\right)  :=\sup_{q\in\mathbb{R}}\left\{  pq-H\left(
q\right)  \right\}  =\infty$ . For $\mathcal{L}_{\varepsilon,\delta}\left(
p\right)  $ we will have a very large, though not infinite value when $p$
exceeds this range, so $p$ outside this range is also irrelevant in terms of
minimizers. Thus, without loss of generality we can restrict our attention to
$p$ in a compact set.
\end{remark}

\begin{proof}
[Proof of Lemma \ref{Lem Legendre}]Note that $\arg\max_{q}\left\{  pq-H\left(
q\right)  \right\}  =\arg\min_{q}\left\{  H\left(  q\right)  -pq\right\}  $
and similarly for $H_{\varepsilon,\delta}\left(  q\right)  $. We then use the
Lemma above by defining
\[
G_{\varepsilon}\left(  q\right)  :=\mathcal{H}_{\varepsilon,\delta}\left(
q\right)  -pq\ \ with\ \delta:=\varepsilon^{2}<1.
\]
We can then apply the previous Lemma, noting that $\left\vert \mathcal{H}%
_{\varepsilon,\varepsilon^{2}}\left(  q\right)  -H\left(  q\right)
\right\vert \leq C\varepsilon$ implies
\[
\left\vert G_{\varepsilon}\left(  q\right)  -G\left(  q\right)  \right\vert
\leq C\varepsilon\ ,
\]
to conclude that with $q_{m}:=\arg\min G\left(  q\right)  $ and $q_{m}%
^{\varepsilon}:=\arg\min G_{\varepsilon}\left(  q\right)  $%
\[
\left\vert q_{m}-q_{m}^{\varepsilon}\right\vert \leq C\varepsilon.
\]
Since $G$ and $G_{\varepsilon}$ are Lipschitz, one has%
\begin{align*}
\left\vert \mathcal{L}_{\varepsilon,\varepsilon^{2}}\left(  p\right)
-\mathcal{L}\left(  p\right)  \right\vert  &  =\left\vert \sup_{q\in
\mathbb{R}}\left\{  pq-\mathcal{H}_{\varepsilon,\varepsilon^{2}}\left(
q\right)  \right\}  -\sup_{q\in\mathbb{R}}\left\{  pq-H\left(  q\right)
\right\}  \right\vert \\
&  =\left\vert \left\{  pq_{m}^{\varepsilon}-\mathcal{H}_{\varepsilon
,\varepsilon^{2}}\left(  q_{m}^{\varepsilon}\right)  \right\}  -\left\{
pq_{m}-H\left(  q_{m}\right)  \right\}  \right\vert \\
&  \leq\left\vert p\right\vert \ \left\vert q_{m}^{\varepsilon}-q_{m}%
\right\vert +\left\vert \mathcal{H}_{\varepsilon,\varepsilon^{2}}\left(
q_{m}^{\varepsilon}\right)  -H\left(  q_{m}\right)  \right\vert \\
&  \leq\left\vert p\right\vert \ \left\vert q_{m}^{\varepsilon}-q_{m}%
\right\vert +\left\vert \mathcal{H}_{\varepsilon,\varepsilon^{2}}\left(
q_{m}^{\varepsilon}\right)  -H\left(  q_{m}^{\varepsilon}\right)  \right\vert
\\
&  +\left\vert H\left(  q_{m}^{\varepsilon}\right)  -H\left(  q_{m}\right)
\right\vert \\
&  \leq\tilde{C}\varepsilon
\end{align*}
\newline since we noted above that $\left\vert \mathcal{H}_{\varepsilon
,\varepsilon^{2}}\left(  q\right)  -H\left(  q\right)  \right\vert \leq
C\varepsilon$ and $H$ is Lipschitz. This proves the Lemma.
\end{proof}

\subsection{Proving Convergence of Solutions $w^{\varepsilon}\rightarrow w$}

We define%
\[
f\left(  y;x,t\right)  =tL\left(  \frac{x-y}{t}\right)  ,\ \ f_{\varepsilon
,\delta}\left(  y;x,t\right)  =t\mathcal{L}_{\varepsilon,\delta}\left(
\frac{x-y}{t}\right)
\]
and consider any fixed $t\in\left[  0,T\right]  $ for some $T\in\mathbb{R}%
^{+}$ and $x,y$ on bounded intervals. The bounds on $\mathcal{L}$ and
$\mathcal{L}_{\varepsilon,\delta}$ then imply
\[
\left\vert f\left(  y;x,t\right)  -f_{\varepsilon,\delta}\left(  y;x,t\right)
\right\vert \leq C\varepsilon.
\]
Next, we claim that if there is a single minimizer for $f\left(  y;x,t\right)
+g\left(  y\right)  $, denoted by $y^{\ast}\left(  x,t\right)  $ and
$y_{\varepsilon}^{\ast}\left(  x,t\right)  $ for $f_{\varepsilon}\left(
y;x,t\right)  +g\left(  y\right)  $, then $y_{\varepsilon}^{\ast}\left(
x,t\right)  \rightarrow y^{\ast}\left(  x,t\right)  $ a.e. in $x$. This is
proven in the same way as Lemma \ref{Lem Legendre}, and analyzed in Section 4.
Note that if the minimum is not within $C\varepsilon$ of the vertex (i.e. not
near a minimum of $g$), then the conclusion will be immediate since the
mollification does not extend more than a distance $C\varepsilon$ from the vertex.

Now, we want to compare the solution $w\left(  x,t\right)  $ with
$w^{\varepsilon}\left(  x,t\right)  $ and assert that for any $t>0$ and a.e.
$x$ one has
\[
\lim_{\varepsilon\rightarrow0}w^{\varepsilon}\left(  x,t\right)  =w\left(
x,t\right)  .
\]
If we assumed that there is a single minimizer, $y^{\ast}\left(  x,t\right)  $
then the result would be clear from the relations%
\[
w\left(  x,t\right)  =g^{\prime}\left(  y^{\ast}\left(  x,t\right)  \right)
,\ w^{\varepsilon}\left(  x,t\right)  =g^{\prime}\left(  y_{\varepsilon}%
^{\ast}\left(  x,t\right)  \right)  ,\
\]
and the fact that $g^{\prime}$ is continuous in $x$ and $y_{\varepsilon}%
^{\ast}\left(  x,t\right)  \rightarrow y^{\ast}\left(  x,t\right)  $.

The subtlety is when we have more than one minimizer. Note from the earlier
material on the sharp problem, we only need to consider finitely many
minimizers, since there are only finitely many vertices, and only finitely
many segments of $L.$ The minimizers can be at the vertices, or they may be on
the segments. But if they are on the segments, the minimizers are in finitely
many classes that correspond to the same value of $g^{\prime}\left(  \hat
{y}\left(  x,t\right)  \right)  =L^{\prime}\left(  \frac{x-\hat{y}\left(
x,t\right)  }{t}\right)  .$ Thus we can choose the larger of these two
minimizers, for example.

We consider the two Types $\left(  A\right)  $ and $\left(  B\right)  $ and
suppose first that there are two minimizers of the same type.

\textbf{Type (A)}. Suppose that for some $\left(  x_{0},t\right)  $ we have
$\hat{y}_{1}$ and $\hat{y}_{2}$ that are both Type $\left(  A\right)  $
minimizers, i.e., at different vertices of $L$. If $g^{\prime}\left(  \hat
{y}_{1}\right)  <$ $g^{\prime}\left(  \hat{y}_{2}\right)  $ then at some
$\varepsilon,$ the mollified versions will also satisfy $g^{\prime}\left(
\hat{y}_{1}^{\varepsilon}\right)  <$ $g^{\prime}\left(  \hat{y}_{2}%
^{\varepsilon}\right)  .$ This means that for $x\not =x_{0}$ only $\hat{y}%
_{1}$ and $\hat{y}_{1}^{\varepsilon}$ will be relevant. Analogously, we have
the opposite inequality. If we have $g^{\prime}\left(  \hat{y}_{1}\right)  =$
$g^{\prime}\left(  \hat{y}_{2}\right)  $, then we may not have $g^{\prime
}\left(  \hat{y}_{1}^{\varepsilon}\right)  =$ $g^{\prime}\left(  \hat{y}%
_{2}^{\varepsilon}\right)  .$ However, since $g^{\prime}$ is continuous, and
we know that
\[
\hat{y}_{1}^{\varepsilon}\rightarrow\hat{y}_{1}\ \ and\ \ \hat{y}%
_{2}^{\varepsilon}\rightarrow\hat{y}_{2}%
\]
we have then%
\[
g^{\prime}\left(  \hat{y}_{1}^{\varepsilon}\right)  \rightarrow g^{\prime
}\left(  \hat{y}_{1}\right)  \ \ and\ \ g^{\prime}\left(  \hat{y}%
_{2}^{\varepsilon}\right)  \rightarrow g^{\prime}\left(  \hat{y}_{2}\right)
\ .
\]
Thus we have from our basic results $w\left(  x,t\right)  =g^{\prime}\left(
y^{\ast}\left(  x,t\right)  \right)  $ and $w^{\varepsilon}\left(  x,t\right)
=g^{\prime}\left(  y_{\varepsilon}^{\ast}\left(  x,t\right)  \right)  $, that%
\[
w^{\varepsilon}\left(  x,t\right)  \rightarrow w\left(  x,t\right)  .
\]
In Figure \ref{MinimizersFig}(a), we illustrate the case where $g^{\prime
}\left(  y_{1}\right)  =g^{\prime}\left(  y_{2}\right)  .$

\textbf{Type (B)}$.$ In this case we are on the straight portion of $L,$ and
the minimum must occur (as discussed in the earlier section) when
$\partial_{y}\left\{  L\left(  \frac{x_{0}-y}{t}\right)  +g\left(  y\right)
\right\}  =0.$ This means that any minimum $\hat{y}$ must satisfy $L^{\prime
}\left(  \frac{x_{0}-\hat{y}}{t}\right)  =g^{\prime}\left(  \hat{y}\right)  $.
If there are two minima at different segments, the one corresponding to the
lowest value of $g^{\prime}$ will be relevant. To see this, recall from Figure
\ref{PWF} that we start with break points \ $c_{1}<...<c_{N+1}$ and
$c_{1}<0,\ c_{N+1}>0.$ These become the slopes for $L$ and, when we define
$f\left(  y;x,t\right)  :=tL\left(  \frac{x-y}{t}\right)  $ the slopes are
$-c_{N+1}<0$ on the left up to the last one, $-c_{1}>0$ on the right. Any
minimizer, $\hat{y},$ of
\[
f\left(  y;x,t\right)  +g\left(  y\right)  =tL\left(  \frac{x-y}{t}\right)
+g\left(  y\right)
\]
on the flat part of $f$ must satisfy $f^{\prime}\left(  \hat{y}\right)
+g^{\prime}\left(  \hat{y}\right)  =0.$ On the last segment, for example we
have the requirement%
\[
g^{\prime}\left(  \hat{y}\right)  =-\left(  -c_{1}\right)  =c_{1}<0.
\]
For any of the previous segments we obtain $g^{\prime}\left(  y\right)
=c_{j}>c_{1}$ . Hence if there are minimizers on previous segments, they
become irrelevant as soon as we increase $x$.
\begin{figure}
[ptb]
\begin{center}
\includegraphics[width=\linewidth]{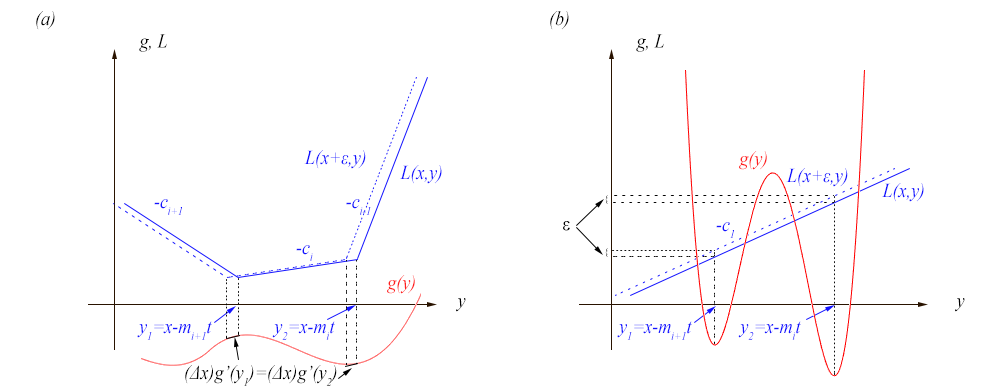}
\caption{(a) For multiple minimizers of\ Type (A), if $g^{\prime}$ evaluated
at different minimzers has a different value, as soon as $x$ is varied
slightly the value of $L\left(  \frac{x-y}{t}\right)  +g\left(  y\right)  $
changes proportionally to the slope $c_{i}$, so all but one of the minimizers
cease to be a minima as they change by different amounts. Therefore this case
occurs for a set of $x$ having measure zero. If $g^{\prime}$ is equal at two
or more minimizers (pictured), then the change of the value is uniform to
first order and we may consider the minimizer with largest argument without
loss of generality; (b) By a similar argument for minimizers of Type (B) we
need only consider those on the last segment, with slope $-c_{1}$. For all
minimizers $y^{\ast}$ of this form, $g^{\prime}\left(  y^{\ast}\left(
x,t\right)  \right)  $ will be identical, so choose the greatest without loss
of generality.}
\label{MinimizersFig}
\end{center}
\end{figure}

The slope of the straight line, $f,$ will be positive, i.e., $-c_{1}$ and the
minimum in the illustration above will be just to the left of the minimum of
$g$.

Hence, in the case of a minimum of Type $\left(  B\right)  $ we see that it is
only the minimizers on this rightmost segment that are relevant (except on a
set of measure zero). Although there may be infinitely many minimizers on this
segment, they all yield the same $g^{\prime}\left(  \hat{y}_{j}\right)  $
value of $c_{k}$ (where $k$ is the minimum index for which the slope $c_{k}$
corresponds to a minimizer), so we can take the largest of them, $y^{\ast}$
and write $w\left(  x,t\right)  =g^{\prime}\left(  y^{\ast}\left(  x,t\right)
\right)  =c_{k}$. Thus one has $w\left(  x,t\right)  =c_{k}$. This is
illustrated in Figure \ref{MinimizersFig}(b).

If we have a combination of minimizers of the two types, then the situation is
similar. Except on a set of measure zero in $x$, we need only consider those
values for which $g^{\prime}\left(  \cdot\right)  $ is minimum, the other
values cease to become a minimum as $x$ is varied. As discussed above, we only
need to consider finitely many of these minimizers.

Estimating $\left\vert w^{e}-w\right\vert $ is simpler in the Type $\left(
B\right)  $ case since we only have finitely many values for $g^{\prime}.$
When we take the smoothed version, $\mathcal{H}^{\varepsilon}$ yielding the
smoothed $\mathcal{L}^{\varepsilon}$ each of these points $\hat{y}_{j}\left(
x,t\right)  $ is approximated by $\hat{y}_{j}^{\varepsilon}\left(  x,t\right)
$ that will correspond to the same $g^{\prime}.$ Note that on the flat part
(non-vertex) of $L,$ the smoothing in the way that we are doing it does not
change the slope. In fact, $L$ and $L^{\varepsilon}$ will be identical except
on an interval of order $\varepsilon$ about the vertices.

Once have isolated the minimizer, $y^{\ast},$ we have that the $y_{\varepsilon
}^{\ast}$ is within $\varepsilon$ of $y^{\ast}$. Previous results then yield
the convergence of $w^{\varepsilon}\left(  x,t\right)  $ to $w\left(
x,t\right)  .$We will also need the following technical lemma.

\begin{lemma}
\label{LemMinConv}For the conservation law with the smoothed flux function
$\mathcal{H}_{\varepsilon,\delta},$ defined $A_{\delta,\varepsilon}$ as the
set of minimizers $y\left(  x,t\right)  $ in the Hopf-Lax formula. Similarly,
define $A$ as the set of minimizers for the sharp problem with the piecewise
linear flux function $H$. Then we have%
\begin{equation}
\lim_{\varepsilon\downarrow0}\sup A_{\varepsilon,\delta}=\sup A\text{ a.e.}%
\end{equation}

\end{lemma}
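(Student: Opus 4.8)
The plan is to fix $t>0$ and restrict attention to $x$ in a bounded interval, so that the minimizers of both problems lie in a fixed compact set: for the sharp problem in $[\,x-m_{N+1}t,\;x-m_1t\,]$ (off which $L=\infty$), and for the smoothed problem in a slightly larger fixed compact set, since $\mathcal{L}_{\varepsilon,\delta}$ — taking $\delta=\varepsilon^{2}$ as in Lemma~\ref{Lem Legendre}, so that a single parameter governs everything — is enormous outside $[m_1,m_{N+1}]$. Writing $Q(y):=tL\!\left(\tfrac{x-y}{t}\right)+g(y)$ and $Q_{\varepsilon}(y):=t\mathcal{L}_{\varepsilon,\varepsilon^{2}}\!\left(\tfrac{x-y}{t}\right)+g(y)$, Lemma~\ref{Lem Legendre} gives $\sup_{y}\lvert Q_{\varepsilon}(y)-Q(y)\rvert\le C\varepsilon$ on that compact, hence $\lvert\min Q_{\varepsilon}-\min Q\rvert\le C\varepsilon$. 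The first, easy half is the inequality $\limsup_{\varepsilon\downarrow0}\sup A_{\varepsilon,\delta}\le\sup A$: the point $y_{\varepsilon}:=\sup A_{\varepsilon,\delta}$ is itself a minimizer of $Q_{\varepsilon}$ by the compactness/continuity argument of Theorem~\ref{Thm 4.1}, and along any subsequence $y_{\varepsilon_j}\to\bar y$ one gets $Q(\bar y)=\lim_j Q_{\varepsilon_j}(y_{\varepsilon_j})=\lim_j\min Q_{\varepsilon_j}=\min Q$ by continuity of $Q$ and the $C\varepsilon$ bound, so $\bar y\in A$ and $\bar y\le\sup A$; hence every subsequential limit, and so the $\limsup$, is $\le\sup A$.

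For the reverse inequality $\liminf_{\varepsilon\downarrow0}\sup A_{\varepsilon,\delta}\ge\sup A$ a.e.\ I would lean on the structural analysis of Section~4: for fixed $(x,t)$ there are only finitely many minimizer classes of $Q$ (at most one per vertex of $L$, Type~(A), and at most one per linear segment, Type~(B)), $g'$ is constant on each, and by the Type~(A)/(B) discussion the greatest minimizer $y^{\ast}=\sup A$ sits at the right end of the rightmost \emph{active} class, the one realizing $\min_j g'(\hat y_j)$. If this active class is unique, the minimizers of $Q_{\varepsilon}$ cluster only near it: for a Type~(A) class this is exactly Lemma~\ref{Lem Smoothing}, which places a minimizer of $Q_{\varepsilon}$ within $C_2\varepsilon$ of the sharp vertex minimizer; for a Type~(B) class it follows because the mollification leaves $L$ unchanged outside $O(\varepsilon)$-neighbourhoods of the vertices. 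In either case the \emph{greatest} minimizer of $Q_{\varepsilon}$ converges to $y^{\ast}$, and via continuity of $g'$ one recovers $w^{\varepsilon}(x,t)\to w(x,t)$ as a by-product.

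The main obstacle is the degenerate case in which two distinct classes are simultaneously active — two minimizers $\hat y_1<\hat y_2=y^{\ast}$ of $Q$ with $g'(\hat y_1)=g'(\hat y_2)$ — or in which the rightmost active class is a genuine plateau (an interval of Type~(B) minimizers). Then $Q(\hat y_1)=Q(\hat y_2)=\min Q$ but the mollified values $Q_{\varepsilon}(\hat y_1^{\varepsilon})$ and $Q_{\varepsilon}(\hat y_2^{\varepsilon})$ may each exceed the other by $O(\varepsilon)$, so $\sup A_{\varepsilon,\delta}$ could oscillate between neighbourhoods of the two classes (or land in the interior of a plateau rather than at its right end) and fail to converge. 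The plan is to exclude this: first show that, for fixed $t$, the set of $x$ at which two distinct classes are simultaneously active has Lebesgue measure zero — using monotonicity of $x\mapsto y^{\ast}(x,t)$ (Theorem~\ref{Thm 4.2}) together with the finitely-many-labels count, since such coincidences would otherwise force $y^{\ast}$ to be multivalued on a set of positive measure; then, for the remaining plateau case, use properties (a)--(b) of the mollification $\mathcal{H}_{\varepsilon}$ near the vertex adjacent to the plateau on the right to pin the mollified minimizer to $y^{\ast}=x-m_kt$. Combining this with the unique-active-class analysis and the $\limsup$ bound of the first paragraph yields the equality a.e. I expect the plateau step — controlling exactly where the mollified minimizer lands inside a degenerate interval of sharp minimizers, which needs the fine structure of the smoothing rather than just the $C\varepsilon$ estimate — to be the genuinely delicate point.
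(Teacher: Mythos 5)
A preliminary remark on the comparison itself: the paper states Lemma \ref{LemMinConv} without giving a proof. The nearest supporting material is the informal Type (A)/Type (B) discussion in Section 5.2 and the proof of Theorem \ref{ThmMinConv}, and that proof establishes only the weaker conclusion $g^{\prime}\left(y_{\varepsilon}^{\ast}(x,t)\right)\rightarrow g^{\prime}\left(y^{\ast}(x,t)\right)$, explicitly conceding that ``one may have $y_{1}^{\varepsilon}\not\rightarrow y_{2}$.'' Your plan is therefore more ambitious than anything written in the paper. Your first half ($\limsup_{\varepsilon\downarrow0}\sup A_{\varepsilon,\delta}\leq\sup A$ via compactness, the $C\varepsilon$ bound of Lemma \ref{Lem Legendre}, and closedness of the sharp minimizer set) is sound, modulo the routine check that minimizers of $Q_{\varepsilon}$ eventually enter the interval where $L$ is finite, and your unique-active-class case matches the paper's Section 5.2 reasoning.

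The gap is in the measure-zero claim for the degenerate case, and it sits exactly where the lemma is hardest. The monotonicity-plus-jump argument (a monotone function has countably many discontinuities) disposes only of the configuration in which the two active classes carry \emph{different} values of $g^{\prime}$: there the global minimum genuinely crosses from one class to the other as $x$ passes $x_{0}$, so $y^{\ast}(\cdot,t)$ jumps at $x_{0}$ and such $x_{0}$ are countable. But the sub-case you single out, $g^{\prime}(\hat{y}_{1})=g^{\prime}(\hat{y}_{2})$ with $\hat{y}_{1}<\hat{y}_{2}$ in distinct classes, is not a crossing: the two local minimum values descend at the common rate $g^{\prime}(\hat{y}_{i})$ as $x$ increases, so both classes remain simultaneously active on a whole interval of $x$ (take two vertices $v_{j}<v_{k}$ of $L$ and $C^{1}$ data with $g(x-v_{j}t)-g(x-v_{k}t)$ constant there and equal to $t\left(L(v_{k})-L(v_{j})\right)$), $y^{\ast}$ is monotone without jumping, and ``multivaluedness of $y^{\ast}$'' is no contradiction since $y^{\ast}$ is defined as a supremum. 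On such a set of positive measure nothing in the $C\varepsilon$ estimate forces $\sup A_{\varepsilon,\delta}$ to select the right-hand class --- the mollified functional may dip lower near $\hat{y}_{1}$ for a sequence of $\varepsilon$ --- which is precisely why Theorem \ref{ThmMinConv} retreats to convergence of the $g^{\prime}$ values rather than of the minimizers. To close the reverse inequality you would need either an argument that this configuration is a.e.\ impossible (it is not, for general $C^{1}$ initial data) or a fine-structure argument that the specific mollification always favors the rightmost class; your plateau step faces the same difficulty. As written, the inequality $\liminf_{\varepsilon\downarrow0}\sup A_{\varepsilon,\delta}\geq\sup A$ is not established --- though in fairness, neither the paper nor any argument I can reconstruct from it establishes it either.
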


\begin{theorem}
\label{ThmMinConv}For given $t>0$ and for a.e. $x$, the largest minimizer
$y^{\ast}\left(  x,t\right)  $ of the sharp problem satisfies the identity%
\begin{equation}
w\left(  x,t\right)  =g^{\prime}\left(  y^{\ast}\left(  x,t\right)  \right)
\end{equation}
by Theorem \ref{Thm 4.3}. In particular, this implies that%
\begin{equation}
\lim_{\varepsilon\rightarrow0}w^{\varepsilon}\left(  x,t\right)  =w\left(
x,t\right)  \label{ThmMinConvEq}%
\end{equation}
pointwise a.e.

\begin{proof}
[Proof of Theorem \ref{ThmMinConv}]Should the set $A_{\varepsilon,\delta}$
consist of a single element, the proof is trivial. Therefore, assume that the
minimizer in $A_{\varepsilon,\delta}$ is not unique. We may consider without
loss of generality the case of two such minimizers, as the arguments presented
here are easily generalized to $n$ such minimizers.

We consider two such minimizers at a point $x_{0}$ for the sharp problem and
denote them by $y_{i}\left(  x_{0}\right)  $ ($i=1,2$). First take the case
where they are both Type (A) minimizers. We take the partial derivative with
respect to $x$ of the Legendre transform at the minimum point, which is
well-defined as $y_{1}$ also depends on $x$ and the minimizer moves as one
shifts $x$. Therefore, one has%
\begin{equation}
g^{\prime}\left(  y_{1}\left(  x_{0}\right)  \right)  =\partial_{x}\left\{
tL\left(  \frac{x-y_{1}\left(  x\right)  }{t}\right)  +g\left(  y_{1}\left(
x\right)  \right)  \right\}  _{x=x_{0}}%
\end{equation}
and similarly for $g^{\prime}\left(  y_{2}\left(  x_{0}\right)  \right)  .$ If
$g^{\prime}\left(  y_{1}\left(  x_{0}\right)  \right)  \not =g^{\prime}\left(
y_{2}\left(  x_{0}\right)  \right)  $, the minimizer with the smaller value
when evaluated under $g^{\prime}$ will become irrelevant as $x_{0}$ changes.
Therefore, one sees that this case is confined to sets of measure zero and can
be ignored.

Consequently, assume
\begin{equation}
g^{\prime}\left(  y_{1}\left(  x_{0}\right)  \right)  =g^{\prime}\left(
y_{2}\left(  x_{0}\right)  \right)  . \label{SmoothMin}%
\end{equation}
We now want to examine $y_{1}^{\varepsilon}$ and $y_{2}^{\varepsilon}$, the
minimizers for the smoothed out version. We have suppressed the parameter
$\delta$ (by setting $\delta=\varepsilon^{2}$) and the time $t$ for notational
convenience. We then compute%
\begin{equation}
\partial_{x}\left\{  t\mathcal{L}\left(  \frac{x-y_{i}^{\varepsilon}\left(
x_{0}\right)  }{t}+g\left(  y_{i}^{\varepsilon}\left(  x_{0}\right)  \right)
\right)  \right\}  =g^{\prime}\left(  y_{i}^{\varepsilon}\left(  x_{0}\right)
\right)  .
\end{equation}
It is possible that one might have $g^{\prime}\left(  y_{1}^{\varepsilon
}\left(  x_{0}\right)  \right)  <g^{\prime}\left(  y_{2}^{\varepsilon}\left(
x_{0}\right)  \right)  $ (or the reverse inequality) despite having
(\ref{SmoothMin}). However, this would imply that for the $\varepsilon$ case,
$y_{2}^{\varepsilon}$becomes irrelevant due to $g\left(  y_{2}^{\varepsilon
}\right)  $ increasing faster as $x$ is changed, and in this case
$y_{1}^{\varepsilon}$ would be left as the largest minimizer. However,
$g^{\prime}$ is continuous, so that%
\begin{align}
g^{\prime}\left(  y_{1}^{\varepsilon}\left(  x_{0}\right)  \right)   &
\rightarrow g^{\prime}\left(  y_{1}\left(  x_{0}\right)  \right) \nonumber\\
g^{\prime}\left(  y_{2}^{\varepsilon}\left(  x_{0}\right)  \right)   &
\rightarrow g^{\prime}\left(  y_{2}\left(  x_{0}\right)  \right)  .
\label{Sec5MinConv}%
\end{align}
Hence, (\ref{Sec5MinConv}) implies $g^{\prime}\left(  y_{1}^{\varepsilon
}\left(  x_{0}\right)  \right)  \rightarrow g^{\prime}\left(  y_{2}\left(
x\right)  \right)  $. Note that this result holds even though one may have
$y_{1}^{\varepsilon}\not \rightarrow y_{2}$.

Thus, if there were more than two Type (A) minimizers, then we would have%
\begin{equation}
w\left(  x,t\right)  =g^{\prime}\left(  y^{\ast}\left(  x,t\right)  \right)
=\lim_{\varepsilon\rightarrow0}w^{\varepsilon}\left(  x,t\right)
\end{equation}

Next, consider the situation with more than one Type (B) minimizer. Should
these minimizers occur among points where $L^{\prime}\left(  \cdot\right)  $
takes different values, the one with the smaller value evaluated at
$L^{\prime}$ becomes irrelevant by the same token as for multiple Type
(A)\ minimizers. Therefore, assume without loss of generality that these
minimizers both occur along the last segment, i.e., where $f$ has slope
$-c_{1}$%
\begin{equation}
g^{\prime}\left(  \hat{y}_{i}\left(  x_{0}\right)  \right)  =c_{1}.
\end{equation}

Then we have%
\begin{equation}
\partial_{x}\min\left\{  {}\right\}  =\partial_{x}\left\{  tL\left(
\frac{x-y_{i}\left(  x\right)  }{t}\right)  +g\left(  y_{i}\left(  x\right)
\right)  \right\}  =-c_{1}%
\end{equation}
so that all derivatives will be identical.

Next, for each $y_{i}$we have $y_{i}^{\varepsilon}$ such that $\left\vert
y_{i}^{\varepsilon}-y_{i}\right\vert <C\varepsilon$, so one may write%
\begin{equation}
\partial_{x}\min_{y\in\mathbb{R}}\left\{  tL\left(  \frac{x-y}{t}+g\left(
y\right)  \right)  \right\}  =-c_{N}=\lim_{\varepsilon\rightarrow0}g^{\prime
}\left(  y_{\varepsilon}^{\ast}\left(  x,t\right)  \right)
\end{equation}
where%
\begin{equation}
y_{\varepsilon}^{\ast}:=\arg^{+}\min_{y\in\mathbb{R}}\left\{  t\mathcal{L}%
\left(  \frac{x-y}{t}\right)  +g\left(  y\right)  \right\}  .
\end{equation}
This leaves the one remaining case where there is one minimizer of each type,
i.e. one Type (A) minimizer and one Type (B). As in the above cases, one can
then assume that $g^{\prime}$ has the same value at both of these minimizers,
for if not, the minimizer with the greater value of $g^{\prime}$ would cease
to be relevant, so that the set of such $x$ for fixed $t$ where this occurs is
of measure zero. In a similar fashion to the above cases, one has
(\ref{ThmMinConvEq}). Together with Lemma \ref{LemMinConv}, this completes the proof.
\end{proof}
\end{theorem}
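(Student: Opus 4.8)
The plan is to reduce the claimed convergence to a statement about the greatest minimizers and then to exploit the finite combinatorial structure that polygonality of $H$ forces. By Theorem \ref{Thm 4.3} the identity $w(x,t)=g^{\prime}(y^{\ast}(x,t))$ already holds for the sharp problem, where $y^{\ast}(x,t)=\arg^{+}\min_{y}\{tL((x-y)/t)+g(y)\}$. For the smoothed, superlinearized flux $\mathcal{H}_{\varepsilon,\delta}$ with $\delta=\varepsilon^{2}$, the function $\mathcal{H}_{\varepsilon,\delta}$ is $C^{2}$, uniformly convex (hence superlinear) and $g\in C^{1}$, so Theorem \ref{Thm 2.5} applies and gives $w^{\varepsilon}(x,t)=g^{\prime}(y_{\varepsilon}^{\ast}(x,t))$ with $y_{\varepsilon}^{\ast}(x,t)=\arg^{+}\min_{y}\{t\mathcal{L}_{\varepsilon,\delta}((x-y)/t)+g(y)\}$. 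It therefore suffices to show that, for each fixed $t>0$ and a.e. $x$, $g^{\prime}(y_{\varepsilon}^{\ast}(x,t))\to g^{\prime}(y^{\ast}(x,t))$ as $\varepsilon\downarrow0$; by continuity of $g^{\prime}$ this is a question about where $y_{\varepsilon}^{\ast}$ sits relative to the sharp minimizers.

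Next I would record the finite structure of the sharp minimizers. Since $H$ has finitely many break points, $L$ has finitely many vertices and finitely many linear segments, so any minimizer of $tL((x-y)/t)+g(y)$ is of Type (A), at one of the finitely many vertices of $L$, or of Type (B), on one of the finitely many segments; on a segment the first-order condition forces $g^{\prime}(\hat y)=L^{\prime}((x-\hat y)/t)$, a fixed slope of $L$. Consequently $\{g^{\prime}(\hat y):\hat y\text{ a minimizer at }(x,t)\}$ is a finite set, and after replacing each group of minimizers on a common segment by its supremum there are only finitely many representative minimizers to track. In the case of a single minimizer the conclusion is immediate: Lemma \ref{Lem Legendre} gives $|\mathcal{L}_{\varepsilon,\delta}(p)-L(p)|\le C\varepsilon$ on compact sets, so the minimizer-stability estimate of Lemma \ref{Lem Smoothing} yields $|y_{\varepsilon}^{\ast}-y^{\ast}|\le C\varepsilon$, and continuity of $g^{\prime}$ closes the argument; if the sharp minimizer is not within $C\varepsilon$ of a vertex the mollification does not even reach it, so the convergence is trivial there.

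The substantive case, which I expect to be the main obstacle, is when the sharp problem has several minimizers at $(x,t)$ — with the sub-cases of two Type (A) minimizers, two Type (B) minimizers, or one of each. The key claim is that for a.e. $x$ (with $t$ fixed) all minimizers at $(x,t)$ carry the \emph{same} value of $g^{\prime}$: the map $x\mapsto\min_{y}\{tL((x-y)/t)+g(y)\}$ is Lipschitz, hence differentiable for a.e. $x$, and at such an $x$ the one-sided difference quotients force all minimizers $\hat y_{j}$ to share a common rate of descent, which by the analysis in Lemmas \ref{Lem4.2}--\ref{Lem 4.3} equals $g^{\prime}(\hat y_{j})$; so for a.e. $x$ every minimizer has the common value $c_{k}=g^{\prime}(y^{\ast}(x,t))=w(x,t)$, and the configurations with two distinct $g^{\prime}$-values sit in a set of measure zero. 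On this full-measure set, each of the finitely many representative sharp minimizers $\hat y_{j}$ is shadowed by a smoothed minimizer $\hat y_{j}^{\varepsilon}$ with $|\hat y_{j}^{\varepsilon}-\hat y_{j}|\le C\varepsilon$ (Lemma \ref{Lem Smoothing}), so $g^{\prime}(\hat y_{j}^{\varepsilon})\to g^{\prime}(\hat y_{j})=c_{k}$, while conversely every limit point of $y_{\varepsilon}^{\ast}$ is a sharp minimizer at $(x,t)$; hence $g^{\prime}(y_{\varepsilon}^{\ast}(x,t))\to c_{k}=w(x,t)$. The subtlety worth flagging is that this argument does \emph{not} require $y_{\varepsilon}^{\ast}\to y^{\ast}$ pointwise — the limit of the smoothed greatest minimizer may land on a different sharp minimizer — yet since all sharp minimizers at a generic $x$ share the value $c_{k}$ the conclusion $w^{\varepsilon}\to w$ still follows. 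Combining this with Lemma \ref{LemMinConv}, which records $\lim_{\varepsilon\downarrow0}\sup A_{\varepsilon,\delta}=\sup A$ a.e., completes the proof.
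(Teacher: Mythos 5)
Your proposal is correct and follows the same overall strategy as the paper's proof: express both $w$ and $w^{\varepsilon}$ as $g^{\prime}$ evaluated at the respective greatest minimizers, reduce to finitely many representative minimizers using the polygonal structure of $L$, discard on a measure-zero set of $x$ the configurations where minimizers carry distinct values of $g^{\prime}$, and then conclude via the $C\varepsilon$-stability of minimizers (Lemmas \ref{Lem Smoothing} and \ref{Lem Legendre}) together with continuity of $g^{\prime}$ and Lemma \ref{LemMinConv}. You even flag the same subtlety the paper does, namely that $y_{\varepsilon}^{\ast}\to y^{\ast}$ is not required because all sharp minimizers at a generic $x$ share the value $c_{k}$. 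Where you genuinely diverge is in the organization and in the justification of the measure-zero step: the paper runs three separate cases (two Type (A), two Type (B), one of each) and in each argues dynamically that the minimizer with the larger $g^{\prime}$ ``becomes irrelevant as $x_{0}$ changes,'' whereas you collapse all cases into one by observing that the value function $x\mapsto\min_{y}\{tL((x-y)/t)+g(y)\}$ is Lipschitz, hence differentiable a.e.\ by Rademacher, and that at a point of differentiability the one-sided difference quotients force every minimizer to have the same descent rate $g^{\prime}(\hat y_{j})$. This Danskin-type argument is cleaner and arguably more rigorous than the paper's case-by-case heuristic, at the cost of requiring you to verify the derivative formula $\partial_{x}F(x,\hat y)=g^{\prime}(\hat y)$ separately for vertex (Type (A)) minimizers, where $L$ itself is not differentiable --- which Lemma \ref{Lem 4.3} supplies. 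You also add an explicit statement that every limit point of $y_{\varepsilon}^{\ast}$ is a sharp minimizer, which the paper leaves implicit in Lemma \ref{LemMinConv}; note only that your ``shadowing'' claim (each sharp minimizer is within $C\varepsilon$ of a smoothed minimizer) is not actually needed for the conclusion and is the one direction that Lemma \ref{Lem Smoothing}, which concerns global minimizers, does not literally give.
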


\section{Uniqueness For Polygonal Flux}

In the preceding sections, we have shown that $w\left(  x,t\right)  =g'\left(
y^{\ast}\left(  x,t\right)  \right)  $ is a solution to the conservation law
(\ref{cl}). In this section we establish a criterion under which it
is the only solution by characterizing $w$ as the unique solution constructed from the limit of the
functions $w^{\varepsilon}$as $\varepsilon\downarrow0$, which are unique
provided $g^{\prime}$ is continuous. This approach is reminiscent of
the well-known vanishing viscosity limit for Burgers' equation.

\begin{definition}
\label{Def 6.1}Let $H\in C^{0}\left(  \mathbb{R}\right)  $ and suppose further
that $H$ is differentiable a.e. We say $w\left(  x,t\right)  $ is a
\textit{limiting mollified solution} to the initial value problem for the
conservation law for the flux function $H$ if

(i) There exist smooth $\mathcal{H}_{j}$ that converge uniformly on compact
sets to $H$.

(ii) The solutions $w_{j}$ for the conservation law with flux $\mathcal{H}%
_{j}$ converge, for each $t>0$, to $w$ a.e. in $x$.

(iii) For any other sequence $\mathcal{\tilde{H}}_{j}$ and solutions
$\tilde{w}_{j}$ satisfying (i) and (ii), we have%
\begin{equation}
\lim_{j\rightarrow\infty}\tilde{w}_{j}=\lim_{j\rightarrow\infty}w_{j}=w.
\end{equation}

\end{definition}

\begin{remark}
For the case of a polygonal flux $H$ with break points $\left\{
c_{i}\right\}  _{i=1}^{n}$, clearly $H\in C^{\infty}\left(  \mathbb{R}%
\backslash\left\{  c_{i}\right\}  _{i=1}^{n}\right)  $ and is continuous on
all of $\mathbb{R}$. Indeed, we can show rigorously that this case satisfies
Definition \ref{Def 6.1}.
\end{remark}

\begin{theorem}
\label{Thm 6.3}Let $g^{\prime}$ be continuous, $H$ a polygonal flux
function, and $w$ be the solution of the corresponding conservation law. Then%
\begin{equation}
w\left(  x,t\right)  =g^{\prime}\left(  y^{\ast}\left(  x,t\right)  \right)
\end{equation}
\bigskip is the unique limiting mollified solution satisfying $w\left(  x,0\right)
=g^{\prime}\left(  x\right)  $.
\end{theorem}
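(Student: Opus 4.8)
The plan is to prove the statement in two steps: first, to exhibit a mollifying sequence that realizes $w(x,t) = g'(y^{\ast}(x,t))$ as a limiting mollified solution, so that the set of such solutions is nonempty; and second, to show that \emph{every} admissible mollifying sequence produces this same limit, whence $g'(y^{\ast}(\cdot,\cdot))$ is the unique limiting mollified solution.

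For the first step I would use the smoothed, superlinearized fluxes $\mathcal{H}_{\varepsilon} := \mathcal{H}_{\varepsilon,\varepsilon^{2}}$ of Section 5. These are $C^{2}$, uniformly convex, superlinear, and satisfy $|\mathcal{H}_{\varepsilon}(q) - H(q)| \le C\varepsilon$ uniformly on compact sets, so they converge to $H$ uniformly on compact sets, giving property (i) of Definition \ref{Def 6.1}. By the classical theory (Theorem \ref{Thm 2.3}, or Theorem \ref{Thm 2.5} together with the entropy uniqueness of Theorem \ref{Thm 2.2}), the associated conservation law has the unique entropy solution $w^{\varepsilon}(x,t) = g'(y^{\ast}_{\varepsilon}(x,t))$, and Theorem \ref{ThmMinConv} (with Lemma \ref{LemMinConv}) gives $w^{\varepsilon}(x,t) \to w(x,t) = g'(y^{\ast}(x,t))$ for every $t>0$ and a.e.\ $x$, which is property (ii). The initial datum is recovered from Lemma \ref{Lem B}, which yields $u(\cdot,0) = g$ and hence $w(\cdot,0) = g'$; equivalently, $y^{\ast}(x,t) \to x$ as $t\downarrow 0$.

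The heart of the theorem is the uniqueness clause (iii). Let $\tilde{\mathcal{H}}_{j}$ be any sequence of smooth convex fluxes with $\tilde{\mathcal{H}}_{j} \to H$ uniformly on compact sets, and, as required by the definition, suppose the corresponding solutions $\tilde w_{j}$ converge a.e.\ to some $\tilde w$; by Theorem \ref{Thm 2.5}, $\tilde w_{j}(x,t) = g'(\tilde y^{\ast}_{j}(x,t))$ with $\tilde y^{\ast}_{j} = \arg^{+}\min_{y}\{ t\tilde{\mathcal{L}}_{j}(\tfrac{x-y}{t}) + g(y)\}$. Repeating the argument of Lemma \ref{Lem Legendre} verbatim, uniform convergence of $\tilde{\mathcal{H}}_{j}$ forces $\tilde{\mathcal{L}}_{j} \to L$ uniformly on compact subsets of the interior of $[m_{1},m_{N+1}]$ (arguments outside this interval produce arbitrarily large, non-competitive values, exactly as in Section 3, so the relevant minimizers stay in a fixed bounded set, using that $g$ is Lipschitz on bounded sets). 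Hence, for fixed $(x,t)$, the functionals $y \mapsto t\tilde{\mathcal{L}}_{j}(\tfrac{x-y}{t}) + g(y)$ converge uniformly on compact $y$-intervals to $Q(y;x,t) := tL(\tfrac{x-y}{t}) + g(y)$, and a routine compactness argument shows that every limit point of $\{\tilde y^{\ast}_{j}(x,t)\}$ is a minimizer of $Q(\cdot\,;x,t)$. Now I invoke the structural fact from the proof of Theorem \ref{Thm 4.3}: for each fixed $t$ and a.e.\ $x$, \emph{all} minimizers of $Q(\cdot\,;x,t)$ yield the same value of $g'$, namely $w(x,t)$ (the $x$ at which distinct minimizers give distinct $g'$-values form a countable, hence null, set). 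Since $g'$ is continuous, every limit point of $\tilde w_{j}(x,t) = g'(\tilde y^{\ast}_{j}(x,t))$ equals $w(x,t)$; discarding the countable union of the null sets involved (one per $j$ for Theorem \ref{Thm 2.5}, plus the exceptional sets of Theorems \ref{Thm 4.3} and \ref{ThmMinConv}), we conclude $\tilde w(x,t) = w(x,t)$ for a.e.\ $x$, for each $t>0$. This shows that $w = g'(y^{\ast})$ satisfies (iii) and is the unique limiting mollified solution.

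I expect the main obstacle to be precisely this uniqueness step, and within it the subtle point is not the convergence of the minimizers themselves --- which generally fails, since different smooth approximations select different points out of a continuum of sharp-problem minimizers --- but the fact that the \emph{value} $g'$ of the selected minimizer nonetheless converges. That is exactly why the identity $w = g'(y^{\ast})$ of Theorem \ref{Thm 4.3}, combined with the observation that this $g'$-value is constant across all minimizers off a null set of $x$, is the indispensable ingredient; without it one could not exclude two mollifying families converging to genuinely different weak solutions. A secondary, purely technical difficulty --- the behaviour of $\tilde{\mathcal{L}}_{j}$ near the endpoints $m_{1}, m_{N+1}$, where convergence may fail --- is handled exactly as in Lemmas \ref{Lem 1} and \ref{Lem Legendre}, by noting that such arguments only produce large, non-minimal values and so are irrelevant.
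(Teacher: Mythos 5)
Your proposal is correct and follows essentially the same route as the paper: mollify via $\mathcal{H}_{\varepsilon,\varepsilon^{2}}$, invoke the Section 5 convergence $w^{\varepsilon}\rightarrow w=g^{\prime}\left(  y^{\ast}\right)  $ for existence, and use the fact that all minimizers of the sharp functional yield the same value of $g^{\prime}$ off a null set of $x$ to conclude that any admissible mollifying sequence produces the same limit. In fact the paper's own proof is a one-sentence deferral to Section 5 (which only treats the specific mollification satisfying conditions (a) and (b)), so your explicit verification of clause (iii) for an \emph{arbitrary} uniformly convergent sequence $\tilde{\mathcal{H}}_{j}$ --- via locally uniform convergence of the Legendre transforms and the constancy of $g^{\prime}$ across the minimizer set --- is more complete than what the paper records.
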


\begin{proof}
[Proof of Theorem \ref{Thm 6.3}]Since $w_{j}$ are weak solutions and
$w_{j}\rightarrow w$ for any sequence $w_{j}$ (as shown in Section 5), the
result follows.
\end{proof}

\section{A Discretized Conservation Law: Polygonal Flux with Matching
Piecewise Constant Initial Conditions}

In earlier sections, when considering the piecewise linear flux function $H$,
we chose initial conditions that were smooth. An important version of this
problem deals with initial conditions $g^{\prime}$ that are not smooth, but
instead piecewise constant, as is treated in \cite{HO}. The values of these
constants are taken as a subset of the break points $\left\{  c_{i}\right\}
_{i=1}^{N}$ of $H$. Consequently, as one can see from Figure \ref{PWF}, the
values of the initial condition match the slopes of the Legendre transform $L$
of the flux function. Furthermore, direct computation verifies that the range
of the solution $w\left(  x,t\right)  $ will also have range $\left\{
c_{i}\right\}  _{i=1}^{N}$.

The analysis of the minimizers is similar to those of the previous sections,
except from the fact that we have an additional type of minimizer, Type (C) in
which the vertices of $f\left(  y;x,t\right)  :=L\left(  \frac{x-y}{t}\right)
$ and $g^{\prime}$ coincide For such a minimum, the derivative $g^{\prime
}\left(  \hat{y}\left(  x,t\right)  \right)  $ does not exist as in general
the limits from the left and right do not agree. However, there are only
finitely many vertices of $L$, and hence there are at most a finite number of
Type (C) vertices for a fixed $t$.

For Type (A)\ and (B) minimizers, we proceed in the same way, including the
smoothing. Although the Type (B) minimum now occurs at a vertex of $g$, the
analysis of the $x$-derivative yields the same result. Note that in this case
one also needs to mollify $g^{\prime}$, yielding the following results.

\begin{theorem}
\label{Thm 7.1}Let $L$ be polygonal convex and $g^{\prime}$ be piecewise
constant. Then
\begin{align}
w\left(  x,t\right)    & =g^{\prime}\left(  y^{\ast}\left(  x,t\right)
\right)  \text{ when the minimizer }y^{\ast}\text{ is at a vertex of
}L\nonumber\\
& =L^{\prime}\left(  y^{\ast})x,t\right)  \text{ when the minimizer }y^{\ast
}\text{ is at a vertex of }g \label{Thm7.1cases}
\end{align}
a.e. in $x$ (for fixed $t>0$) is a solution to (\ref{cl}). Note that for a
fixed $t>0$ and $x$ a.e., one of the cases in (\ref{Thm7.1cases}) occurs.
Furthermore%
\begin{equation}
w^{\varepsilon}\left(  x,t\right)  =g_{\varepsilon}^{\prime}\left(
y_{\varepsilon}^{\ast}\left(  x,t\right)  \right)  \rightarrow w\left(
x,t\right)  \text{ a.e. in }x
\end{equation}

\end{theorem}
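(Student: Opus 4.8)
The plan is to reproduce, for piecewise constant $g^{\prime}$, the three-part strategy already used for smooth initial data: identify $w:=\partial_x u$ as an integral solution, compute $\partial_x\min_y\{tL((x-y)/t)+g(y)\}$ by classifying the minimizers, and recover the formula as an $\varepsilon\downarrow0$ limit of a doubly-mollified problem. First I would record the basic structure. Since $g^{\prime}$ is piecewise constant and bounded, $g$ is Lipschitz and piecewise linear, so by Theorem \ref{Thm 2} the function $w:=\partial_x u$, with $u$ the Hopf-Lax function, is an integral solution of (\ref{cl}) with $w(x,0)=g^{\prime}(x)$. For fixed $(x,t)$ the map $y\mapsto v(y):=tL((x-y)/t)+g(y)$ is continuous and piecewise linear on the compact interval $[x-m_{N+1}t,\,x-m_1 t]$ on which $L$ is finite (and $v\equiv+\infty$ outside), so Theorem \ref{Thm 4.1} applies and the greatest minimizer $y^{\ast}(x,t)$ is well defined, the minimizing set being a finite union of closed intervals and points. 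I would then classify a minimizer $\hat y$ as Type (A) if it sits at a vertex of $y\mapsto L((x-y)/t)$ with $g$ locally affine there, Type (B) if it lies on a flat segment of $L$ where $g$ has a vertex, and Type (C) if a vertex of $L$ coincides with a vertex of $g$; the polygonal structure forces at most finitely many Type (C) minimizers for each $(x,t)$, and only finitely many values overall of the relevant derivative ($g^{\prime}$ at a vertex of $L$, or $L^{\prime}((x-\hat y)/t)$ on a segment).

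Next I would compute $w=\partial_x\min_y v(y)$ branch by branch, exactly as in Lemmas \ref{Lem4.2}--\ref{Lem 4.3} and the Type (A)/(B) discussion preceding Theorem \ref{ThmMinConv}. For a single Type (A) minimizer the vertical height of the active vertex of $L$ is constant in $x$, so the minimum slides along the graph of $g$ and $w=g^{\prime}(\hat y)$; for a single Type (B) minimizer we are on a flat segment, $f(y;x,t)=\hat f(y-x)$ with $\hat f$ affine, and the shift argument of Example \ref{ExIntervalSmooth} / Lemma \ref{Lem4.2} gives $w=-\partial_y f(\hat y;x,t)=L^{\prime}((x-\hat y)/t)$. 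When several branches are present I would compare their $x$-derivatives: a branch with a strictly larger value of the relevant derivative ceases to be a minimizer once $x$ increases, hence is active only on a set of $x$ of measure zero; on the set where two active branches share the same derivative value that common value is $w$ and one may select the largest argument. A branch sitting on an interval of constancy of $y^{\ast}$ (in particular a Type (C) branch persisting on a positive-measure set of $x$) contributes $w=\partial_x\, tL((x-y^{\ast})/t)=L^{\prime}((x-y^{\ast})/t)$, which along that interval equals the segment slope $c_j$ for a.e.\ $x$, since $(x-y^{\ast})/t$ meets the finitely many vertices $m_i$ only at isolated points. Collecting the cases, and using that $y^{\ast}$ is nondecreasing in $x$ by Theorem \ref{Thm 4.2}, yields the dichotomy (\ref{Thm7.1cases}): for a.e.\ $x$ either $y^{\ast}$ is of Type (A), giving $w=g^{\prime}(y^{\ast})$, or it falls at a vertex of $g$, giving $w=L^{\prime}((x-y^{\ast})/t)$.

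Finally I would establish the mollified convergence. As in Section 5 I replace $H$ by the smooth, uniformly convex $\mathcal{H}_{\varepsilon,\delta}$ with $\delta=\varepsilon^2$ (Lemmas \ref{Lem Smoothing}--\ref{Lem Legendre}), and additionally mollify $g^{\prime}$ to a continuous $g^{\prime}_{\varepsilon}$ with $g^{\prime}_{\varepsilon}=g^{\prime}$ off $\varepsilon$-neighborhoods of its jumps and $g_{\varepsilon}\to g$ uniformly on compacta; then $g_{\varepsilon}\in C^1$, the classical theory applies, and $w^{\varepsilon}(x,t)=g^{\prime}_{\varepsilon}(y^{\ast}_{\varepsilon}(x,t))$. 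Lemma \ref{LemMinConv} and the argument of Theorem \ref{ThmMinConv}, applied verbatim to the doubly-mollified data, give $\sup A_{\varepsilon,\delta}\to\sup A$ a.e., i.e.\ $y^{\ast}_{\varepsilon}\to y^{\ast}$ a.e.\ in $x$; together with continuity of $g^{\prime}_{\varepsilon}$, and the fact that for a.e.\ $x$ the point $y^{\ast}(x,t)$ is not a jump point of $g^{\prime}$ — while on an interval of constancy where it is (the Type (C) case) one instead has $w=L^{\prime}((x-y^{\ast})/t)=\lim_{\varepsilon}L^{\prime}_{\varepsilon}((x-y^{\ast}_{\varepsilon})/t)$ — this yields $w^{\varepsilon}\to w$ a.e. Combined with Theorem \ref{Thm 2} for the solution property, the theorem follows.

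I expect the main obstacle to be the bookkeeping around Type (C) minimizers: since $y^{\ast}(\cdot,t)$ is only nondecreasing it may be constant on an interval and so equal a jump point of $g^{\prime}$ on a set of positive measure, and one must argue carefully that on any such interval the $x$-derivative sees only the affine pieces of $L((x-y^{\ast})/t)$, so that $w$ equals the relevant segment slope for a.e.\ $x$ there and the smoothed solutions still converge to it. The Type (A)/(B) cases and the $\varepsilon$-convergence are routine adaptations of Theorems \ref{Thm 4.3} and \ref{ThmMinConv}.
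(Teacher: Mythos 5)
Your proposal is correct and follows essentially the same route as the paper: reduce to the earlier Type (A)/(B) classification plus the new Type (C) coincidence of vertices, reuse the Section 4 derivative computation and the Section 5 mollification (now also mollifying $g'$), and dispose of the vertex-coincidence subtlety. In fact your write-up is considerably more detailed than the paper's own two-sentence proof, and your careful treatment of the case where $y^{\ast}(\cdot,t)$ sits at a vertex of $g$ on a set of positive measure (where $w=L'((x-y^{\ast})/t)$) fills in exactly the point the paper dismisses briefly.
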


Note that one can apply the limiting mollified uniqueness concept in the same
manner as earlier.

\begin{proof}
[Proof of Theorem \ref{Thm 7.1}]To obtain the result $w^{\varepsilon
}\rightarrow w$, we observe that if $h:=g^{\prime}$ is piecewise constant,
then $g\left(  y\right)  :=\int_{0}^{y}h\left(  s\right)  ds$ is Lipschitz
with $Lip\left(  g\right)  =\max_{i}\left\{  \left\vert c_{i}\right\vert
\right\}  $, and differentiable a.e by Rademacher's Theorem.

Note that the only subtlety is for Type (C) in which the minimizer of
$tL^{\varepsilon}\left(  \frac{x-y}{t}+g_{\varepsilon}\left(  y\right)
\right)  $ may be on one segment of $g$ for which the minimizer $tL\left(
\frac{x-y}{t}\right)  +g\left(  y\right)  $ is on the adjacent one. But this
is an issue that is of measure $0$ in $x$ for a given $t$.
\end{proof}

When restricting the values of the initial conditions to the break points of
$H$, we obtain the following more specific result.

\begin{corollary}
\label{Cor 7.2}If $L$is polygonal convex with break points $\left\{
c_{i}\right\}  _{i=1}^{N}$ and the range of $g$ is contained in $\left\{
c_{i}\right\}  $, then the solution $w\left(  x,t\right)  $ takes on values
only in $\left\{  c_{i}\right\}  .$
\end{corollary}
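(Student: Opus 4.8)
The plan is to read this off directly from Theorem~\ref{Thm 7.1} together with the explicit description of $L$ from Section 3. Theorem~\ref{Thm 7.1} gives, for fixed $t>0$ and a.e.\ $x$, a dichotomy for the greatest minimizer $y^{\ast}(x,t)$: either it occurs at a vertex of $L$, in which case $w(x,t)=g^{\prime}(y^{\ast}(x,t))$, or it occurs at a vertex of the (piecewise constant) initial datum $g^{\prime}$, in which case $w(x,t)$ equals $L^{\prime}$ evaluated along the relevant segment of $L$. So it suffices to check that in each case the resulting value lies in $\{c_i\}_{i=1}^{N}$.

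In the vertex-of-$L$ case this is immediate: the hypothesis that the range of $g^{\prime}$ is contained in $\{c_i\}$ forces $w(x,t)=g^{\prime}(y^{\ast}(x,t))\in\{c_i\}$. In the vertex-of-$g^{\prime}$ case, recall from Section 3 that $L$ is polygonal convex on $[m_1,m_{N+1}]$ with break points $m_1<\dots<m_{N+1}$ and that its successive linear segments have slopes exactly $c_1,\dots,c_N$; hence $L^{\prime}$, wherever it is single valued, takes values in $\{c_i\}$. The first-order condition for a Type~(B)-type minimizer on the flat part of $f(y;x,t)=tL((x-y)/t)$ reads $L^{\prime}((x-y^{\ast})/t)=g^{\prime}(y^{\ast})$, with common value some $c_k$; thus $w(x,t)=c_k\in\{c_i\}$ here as well. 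In other words, the two formulas of Theorem~\ref{Thm 7.1} are, respectively, a value of $g^{\prime}$ and a slope of $L$, and both of these lie in $\{c_i\}$ by construction.

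Finally I would account for the exceptional set. The Type~(C) points, where a vertex of $L$ coincides with a jump of $g^{\prime}$, are finite in number for each fixed $t$, and the map $x\mapsto y^{\ast}(x,t)$ is nondecreasing, hence has at most countably many discontinuities; removing these null sets, the dichotomy above applies at every remaining $x$ and yields $w(x,t)\in\{c_i\}$ for a.e.\ $x$. I expect this bookkeeping of the exceptional cases to be the only mildly delicate part. One point to confirm is that the endpoints $m_1,m_{N+1}$ of the finite domain of $L$ cannot contribute a "new" slope value: there the one-sided slope of $L$ is $+\infty$ (as already exploited in Lemma~\ref{Lem 4.3}), so no relevant minimizer produces a value of $L^{\prime}$ outside $\{c_i\}$. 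The core of the argument, however, is the elementary observation already recorded above; everything else is routine.
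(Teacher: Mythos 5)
Your proposal is correct and follows essentially the same route as the paper's own proof: invoke the dichotomy of Theorem~\ref{Thm 7.1} and observe that one branch yields a value of $g^{\prime}$ (in $\{c_i\}$ by hypothesis) while the other yields a slope of $L$ (in $\{c_i\}$ by construction of the Legendre transform). Your additional bookkeeping of the Type~(C) points, the discontinuities of $y^{\ast}$, and the infinite one-sided slopes at $m_1,m_{N+1}$ is consistent with, and slightly more explicit than, what the paper records.
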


\begin{proof}
[Proof of Corollary \ref{Cor 7.2}]The minimizers of $tL\left(  \frac{x-y}%
{t}\right)  +g\left(  y\right)  $ will consist of the vertices of $g$ and
$tL\left(  \frac{x-y}{t}\right)  $ exclusively. If $\hat{y}$ is a Type (A)
minimizer (i.e. vertex of $L$ but on the differentiable portion of $g$), then%
\begin{equation}
\partial_{x}\left\{  tL\left(  \frac{x-\hat{y}\left(  x,t\right)  }{t}\right)
+g\left(  \hat{y}\left(  x,t\right)  \right)  \right\}  =g^{\prime}\left(
\hat{y}\left(  x,t\right)  \right)
\end{equation}
as before. If it is of Type (B), i.e. $\hat{y}$ is at a vertex of $tL\left(
\frac{x-y}{t}\right)  $, then $\partial_{x}\left\{  ...\right\}  =L^{\prime
}\left(  \frac{x-y}{t}\right)  $. In both cases, $\partial_{x}\left\{
...\right\}  \in\left\{  c_{i}\right\}  $. Hence, this is an alternative proof
of \cite{HO}, p. 74.
\end{proof}

\section{Conclusions And Applications}

In this paper, we have shown a number of important extensions to classical
results. In the classical Lax-Oleinik theory, more restrictive assumptions
such as $C^{2}$ smoothness and \textit{uniform} convexity of the flux function
are required. In many of our results, we have proven rigorous theorems with
only a $C^{0}$, (non-strictly) convex flux function $H$. This is particularly
significant as it facilitates an understanding of the behavior introduced by sharp
corners, i.e. at points where the flux function fails to have a derivative in
the classical (non-weak) sense and is nowhere strictly convex.

In fact, when the assumptions mentioned above are relaxed, the uniqueness of
the minimizers does not, in general, persist. Indeed, there is the potential
to have the minimum achieved at an infinite, even uncountable number of
points. However, we have shown that this difficulty can be addressed by
considering the greatest of these minimizers $y^{\ast}\left(  x,t\right)  $,
or supremum in the case of an infinite number. We have shown that the solution
is described by $w\left(  x,t\right)  =g^{\prime}\left(  y^{\ast}\left(
x,t\right)  \right)  $, so we have effectively substituted the requirement for
uniqueness of the minimizer with the behavior of a specific, well-defined
element of the set of minimizers after analyzing the relative change of the
Hopf-Lax functional at each of these points. The results have immediate
application to conservation laws subject to stochastic processes. For example,
if the initial condition $g^{\prime}\left(  x\right)  $, is assumed to be
Brownian motion, then the solution at time $t>0$ is given by $w\left(
x,t\right)  =g^{\prime}\left(  y^{\ast}\left(  x,t\right)  \right)  \,$. In
the case of Brownian motion \cite{A, BE, SC} with fixed value $0$ at $x=0$,
one obtains that the mean and variance at $t$ are $0$ and $y^{\ast}\left(
x,t\right)  $, respectively.

For each $t>0$, we know that $y^{\ast}\left(  x,t\right)  $ is an increasing
function of $x$ from Theorem \ref{Thm 4.2}. Since the variance of Brownian motion also
increases as $\left\vert x\right\vert $ increases, we obtain the result that
the increase in variance persists for all time.

This is an example of the application of these results to random initial
conditions. The methodology can also provide a powerful computational tool.
Computing solutions of shocks from conservation laws is a complicated task
even when the initial data are regular. When one has random initial data, e.g.
Brownian motion (or even less regular randomness), the difficulties are compounded.

The results we have obtained suggest a computational method that amounts to
determining the minimum for the function $tL\left(  \frac{x-y}{t}\right)
+g\left(  y\right)  $. In this expression, the first term can be regarded as a
deterministic slope while the second is an integrated Brownian motion that can
easily be approximated by a discrete stochastic process. In this way one can
obtain the probabilistic features of the solution $w\left(  x,t\right)  $
without tracking and maintaining the shock statistics. In a future paper, we
plan to address in detail the application of these results to an array of
stochastic processes.

\bibliographystyle{plain}
\bibliography{mybib}

\end{document}